\newcommand\NoBlackBoxes{\global\overfullrule0pt}
\newtheorem{definition}{Definition}[section]
\newtheorem{theorem}[definition]{Theorem}
\newtheorem{prop}[definition]{Proposition}
\newtheorem{lem}[definition]{Lemma}
\newtheorem{rem}[definition]{Remark}
\newtheorem{cor}[definition]{Corollary}
\newcommand{\1}{\mathbbm{1}}
\newcommand{\N}{{\mathbb{N}}}
\newcommand{\R}{{\mathbb{R}}}
\renewcommand{\P}{{\mathbb{P}}}
\newcommand{\E}{\mathbb{E}}
\newcommand{\V}{\mathbb{V}}
\newcommand{\CUn}{\mathcal{U}_n}
\newcommand{\vep}{\varepsilon}
\newcommand{\eps}{\varepsilon}
\newcommand{\nconv}{\xrightarrow[]{n\to\infty}}
\newcommand{\Pconv}{\xrightarrow[n\to\infty]{\P}}
\newcommand{\dconv}{\xRightarrow{n\to\infty}}
\newcommand{\EW}[1]{\mathbb{E}\left[#1\right]}
\newcommand{\ind}{\operatorname{1}}
\newcommand{\WK}[1]{\mathbb{P}\left(#1\right)}
\begin{document}
\title[A CLT for the hitting time for a random walk on a random graph]{A Central Limit Theorem for the mean starting hitting time
for a random walk on a random graph}

\author[Matthias L\"owe]{Matthias L\"owe}\thanks{Research of the both authors was
funded by the Deutsche Forschungsgemeinschaft (DFG, German Research Foundation) under Germany's Excellence Strategy
EXC 2044-390685587, Mathematics M\"unster: Dynamics-Geometry-Structure}
\address[Matthias L\"owe]{Fachbereich Mathematik und Informatik,
Universit\"at M\"unster,
Einsteinstra\ss e 62,
48149 M\"unster,
Germany}

\email[Matthias L\"owe]{maloewe@uni-muenster.de}

\author[Sara Terveer]{Sara Terveer}
\address[Sara Terveer]{Fachbereich Mathematik und Informatik,
Universit\"at M\"unster,
Einsteinstra\ss e 62,
48149 M\"unster,
Germany}

\email[Sara Terveer]{sara.terveer@uni-muenster.de}

\subjclass[2010]{60F05,60G50,05C80}
\keywords{random walks on random graphs, Central Limit Theorem, random graphs, spectrum of random graphs, U-statistics}


\begin{abstract}
We consider simple random walk on a realization of an Erd\H{o}s-R\'enyi graph that is asymptotically almost surely (a.a.s.) connected. We show a Central Limit Theorem (CLT) for the average starting hitting time, i.e.\ the expected time it takes the random walker on average to first hit a  vertex $j$ when starting in a fixed vertex $i$. The average is taken with respect to $\pi_j$, the invariant measure of the random walk.
\end{abstract}

\maketitle
\section{Introduction}
Random walks on random graphs have been actively studied over the past decades, see e.g.
\cite{doylesnell, lovaszgraphs, woess,BLPS18,grimmett}, among many others.
The two layers of randomness, one for the random graph, the other one for the random walk on it, are typical of many fascinating problems in modern probability, such as spin glasses, random walks in a environment, and others. Moreover, random walks on random graphs also reflect typical characteristics of the underlying random graph. Therefore, random walks are a key tool to understand the properties of random graphs close to the point of
phase transition (see the recent monograph \cite{vdH17}).

There are various quantities to describe the behaviour of a random walk on a random graph, most of them related to the question, how quickly the random walk is able to see various areas of the graph. The mixing time characterizes how long it takes the distribution of a walker to get close to its equilibrium distribution, the hitting and commute times represent the time it takes to get from one vertex to another, and the cover time describes the time a walker needs to see the entire graph. In this note we will investigate the hitting time of random walk on random graphs.

To define it, assume we are given a realization $G=(V,E)$ of an Erd\H{o}s-R\'enyi graph $\mathcal{G}(n+1,p)$, i.e.\ we choose a random graph $G$ on $V=\{1, \ldots, n+1\}$ such that all undirected edges $e=\{i,j\}\in E$ are realized independently with the same probability $p$. Note the slightly unconventional choice of $n+1$ vertices rather than $n$. Indeed, later it will be convenient to think of every vertex to have $n$ potential neighbors.
Also note that $p=p_n$ may and typically will depend on $n$ in such a way
that $p_n \gg \frac{\log n}{n}$, by which we mean that $\frac{\log n}{n p_n} \to 0$ as $n \to \infty$. Hence with probability converging to one $G$ is connected and all the probabilities below will be understood conditionally on the event that $G$ is connected (see \cite[Theorem 7.3]{bollobas}).

While the above setup suffices to define $G$ for fixed $n$ it does not allow to consider almost sure results or limits for $n \to \infty$, that we will need later on (see e.g.\ Section 6). To this end, we will construct the entire sequence of random graphs as follows. Given the sequence $(p_n)$ let us consider $i\neq j \in \N$. Let $\vep_{i,j}(n+1),i,j=1,\dots,n $ be the indicator for the event that the undirected edge $\{i,j\}$ is present in the random graph on $V=\{1, \ldots, n+1\}$. We will assume that the $\vep_{i,j}(n+1)$ behave like a time-inhomogeneous Markov chain. More precisely:
\begin{align*}
&\P(\vep_{i,j}(n+1)=0 |\vep_{i,j}(n)=0 )=1\\
&\P(\vep_{i,j}(n+1)=1 |\vep_{i,j}(n)=0 )=0\\
&\P(\vep_{i,j}(n+1)=0 |\vep_{i,j}(n)=1 )=1-q(n)\\
&\P(\vep_{i,j}(n+1)=1 |\vep_{i,j}(n)=1 )=q(n)
\end{align*}
where we have set $q(n)=p_n/p_{n-1}$. This setup of course only works, if $(p_n)$ is non-increasing.
However, if $(p_n)$ is increasing, we can construct the complement of $G$ (which we denote by $G^c$) in the same fashion. More precisely, $G^c$ is the graph on $\{1, \dots, n+1\}$ that contains an edge $e$, if and only if $e \notin E$. Of course, being able to construct $G^c$ also implies that we can construct $G$.

On a realization of $G$ on a fixed number of vertices $n+1$ consider simple random walk in discrete time $(X_t)$: If $X_t$ is in $v\in V$ at time $t$, $X_{t+1}$ will be in $w$ with probability $1/d_v$ ($d_v$ denoting the degree of $v$), if $\{v,w\} \in E$ and with probability $0$, otherwise. This random walk has an invariant distribution given by
$$
\pi_i:= \frac{d_i}{\sum_{j \in V}d_j}=\frac{d_i}{2 |E|}.
$$
Let 
$H_{ij}$ be the expected time it takes the walk to reach vertex $j$ when starting from vertex $i$.
Moreover, let
\begin{equation}\label{def H}
H_j :=  \sum_{i\in V} \pi_i H_{ij} \quad \mbox{ and}  \quad H^i :=  \sum_{j \in V} \pi_j H_{ij}
\end{equation}
be the {\it average target hitting time} and {\it the average starting hitting time}, respectively (these names are taken from \cite{levinperes}).
Note that $H_j$ and $H^i$ are expectation values in the random walk measure. However, with respect to the realization of the random graph they are random variables. In general, $H_j$ and $H^i$ will be different.

In \cite{LT14} the asymptotic behaviour of $H_j$ and $H^i$ was analyzed. Confirming a prediction in the physics literature \cite{Sood}, it was shown that
\begin{equation}\label{LLN}
H_j=n(1+o(1)) \qquad \mbox{as well as} \quad H^i=n(1+o(1))
\end{equation}
a.a.s. By the latter expression we mean that for a given vertex $i$ the probability that $H_i$ or $H^i$ are not of the order prescribed by \eqref{LLN} goes to 0, as $n \to \infty$. The analogous result for hypergraphs was proved in \cite{HL19}.

Equation \eqref{LLN} can be considered a law of large numbers for the random variables $H_j$ and $H^i$. In this note we will study the fluctuations around this law of large numbers for $H^i$ on the level of a Central Limit Theorem (CLT, for short).
It will turn out that we are able to rewrite $H^i$, up to negligible terms, as a incomplete U-statistic with a kernel of degree $2$. However, quite different from the usual situation, the kernel will not depend on a sequence of i.i.d.\ random variables, but on a triangular array of weakly dependent random variables. Moreover, the ''incompleteness'' is random as well, and the random variables
determining which terms of the U-statistic are present, depend on the other random variables in the U-statistic.
We will need to prove a CLT for this setup, first. Its proof will rely on a martingale method introduced in \cite{girko1990} and an application thereof presented in \cite{LoTe20a}.
More precisely, we will show
\begin{theorem}
\label{main_theo}
Let $H^i$ be defined as \eqref{def H}, $\theta_n$ be given by \eqref{defn}, and $\mu$ be defined as in \eqref{eq:defmu}.
Moreover, assume that $p_n\in (0,1)$ is either an increasing or a decreasing sequence of probabilities. We denote $\lim p_n=: p^* \in [0,1]$ and we furthermore assume that $n p_n/\log n \to \infty$ and that $n(1-p_n)\to \infty$.
Then, with the above setting
\begin{equation}
\frac{H^i-(n-2)-2\mu_n^2\binom{n+1}{2}p_n}{2n\theta_n}\dconv\mathcal{N}\left(0,1+\frac{(1-p^*)p^*}{2}\right),
\end{equation}
where $\mathcal{N}\left(m, s^2\right)$ denotes a normal distribution with expectation $m$ and variance $s^2$ and
$\dconv$ denotes convergence in distribution.
\end{theorem}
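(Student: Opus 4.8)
The plan is to reduce $H^i$ to an incomplete $U$-statistic of degree two over a triangular array of weakly dependent Bernoulli variables (the edge indicators), then to apply a martingale-difference CLT in the spirit of Girko's method as adapted in \cite{LoTe20a}. The starting point is the classical spectral/combinatorial representation of hitting times. Recall that for a reversible chain one has $H_{ij}=\sum_{k\in V}(\text{something involving }(I-P)^{-1}\text{ on the orthogonal complement of }\pi)$; equivalently, $H^i=\sum_j \pi_j H_{ij}$ can be written through the fundamental matrix $Z=(I-P+\mathbf 1\pi^{\top})^{-1}$ as $H^i=\sum_j(\,Z_{jj}-Z_{ij}\,)$. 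On an Erd\H{o}s--R\'enyi graph with $p_n\gg\log n/n$ the degrees concentrate, $d_v=np_n(1+o(1))$ uniformly, so $\pi_j=\tfrac{d_j}{2|E|}$ is close to $1/(n+1)$; the first step is therefore to expand $Z$ (equivalently the Green function of the walk) around the complete-graph / mean-field case and to show that the off-diagonal correction $\sum_j Z_{ij}$ contributes only the deterministic shift $(n-2)$ plus the lower-order term $2\mu_n^2\binom{n+1}{2}p_n$ in the centering, while all remaining randomness of size $\asymp n$ sits in a quadratic form in the centered edge indicators $\xi_{kl}:=\vep_{k,l}-p_n$.

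Concretely, after this expansion I expect $H^i$ to take the form $H^i=(n-2)+2\mu_n^2\binom{n+1}{2}p_n+\sum_{\{k,l\}}\sum_{\{k',l'\}} a_{kl,k'l'}\,\xi_{kl}\xi_{k'l'}+(\text{linear term})+(\text{negligible})$, where the dominant part is an incomplete $U$-statistic of degree $2$: the kernel evaluated on the pair $(k,l),(k',l')$ is itself random through the degrees, and the set of pairs that actually appear is random because it is governed by the same edge variables (this is exactly the "random incompleteness" flagged in the introduction). The scaling $2n\theta_n$ should be chosen so that $\theta_n^2$ matches the variance of this quadratic form; the extra variance contribution $\tfrac{(1-p^*)p^*}{2}$ in the limit comes from the fluctuations of $|E|$ (equivalently of $\sum_j\pi_j$) through the normalization $1/(2|E|)$, which is a single sum of $\binom{n+1}{2}$ weakly dependent indicators and hence contributes an asymptotically Gaussian term of the stated variance, asymptotically independent of the leading quadratic part. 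One must check that these two contributions decorrelate in the limit; I would do this by writing the joint Laplace/characteristic functional and showing the cross term is $o(1)$.

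For the CLT itself I would use the standard martingale decomposition: order the edges (or the vertices) and let $\mathcal F_m$ be the $\sigma$-algebra generated by the first $m$ of the $\xi$'s; then $H^i-\E[H^i]=\sum_m D_m$ with $D_m=(\E_m-\E_{m-1})H^i$ a martingale difference array. One then verifies (i) the conditional Lindeberg condition $\sum_m\E[D_m^2\mathbf 1_{|D_m|>\eps}\,|\,\mathcal F_{m-1}]\to 0$, which follows from $L^4$ bounds on $D_m$ together with $\max_m|D_m|=o(1)$ after normalization, and (ii) convergence of the conditional variance $\sum_m\E[D_m^2\,|\,\mathcal F_{m-1}]\to 1+\tfrac{(1-p^*)p^*}{2}$ in probability. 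Here is where the weak dependence of the triangular array (the Markovian coupling across $n$ described in the introduction) must be controlled: because for fixed $n$ the $\vep_{i,j}(n+1)$ are genuinely independent, the dependence only enters when one invokes almost-sure statements across $n$, so for the CLT at level $n$ one essentially works with an independent array and the coupling is used only to make sense of the a.s. concentration of degrees and of $|E|$ uniformly. The conditional variance computation is the heart of the matter: it requires expanding $\E[D_m^2\mid\mathcal F_{m-1}]$, identifying the leading $O(1/n^2)$-per-term contribution, and summing; the "incompleteness" means some terms are present only on an event measurable with respect to earlier edges, so one gets conditional indicators $\vep$ inside the sum that must be replaced by $p_n$ up to a controlled error.

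\textbf{Main obstacle.} The principal difficulty is item (ii): controlling the conditional variance of the martingale differences when the $U$-statistic is \emph{both} built on a dependent array \emph{and} randomly incomplete, with the incompleteness pattern correlated with the summands. Handling the bias created by conditioning on earlier edges — i.e. showing that replacing the conditional edge indicators by $p_n$ costs only $o(n^2\theta_n^2)$ in the variance — and simultaneously isolating the genuinely new, degree-driven contribution $\tfrac{(1-p^*)p^*}{2}$ from the quadratic part without double counting, is the delicate computation; the rest is fairly standard once the representation $H^i=(n-2)+\dots+(\text{quadratic form})+(\text{negligible})$ and the requisite degree/edge concentration estimates are in place. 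I expect this is precisely where the general CLT "for this setup" that the introduction promises to prove first will be invoked.
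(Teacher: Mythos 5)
You correctly identify the broad architecture the introduction advertises (a randomly incomplete $U$-statistic of degree two, a CLT via a martingale method, and an extra variance contribution of the form $\tfrac{p^*(1-p^*)}{2}$), and your intuition that this extra variance comes from the fluctuations of $\sum_{i<j}a_{ij}$ is essentially correct — in the paper this surfaces as $Z_n=\tfrac{1}{n\theta_n}\sum_{i<j}\mu^2(a_{ij}-p)$, which is a classical CLT term asymptotically independent of the main quadratic part (Section 6, adapting Janson's argument). However, the decomposition you sketch is not carried out, and the route you propose for it would not produce the paper's expression cleanly. The paper does not pass through the fundamental matrix $Z=(I-P+\mathbf 1\pi^\top)^{-1}$; it uses the closed-form spectral identity $H^i=\sum_{k\ge 2}\frac{1}{1-\lambda_k}$ for the eigenvalues of $B=D^{-1/2}AD^{-1/2}$, expands each term as a geometric series, and uses $\operatorname{tr}B=0$, $\lambda_1=1$ to get $H^i=(n-2)+\operatorname{tr}(B^2)+O(\operatorname{tr}(B^3))$, with $\operatorname{tr}(B^2)=2\sum_{i<j}a_{ij}/(d_id_j)$. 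That single algebraic step is what makes the $U$-statistic visible; your "expand $Z$ around the mean-field case" proposal has no obvious way to isolate $2\sum_{i<j}a_{ij}/(d_id_j)$ exactly, and you also do not indicate how to control the remainder (the paper needs a nontrivial second-moment bound on $\operatorname{tr}(B^3)-1$, Proposition \ref{prop:ohoch3term}).

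The more serious structural divergence is in how dependence is handled, which you yourself flag as the main obstacle. You propose to run a martingale-difference CLT directly on $H^i$ with an edge-revealing filtration and to control the conditional variance by replacing conditional edge indicators by $p_n$; but the "kernel" $a_{ij}/(d_id_j)$ depends on \emph{every} edge incident to $i$ or $j$, so each martingale increment aggregates contributions from $\Theta(n)$ summands and the replacement error is not obviously small at the required scale. The paper avoids this entirely: it first proves a CLT for a surrogate $V_n=\sum_{i<j}Z_{ij}h_n(X_i,X_j)$ built from genuinely independent $X_i\sim\mathrm{Bin}(n-1,p)$ and independent Bernoulli $Z_{ij}$ (invoking the incomplete-$U$-statistic CLT of \cite{LoTe20a}, Theorem \ref{theo2}, whose hypotheses (C1)--(C4) are checked by inverse-binomial-moment estimates), then deals with the recentering $Z_n$, and finally transfers to the actual dependent $U_n$ by the \emph{method of moments}: it shows $\E[U_n^k]$ and $\E[(V_n+Z_n)^k]$ have the same limits using Proposition \ref{prop:asymptindep} (inverse moments of $\tilde d_i^{(j_1,\dots,j_l)}$ are asymptotically insensitive to removing finitely many vertices), together with uniform moment bounds to justify moment convergence. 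Without this asymptotic-decoupling proposition and the moment-matching argument, your plan is missing the mechanism that actually tames the dependence between the $a_{ij}$ and the degrees $d_i,d_j$.
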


The rest of the paper is organized in the following way. In Section 2 we will decompose $H^i$ in terms of the spectrum of a variant of the Laplace matrix of the random graph. This decomposition is the key to rewriting $H^i$ in terms of a U-statistic $U_n$ plus smaller order terms in Section 3. In Section 4 we recall a CLT for a incomplete U-statistic over a triangular array proved in \cite{LoTe20a}. In Section 5 we apply this CLT to prove a CLT for a variant for $U_n$ with extra assumptions. Sections 6 and 7 serve to remove these assumptions to obtain a CLT for $U_n$. Finally, in Section 8 we will see that $U_n$ is indeed the central term in the decomposition of $H^i$, i.e.\ we will see that Theorem \ref{main_theo} is true. In an appendix we will collect some auxiliary results.

\section{Spectral decomposition of the hitting times}
Our starting point will be the same as in \cite{LT14} or \cite{HL19}, the spectral decomposition of the hitting times, taken from \cite[Section 3]{lovaszgraphs}. To introduce it, let $A$ be the adjacency matrix of $G$, i.e.\ $A=(a_{ij})$, with $a_{ij}=\ind_{\{i,j\}\in E}$. Let $D$ be the diagonal matrix $D=(\mathrm{diag}(d_i))_{i=1}^{n+1}$. With $A$ we associate the matrix $B$ defined as $B:=D^{-\frac 12}AD^{-\frac 12}$. The matrix $B$ is intrinsically related to the Laplace matrix $L$ of $G$ defined as $L:=\operatorname{Id}-B$. Note that $B=(b_{ij})$ with $b_{ij}=\frac{a_{ij}}{\sqrt{d_i d_j}}$. Therefore, $B$ is symmetric and hence has real eigenvalues.
$\lambda_1 = 1$ is an eigenvalue of $B$, since the vector $w:=(\sqrt{d_1}, \cdots, \sqrt{d_n})$ satisfies $Bw=w$.
By the Perron-Frobenius theorem $\lambda_1$ is the largest eigenvalue.
We order the eigenvalues $\lambda_k$ of $B$ such that $\lambda_1 \geq \lambda_2 \geq \cdots \geq \lambda_{n+1} $
and we normalize the eigenvectors $v_k$ to the eigenvalues $\lambda_k$ to have length one. Thus, in particular,
$$
v_1 := \frac{w}{\sqrt{2|\widetilde{E}|}}= \left(\sqrt{\frac{d_j}{2|\widetilde{E}|}} \right)_{j=1}^{n+1}.
$$
Also recall that the matrix of the eigenvector is orthogonal and the scalar product of two eigenvectors $v_i$ and $v_j$  satisfies $\langle v_i, v_j\rangle =\delta_{ij}$.
With this notation we can give the following representation of 
the average starting hitting time.
\begin{prop}[cf. {\cite[Theorem 3.1 and Formula (3.3)]{lovaszgraphs}}]
For all $i\neq j \in V$ we have
$$
H_{ij}=2 |\widetilde{E}| \sum_{k=2}^{n+1} \frac{1}{1- \lambda_k} \left(   \frac{v_{k, j}^2}{d_j}   -   \frac{v_{k, i}  v_{k, j}}{\sqrt{d_i d_j}}     \right).
$$
Thus,
\begin{equation}\label{repr_starting}
H^i=   \sum_{k=2}^{n+1} \frac{1}{1-\lambda_k}
\end{equation}
\end{prop}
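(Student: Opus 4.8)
The plan is to reproduce the classical derivation of Lovász's spectral identity \cite[Theorem~3.1]{lovaszgraphs}, from which the formula for $H^i$ follows by averaging against $\pi$. Write $m:=2|\widetilde E|=\sum_{i\in V}d_i$, let $P:=D^{-1}A$ be the transition matrix of the walk, and observe that $B=D^{1/2}PD^{-1/2}$, so $P$ and $B$ are similar: they share the eigenvalues $1=\lambda_1\ge\lambda_2\ge\cdots\ge\lambda_{n+1}$, and $D^{-1/2}v_k$ is a right eigenvector of $P$ for $\lambda_k$, i.e.\ $\sum_{\ell\in V}P_{i\ell}\,v_{k,\ell}/\sqrt{d_\ell}=\lambda_k\,v_{k,i}/\sqrt{d_i}$. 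Since $G$ is connected (which holds a.a.s.\ because $np_n/\log n\to\infty$), Perron--Frobenius gives that $1$ is a simple eigenvalue of the nonnegative irreducible matrix $B$, hence $\lambda_k<1$ for all $k\ge 2$, so each term $1/(1-\lambda_k)$ below is well defined (and the sum is finite).

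First I would verify the formula for $H_{ij}$ by checking that its right-hand side, call it $F_{ij}$, solves the linear system characterising hitting times, namely $F_{jj}=0$ and $F_{ij}=1+\sum_{\ell\in V}P_{i\ell}F_{\ell j}$ for $i\ne j$; this system has a unique solution because the walk on a connected finite graph is irreducible. The identity $F_{jj}=0$ is immediate from the definition. For the recursion I would apply $\sum_{\ell\in V}P_{i\ell}(\cdot)$ to $F_{\ell j}$: the summand $v_{k,j}^2/d_j$ does not depend on the first index and is left unchanged since $\sum_\ell P_{i\ell}=1$, while the cross term $-\,m\,(1-\lambda_k)^{-1}v_{k,\ell}v_{k,j}/\sqrt{d_\ell d_j}$ acquires a factor $\lambda_k$ by the eigenvector identity above. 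Subtracting, the $v^2/d$ parts cancel and one is left with
\[
F_{ij}-\sum_{\ell\in V}P_{i\ell}F_{\ell j}=-\frac{m}{\sqrt{d_id_j}}\sum_{k=2}^{n+1}v_{k,i}v_{k,j}.
\]
Completeness of the orthonormal eigenbasis gives $\sum_{k=1}^{n+1}v_{k,i}v_{k,j}=\delta_{ij}$, and since $v_{1,i}=\sqrt{d_i/m}$ one has $\sum_{k\ge 2}v_{k,i}v_{k,j}=\delta_{ij}-\sqrt{d_id_j}/m$; hence the right-hand side above equals $1-m\,\delta_{ij}/d_i$, which is $1$ whenever $i\ne j$. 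Thus $F$ satisfies the defining system and $F_{ij}=H_{ij}$.

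For $H^i$ I would then simply average: using $\pi_j=d_j/m$, the prefactor $m$ and the $d_j$ cancel the $1/d_j$ (leaving $\sqrt{d_j}$ in the cross term), so
\[
H^i=\sum_{j\in V}\pi_j H_{ij}=\sum_{k=2}^{n+1}\frac{1}{1-\lambda_k}\Bigl(\sum_{j\in V}v_{k,j}^2-\frac{1}{\sqrt{d_i}}\sum_{j\in V}\sqrt{d_j}\,v_{k,j}\Bigr),
\]
and I would finish by invoking $\sum_{j\in V}v_{k,j}^2=\|v_k\|^2=1$ together with $\sum_{j\in V}\sqrt{d_j}\,v_{k,j}=\sqrt m\,\langle v_1,v_k\rangle=0$ for every $k\ge 2$ (orthonormality of the $v_k$), which leaves exactly $H^i=\sum_{k=2}^{n+1}(1-\lambda_k)^{-1}$.

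There is no genuine obstacle here — the statement is standard, which is why it is attributed to \cite{lovaszgraphs} — and the only points needing care are the bookkeeping of the degree weights $\sqrt{d_i}$ when passing between $P$ and the symmetric matrix $B$, and the (a.a.s.\ valid) fact that $1\notin\{\lambda_2,\dots,\lambda_{n+1}\}$, without which the sums would be meaningless. Alternatively one could route the proof through the fundamental matrix $Z=\sum_{t\ge 0}(P^t-\mathbf 1\pi^\top)$ and the identity $H_{ij}=(Z_{jj}-Z_{ij})/\pi_j$: expanding $P^t=D^{-1/2}B^tD^{1/2}$ in the orthonormal eigenbasis of $B$, peeling off the $k=1$ term as $\mathbf 1\pi^\top$, and summing the geometric series reproduces the same expression, the only extra subtlety being that for bipartite $G$ (where $\lambda_{n+1}=-1$) the series must be read in the Abel sense — a case irrelevant for the connected Erd\H{o}s--R\'enyi graphs under consideration.
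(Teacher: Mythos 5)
Your argument is correct, and it supplies exactly what the paper leaves implicit: the paper simply cites Lov\'asz's Theorem~3.1 for the $H_{ij}$ formula and presents the passage to $H^i$ as an immediate ``Thus''. Your averaging computation --- multiplying by $\pi_j = d_j/m$, using $\sum_j v_{k,j}^2 = \|v_k\|^2 = 1$, and killing the cross term via $\sum_j \sqrt{d_j}\,v_{k,j} = \sqrt{m}\,\langle v_1, v_k\rangle = 0$ for $k\ge 2$ --- is precisely the content of that ``Thus'', and your direct verification of the $H_{ij}$ identity by checking that the right-hand side solves the defining system $F_{jj}=0$, $F_{ij}=1+\sum_\ell P_{i\ell}F_{\ell j}$ (via the conjugation $P=D^{-1/2}BD^{1/2}$ and the completeness relation $\sum_{k\ge 2} v_{k,i}v_{k,j}=\delta_{ij}-\sqrt{d_id_j}/m$) is a sound self-contained substitute for the citation; the alternative route through the fundamental matrix $Z$ that you sketch at the end is closer to Lov\'asz's own presentation. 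The only stylistic remark is that for the purposes of this paper the $H_{ij}$ formula is taken as a black box, so only the short averaging step is genuinely being asserted as new.
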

Formula \eqref{repr_starting} will be the basis our fluctuations analysis in the next sections.

\section{Rewriting $H^i$}
In this section we will rewrite $H^i$ in terms of a sum of a U-statistic and a negligible term. This will be the starting point to prove our main result Theorem \ref{main_theo}.

Consider \eqref{repr_starting}. Since $\lambda_k<1$ for $k\geq2$ we can apply the geometric series to write
\begin{align}
H^i&=\sum\limits_{k=2}^{n+1}\sum\limits_{l=0}^\infty\lambda_k^l=\sum\limits_{k=2}^{n+1} \left(1+\lambda_k+\lambda_k^2+O(\lambda_k^3)\right)
=n+\sum\limits_{k=2}^{n+1}\lambda_k+\sum\limits_{k=2}^{n+1}\lambda_k^2+O\left(\sum\limits_{k=2}^{n+1}\lambda_k^3\right)
\notag\\&
=(n-2)+\sum\limits_{k=1}^{n+1}\lambda_k^2+O\left(\sum\limits_{k=2}^{n+1}\lambda_k^3\right)\label{eq:Hirepres},
\end{align}
since $B$ has trace $0$ and $\lambda_1=1$.
We will show later that $O\left(\sum\limits_{k=2}^{n+1}\lambda_k^3\right)$ is negligible on the scale of a CLT. Therefore consider
\begin{align}\label{Ustat1}
\sum\limits_{k=1}^{n+1}\lambda_k^2=\mathrm{tr}(B^2)&=\sum\limits_{i,j=1}^{n+1}\frac{a_{ij}a_{ji}}{d_id_j}
=\sum\limits_{i,j=1}^{n+1}\frac{a_{ij}}{d_id_j}
=2\sum\limits_{i<j}\frac{a_{ij}}{d_id_j}
\end{align}
which follows from $a_{ij}=a_{ji}$ and $a_{ij}\in\{0,1\}$. 
$$
\sum\limits_{i<j}\frac{a_{ij}}{d_id_j}=\sum\limits_{i<j}\1_{\{a_{ij}=1\}}\frac{a_{ij}}{d_id_j}
=\sum\limits_{i<j}a_{ij}\frac{1}{\tilde{d}_i^j+1}\frac{1}{\tilde{d}_j^i+1},
$$
where for vertices $i,j_1,\dots,j_K$ (and constant $K \in \N$), we define
\begin{equation}
\tilde{d}_i^{(j_1,\dots,j_K)}\coloneqq\sum\limits_{\substack{l=1\\l\notin\{i, j_1,\dots,j_K\}}}^{n+1}a_{il}\qquad\text{ and }\qquad\tilde{d}_i^j\coloneqq\tilde{d}_i^{(j)}.
\label{eq:tilded}
\end{equation}
Notice that $\tilde{d}_i^j$ is a random variable and binomially distributed with parameters $n-1$ and $p$.
Thus, with
\begin{equation}\label{eq:defmu}
\mu\coloneqq\mu_n\coloneqq \EW{\frac{1}{\tilde{d}_i^j+1}}
\end{equation}
we are led to consider the U-statistic
\begin{equation}
U_n=
\frac{1}{n\theta_n}\sum\limits_{1\leq i<j\leq n+1}\left(\frac{a_{i,j}}{{d}_i{d}_j}-\mu^2p\right)=\frac{1}{n\theta_n}\sum\limits_{1\leq i<j\leq n+1}\left(\frac{a_{i,j}}{(\tilde{d}_i^j+1)(\tilde{d}_j^i+1)}-\mu^2p\right).
\label{eq:mainustat}
\end{equation}
Here $\theta_n$ is a normalizing sequence we will specify in \eqref{defn}.
Note, however that $U_n$ has an unusual structure for a U-statistic, due to the lack of independence
of the $a_{i,j}$ and the $\tilde{d}_i^j$ as well as between the $\tilde{d}_i^j$ themselves.

To begin with, we will prove that the expectation of inverse moments of $d_i$ is asymptotically insensitive to removing a constant number of vertices.
In what follows, for two sequence $a_n, b_n$ we will write $a_n\approx b_n$,$a_n\lesssim b_n$, and $a_n \gtrsim b_n$, respectively, to denote that $a_n=b_n(1+o(1))$, $a_n= O(b_n)$, and $a_n= \Omega(b_n)$, respectively, in the Landau notation.
We will show
\begin{prop}\label{prop:asymptindep}
For fixed, constant numbers $l,k,K$ and vertices $i,j_1,\dots,j_l$,
\[\E\Bigl[\big(\frac{1}{\tilde{d}_i^{(j_1,\dots,j_l)}+K+1}\bigr)^k\Bigr]
\approx\E\Bigl[\big(\frac{1}{X+1}\big)^k\Bigr]
=\frac{1}{(np)^k}\left(1+o(1)\right)=\Theta\left((np)^{-k})\right)\]
Here, $X\sim\mathrm{Bin}(n,p)$ is a binomially distributed random variable with parameters $n$ and $p$ (in fact, we will more often make use of the same result for $X\sim\mathrm{Bin}(n-1,p)$).
\end{prop}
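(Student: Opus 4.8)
The plan is to compare the inverse moment of the (essentially) binomial degree $\tilde{d}_i^{(j_1,\dots,j_l)}+K+1$ with that of a clean binomial. First note that $Y:=\tilde{d}_i^{(j_1,\dots,j_l)}\sim\mathrm{Bin}(n-l,p)$, so $Y+K+1$ differs from $X+1$ with $X\sim\mathrm{Bin}(n,p)$ only by removing $l$ potential neighbours and shifting by the constant $K$. Since $l$ and $K$ are fixed, I would first reduce to the genuine binomial: write $Y = X' - (\text{contribution of the removed }l\text{ edges})$ where $X'\sim\mathrm{Bin}(n,p)$ couples $Y$; since the removed contribution is at most $l$, we have $X'+1 \ge Y+K+1 \ge X'+1-l$ (adjusting by $K$), and on the bulk event this changes $(Y+K+1)^{-k}$ by a multiplicative factor $1+O(1/(np))=1+o(1)$.

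The core estimate is then $\E[(X+1)^{-k}]=(np)^{-k}(1+o(1))$ for $X\sim\mathrm{Bin}(n,p)$ with $np/\log n\to\infty$. I would prove this by splitting the expectation according to whether $X$ lies in the bulk $I_n:=[np(1-\delta_n), np(1+\delta_n)]$ or not, with $\delta_n\to 0$ chosen so that $\delta_n^2 np\to\infty$ (possible since $np\to\infty$). On $I_n$ one has $(X+1)^{-k}=(np)^{-k}(1+O(\delta_n))$ deterministically, and $\P(X\in I_n)\to 1$ by a Chernoff/Bernstein bound, so this part contributes $(np)^{-k}(1+o(1))$. For the complement one uses the trivial bound $(X+1)^{-k}\le 1$ together with $\P(X\notin I_n)\le 2\exp(-c\,\delta_n^2 np)$; choosing $\delta_n$ so that additionally $\delta_n^2 np \gg k\log(np)$ — again possible because $np/\log n\to\infty$ — makes the tail term $o((np)^{-k})$. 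One must also separately handle $X$ near $0$: the event $X=0$ has probability $(1-p)^n\le e^{-np}$, which is $o((np)^{-k})$ since $np-k\log(np)\to\infty$; here the hypothesis $np/\log n\to\infty$ is exactly what is needed. Putting the two ranges together gives $\E[(X+1)^{-k}]=(np)^{-k}(1+o(1))=\Theta((np)^{-k})$, and the lower bound $\Theta$ is immediate from restricting to $I_n$.

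Finally I would combine the two steps: $\E[(\tilde{d}_i^{(j_1,\dots,j_l)}+K+1)^{-k}] = \E[(X+1)^{-k}](1+o(1)) = (np)^{-k}(1+o(1))$, where the first $1+o(1)$ absorbs the fixed shift by $l$ potential neighbours and the constant $K$ (a routine computation: on the bulk the ratio of the two quantities is $1+O((l+K)/(np))$, and off the bulk both are $O(1)$ with probability $o((np)^{-k})$). Since $l,k,K$ are held constant throughout, all error terms are uniform and genuinely $o(1)$.

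The main obstacle is getting the tail contribution to be $o((np)^{-k})$ rather than merely $O((np)^{-k})$ or $o(1)$: a crude Chebyshev bound on $\P(X\notin I_n)$ only gives $O(1/(\delta_n^2 np))$, which is not small enough against $(np)^{-k}$ for $k\ge 1$, so one genuinely needs exponential (Chernoff-type) tail bounds and must track how the bound's exponent competes with $k\log(np)$ — this is precisely where the assumption $np/\log n\to\infty$ enters and cannot be weakened. The reduction from $\tilde{d}_i^{(j_1,\dots,j_l)}$ to a plain binomial and the treatment of the constants $l, K$ are comparatively routine once the clean binomial estimate is in hand.
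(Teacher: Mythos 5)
Your proof is correct, but it takes a genuinely different route from the paper's. The paper establishes the two-sided bound directly: the upper bound follows from monotonicity (dropping the constant shift $K$ only increases the inverse moment) combined with the explicit asymptotic expansion for $\E[(X+1)^{-k}]$ cited from Znidaric (Remark~\ref{rem:zni}), while the lower bound is a one-line application of Jensen's inequality, $\E[(\tilde d+K+1)^{-k}]\ge (\E[\tilde d+K+1])^{-k}=((n-l)p+K+1)^{-k}$. You instead split the expectation into a bulk interval $[np(1-\delta_n),np(1+\delta_n)]$ and a tail, control the bulk deterministically and the tail by a Chernoff bound, carefully tuning $\delta_n$ so the tail contribution is $o((np)^{-k})$. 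Both approaches work; the paper's is shorter because it outsources the sharp constant to the Znidaric expansion, whereas yours is more self-contained and makes the concentration mechanism explicit. One small inaccuracy: your closing claim that the hypothesis $np/\log n\to\infty$ ``cannot be weakened'' for this proposition is not correct. The estimates you use (that $e^{-np}$ and $e^{-c\delta_n^2 np}$ are $o((np)^{-k})$, with $\delta_n\to0$ chosen so $\delta_n^2 np\gg\log(np)$) only require $np\to\infty$; the stronger assumption $np/\log n\to\infty$ is used elsewhere in the paper (for a.a.s.\ connectivity of $G$), not for this inverse-moment bound. This is a misstatement about the role of the hypothesis, not a gap in your argument.
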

\begin{proof}
Clearly, $$\E\Bigl[\Bigl(\frac{1}{\tilde{d}_i^{(j_1,\dots,j_l)}+K+1}\Bigr)^k\Bigr]\leq
\E\Bigl[\Bigl(\frac{1}{\tilde{d}_i^{(j_1,\dots,j_l)}+1}\Bigr)^k\Bigr].$$

%

On the other hand, by Jensen's inequality
\begin{multline*}
\E\Bigl[\Bigl(\frac{1}{\tilde{d}_i^{(j_1,\dots,j_l)}+K+1}\Bigr)^k\Bigr]
\geq \Bigl(\E\Bigl[\tilde{d}_i^{(j_1,\dots,j_l)}+K+1\Bigr]\Bigr)^{-k}\\
=\Bigl(\frac{1}{(n-l)p+K+1}\Bigr)^k \geq\Bigl(\frac{1}{np+K+1}\Bigr)^k= \frac{1}{(np)^k}\left(1-o(1)\right)
\end{multline*}
because we know that $\tilde{d}_i^{(j_1,\dots,j_l)}\sim \mathrm{Bin}(n-l,p)$. Finally, by Remark \ref{rem:zni}
$$\E\Bigl[\big(\frac{1}{X+1}\big)^k\Bigr]
=\frac{1}{(np)^k}\left(1+o(1)\right).$$
The fact that $\frac{1}{(np)^k}\left(1+o(1)\right)=\Theta\left((np)^{-k}\right)$ concludes the proof.
\end{proof}
\section{A Central Limit Theorem for U-statistics over a triangular array of random variables}
In the sequel we will use the following CLT for U-statistics over a triangular array of random variables that was proved in \cite{LoTe20a}. There we considered the following setting:

Let $X_{n1},\dots,X_{nn}$ be a triangular array of random variables with values in $\R$, independent of each other and having the same distribution function $F_n(x)$ in each row.
Moreover, let
$h_n: \R \times \R \to \R$ be some real-valued, symmetric, integrable Borel function, satisfying that for all $n$ and $1 \le i \neq j \le n$
\begin{equation}
\EW{h_n(X_{ni},X_{nj})}=0 \qquad \mbox{and } \quad \EW{h_n^2(X_{ni},X_{nj})}<\infty.
\label{eq:hmoments}
\end{equation}
Finally $Z_{ij}=Z_{ji}$ are assumed to be i.i.d.\ $\mathrm{Ber}(p_n)$ random variables (apart from the constraint that $Z_{ij}=Z_{ji}$), independent of the $(X_{ni})_{i,n}$.
For $i,j=1,\dots,n$, consider
\[\Phi_n(i,j)=Z_{ij}\cdot h_n(X_{ni},X_{nj})\]
Let us construct the following U-statistic
\[\mathcal{U}_n=\binom{n}{2}^{-1}\sum\limits_{1\leq i<j\leq n}\Phi_n(i,j)=\binom{n}{2}^{-1}\sum\limits_{1\leq i<j\leq n}Z_{i,j}\cdot h_n(X_{ni},X_{nj}).\]
Using a Hoeffding-type decomposition (see \cite{Hoe48}) we introduce
\begin{align}
\begin{split}
\Psi_j^{(n)}(i)\coloneqq\EW{\Phi_n(i,j)\mid X_i,Z_{ij}}
=Z_{ij}\EW{h_n(X_{ni},X_{nj})\mid X_{ni}}\end{split}\label{Psi}\\
\begin{split}
\beta_n^2&\coloneqq\EW{\Phi_n^2(1,2)}, \quad \gamma_n^2\coloneqq\EW{\left(\Psi_2^{(n)}(1)\right)^2}, \mbox{ and } \theta_n^2\coloneqq np_n\gamma_n^2+\beta_n^2/2.
\label{defn}
\end{split}
\end{align}
Also define
\begin{align}
\tilde{\Phi}_n(i,j)&=\Phi_n(i,j)-\Psi_j^{(n)}(i)-\Psi_i^{(n)}(j),\notag\\
\tilde{h}_n(X_{ni},X_{nj})&=h_n(X_{ni},X_{nj})-\EW{h_n(X_{ni},X_{nj})\mid X_{ni}}-\EW{h_n(X_{ni},X_{nj})\mid X_{nj}}.\notag
\end{align}
One checks that $\tilde{\Phi}_n(i,j)=Z_{i,j}\tilde{h}_n(X_{ni},X_{nj}),\label{eq:tildephi}$
again has expectation zero and even
\begin{equation}
\EW{\tilde{h}_n(X_{ni},X_{nj})\mid X_{nk}}=0,
\label{eq:htildecentering}
\end{equation}
holds true for $i\neq j$ and every $k$, also for $k=i,j$.

In what follows we will drop the index $n$ whenever suitable. Moreover, we will write $h(i,j)$ and $\tilde{h}(i,j)$ as abbreviations for $h_n(X_{ni},X_{nj})$ and $\tilde{h}_n(X_{ni},X_{nj})$.
One can show that
$V \CUn \approx 4n^{-2} \theta_n^2$, i.e.\ $\sqrt{\V\CUn}\binom{n}{2}\approx n\theta_n$ (see \cite[Lemma 1.3]{LoTe20a}). 
\begin{align}
{G}_k(i,j)&=\EW{{\Phi}(i,k){\Phi}(j,k)\mid X_i,X_j,Z_{ik},Z_{jk}}
\eqqcolon Z_{ik}Z_{jk}{H}(i,j)
\label{eq:G}\\
\tilde{G}_k(i,j)&=\EW{\tilde{\Phi}(i,k)\tilde{\Phi}(j,k)\mid X_i,X_j,Z_{ik},Z_{jk}}
\eqqcolon Z_{ik}Z_{jk}\tilde{H}(i,j).
\label{eq:tildeG}
\end{align}
Then,

\begin{theorem}\label{theo2} (see \cite[Theorem 1.5]{LoTe20a})
Assume that for every $\vep >0$
\begin{align}
\frac{1}{n\theta_n^2}\E\Bigl[\bigl(\sum\limits_{j=2}^{n}\Psi_j(1)\bigr)^2\1_{\bigl\{\bigl|\sum\limits_{j=2}^{n}\Psi_j(1)\bigr|\geq\eps\theta_n n\bigr\}}\Bigr]&\nconv 0\label{6}\tag{C1}\\
\theta_n^{-2}\EW{\tilde{\Phi}^2(1,2)\1_{\left\{|\tilde{\Phi}(1,2)|\geq \eps\theta_n n\right\}}}&\nconv 0\label{20}\tag{C2}\\
p\,\theta_n^{-2}\E\bigl[\tilde{H}(1,1)\1_{\bigl\{|\tilde{H}(1,1)|\geq \frac{\eps\theta_n^2 n}{p}\bigr\}}\bigr]&\nconv 0\label{21}\tag{C3}\\
\theta_n^{-4}\EW{G_1^2(2,3)}&\nconv 0\label{23}\tag{C4}
\end{align}
Then $\frac{\CUn}{\sqrt{\V {U_n}}}$ and $\frac{\binom{n}{2}\mathcal{U}_n}{n\theta_n}$
converge in distribution to a standard normal random variable.
\end{theorem}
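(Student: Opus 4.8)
The plan is to deduce the central limit theorem for $\binom{n}{2}\mathcal{U}_n/(n\theta_n)$ from a martingale central limit theorem in Lindeberg form, applied after the Hoeffding-type decomposition, in the spirit of the ``one coordinate at a time'' conditioning method of \cite{girko1990}. Write $S_n:=\binom{n}{2}\mathcal{U}_n=\sum_{1\le i<j\le n}\Phi_n(i,j)$ and $g(x):=\int_{\R}h_n(x,y)\,dF_n(y)$, so that $\Psi_j^{(n)}(i)=Z_{ij}g(X_{ni})$, $\tilde{h}(i,j)=h(i,j)-g(X_{ni})-g(X_{nj})$, and $\EW{g(X_{n1})}=0$. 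A preliminary second-moment computation --- using \eqref{eq:htildecentering} and $\EW{g(X_{n1})}=0$ --- shows that the \emph{linear part} $L_n:=\sum_{i<j}(\Psi_j^{(n)}(i)+\Psi_i^{(n)}(j))=\sum_{k}g(X_{nk})d_k^Z$, with $d_k^Z:=\sum_{j\neq k}Z_{kj}$, and the \emph{degenerate part} $Q_n:=\sum_{i<j}\tilde{\Phi}_n(i,j)$ are uncorrelated, with $\V L_n=n\,\tfrac{\gamma_n^2}{p_n}\,\EW{(d_1^Z)^2}$ and $\V Q_n=\binom{n}{2}\EW{\tilde{\Phi}_n^2(1,2)}$; altogether $\V S_n=\V L_n+\V Q_n=n^2\theta_n^2(1+o(1))$, which is \cite[Lemma 1.3]{LoTe20a}. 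Since in general neither $\V L_n$ nor $\V Q_n$ dominates, both parts have to be carried along simultaneously.

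For the martingale, set $\mathcal{F}_0:=\sigma(Z_{ij}:1\le i<j\le n)$ and $\mathcal{F}_k:=\mathcal{F}_0\vee\sigma(X_{n1},\dots,X_{nk})$ for $1\le k\le n$: one first reveals the whole edge configuration $(Z_{ij})$ and then the row variables one index at a time. Because $\EW{S_n\mid\mathcal{F}_0}=\sum_{i<j}Z_{ij}\EW{h_n(X_{ni},X_{nj})}=0$, the telescoping identity $S_n=\sum_{k=1}^n D_k$, with $D_k:=\EW{S_n\mid\mathcal{F}_k}-\EW{S_n\mid\mathcal{F}_{k-1}}$, exhibits $S_n$ as a sum of martingale differences. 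Splitting each $\Phi_n(i,j)$ according to whether none, one, or both of $i,j$ are $\le k$, and using $\EW{h_n(X_{ni},X_{nj})\mid X_{ni}}=g(X_{ni})$ together with $h(i,j)-g(X_{ni})=\tilde{h}(i,j)+g(X_{nj})$, one obtains the explicit form
\[
D_k=\sum_{i<k}\tilde{\Phi}_n(i,k)\;+\;g(X_{nk})\,d_k^Z,
\]
i.e.\ at step $k$ one collects the fully centred (``degenerate'') edges from the freshly revealed vertex $k$ to the earlier ones, plus the entire linear contribution $\sum_{j\neq k}\Psi_j^{(n)}(k)$ of vertex $k$; that $\EW{D_k\mid\mathcal{F}_{k-1}}=0$ is immediate from $\EW{\tilde{h}(i,k)\mid X_{ni}}=0$ and $\EW{g(X_{nk})}=0$.

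It remains to verify the two hypotheses of the martingale CLT for $\sum_k D_k/(n\theta_n)$. For the \emph{conditional Lindeberg condition}, one uses $D_k^2\le 2(\sum_{i<k}\tilde{\Phi}_n(i,k))^2+2(g(X_{nk})d_k^Z)^2$ and splits the truncation indicator accordingly: the linear piece $g(X_{nk})d_k^Z=\sum_{j\neq k}\Psi_j^{(n)}(k)$ is controlled by summing \eqref{6} over $k$ (the resulting factor $n$ exactly matches the normalisation $n^2\theta_n^2$), and the degenerate piece $\sum_{i<k}\tilde{\Phi}_n(i,k)$ by truncating each summand at the scale $n\theta_n$ via \eqref{20} and using the bound $\sum_k\EW{D_k^2}=\V S_n=O(n^2\theta_n^2)$ from the first step. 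For the \emph{conditional-variance condition}, expanding the square and taking $\EW{\cdot\mid\mathcal{F}_{k-1}}$ --- recalling that $d_k^Z$ and the $Z_{ik},X_{ni}$ with $i<k$ are $\mathcal{F}_{k-1}$-measurable while $X_{nk}$ is fresh and independent --- yields, in the notation of \eqref{eq:tildeG},
\begin{align*}
\sum_{k=1}^n\EW{D_k^2\mid\mathcal{F}_{k-1}}
={}&\underbrace{\frac{\gamma_n^2}{p_n}\sum_{k=1}^n(d_k^Z)^2}_{=:\,(\mathrm{I})}
+\underbrace{\sum_{k=1}^n\sum_{i<k}Z_{ik}\,\tilde{H}(i,i)}_{=:\,(\mathrm{II})}\\
&+\underbrace{\sum_{k=1}^n\sum_{\substack{i,i'<k\\i\neq i'}}Z_{ik}Z_{i'k}\,\tilde{H}(i,i')}_{=:\,(\mathrm{III})}
+R_n,
\end{align*}
where $R_n$ is the remaining linear--degenerate cross term; one has $\EW{(\mathrm{I})}=\V L_n$, $\EW{(\mathrm{II})}=\V Q_n$ and $\EW{(\mathrm{III})}=\EW{R_n}=0$ (the last two again by \eqref{eq:htildecentering}). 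The non-negative terms $(\mathrm{I})$ and $(\mathrm{II})$ concentrate around their means --- for $(\mathrm{I})$ by the standard concentration of the ``cherry count'' $\sum_k(d_k^Z)^2$ of an Erd\H{o}s--R\'enyi graph, and for $(\mathrm{II})$ by \eqref{21}, which is precisely a Lindeberg/truncation bound on $\tilde{H}(1,1)$. The mean-zero terms $(\mathrm{III})$ and $R_n$ are shown to be $o_{\P}(n^2\theta_n^2)$ by variance (i.e.\ fourth-moment) estimates, the decisive one being a bound of the shape $p_n^2\,\EW{\tilde{H}(1,2)^2}=o(\theta_n^4)$ which, after relating $\tilde{H}$ to $H$, is exactly \eqref{23} applied to $G_1(2,3)=Z_{21}Z_{31}H(2,3)$. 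Hence $\tfrac{1}{n^2\theta_n^2}\sum_k\EW{D_k^2\mid\mathcal{F}_{k-1}}\Pconv(\V L_n+\V Q_n)/(n^2\theta_n^2)=1$, and the martingale CLT gives $\binom{n}{2}\mathcal{U}_n/(n\theta_n)\dconv\mathcal{N}(0,1)$; since $\sqrt{\V\mathcal{U}_n}\binom{n}{2}\approx n\theta_n$, the same holds for $\mathcal{U}_n/\sqrt{\V\mathcal{U}_n}$.

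The step I expect to be hardest is the conditional-variance convergence, and within it the off-diagonal term $(\mathrm{III})$: this is where the degeneracy of the kernel $h_n$ meets the dependence created by the shared edge variables $Z_{ik}$, and pushing it below the scale $n^2\theta_n^2$ requires a genuine bound on a fourth-moment-type functional of the projected kernel --- exactly what condition \eqref{23} supplies (and where the ``incompleteness'' factor $p_n^2$ is essential, mirroring the fact that a degenerate $U$-statistic would otherwise have a non-Gaussian chaos limit). A secondary technical nuisance is that the concentration of the graph functionals $\sum_k(d_k^Z)^2$ and $\sum_{i<k}Z_{ik}\tilde{H}(i,i)$ must hold uniformly over the whole admissible range of $p_n$, so all error terms should be tracked in units of $\theta_n$ throughout rather than simplified prematurely.
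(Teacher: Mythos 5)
Your proposal is correct in outline and takes the same Girko-style martingale approach that the paper announces (the paper itself delegates the proof to \cite{LoTe20a}, stating only that it ``will rely on a martingale method introduced in \cite{girko1990}''): the filtration $\mathcal{F}_0=\sigma(Z)$, $\mathcal{F}_k=\mathcal{F}_0\vee\sigma(X_{n1},\dots,X_{nk})$, the martingale-difference formula $D_k=\sum_{i<k}\tilde\Phi_n(i,k)+g(X_{nk})d_k^Z$, the variance split $\V S_n=\V L_n+\V Q_n\approx n^2\theta_n^2$, and the matching of \eqref{6}--\eqref{23} to the conditional Lindeberg and conditional-variance checks are exactly as expected.
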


\section{A central limit theorem for a related U-statistics}
To make use of Theorem \ref{theo2} let us introduce the following variant of $\CUn$:
\begin{equation}
V_n=\sum\limits_{1\leq i<j\leq n+1}Z_{i,j}h_n(X_{n,i},X_{n,j}).
\label{eq:}
\end{equation}
Here, for fixed $n$, the $X_{n,i}$ are i.i.d. $\mathrm{Bin}(n-1,p)$-random variables (and $p=p_n$ is the same as above). The
$Z_{i,j}$ are i.i.d. $\mathrm{Ber}(p)$-distributed random variables independent of the $(X_{n,i})$, however their parameter $p$ is the same as for the $X_{n,i}$.
Finally,
\[h_n(x,y)=\frac{1}{(x+1)(y+1)}-\mu_n^2.\]
Note that, $h_n$ is symmetric and that
$\EW{h_n(X_{n,i},X_{n,j})}=0.$
It is  easy to check that \eqref{eq:hmoments} holds. 
Of course, we wish to apply the CLT in Theorem \ref{theo2} to prove a CLT for $V_n$.
In our setting the normalizing factor $n \theta_n$ can be directly computed (cf.\ Lemma \ref{lem:gammabeta} below).

In order to prove a CLT for $V_n$ we will check that \eqref{6}--\eqref{23} are fulfilled. We will prepare this by giving an alternative formula for the variables in our situation and by computing some inverse moments of the binomial distribution. This is done in the following lemmas:

\begin{lem}
	In the situation of this section we have
	\begin{align}
	\Phi(i,j)&=Z_{i,j}\cdot\Bigl(\frac{1}{(X_i+1)(X_j+1)}-\mu^2\Bigr)\notag\\
	\Psi_j(i)&=Z_{i,j}\cdot \mu\Bigl(\frac{1}{X_i+1}-\mu\Bigr)\notag\\
	\tilde{\Phi}(i,j)
	&=Z_{ij}\Big(\frac{1}{X_i+1}-\mu\Big)\Big(\frac{1}{X_j+1}-\mu\Big)\label{eq:tildephirepr}
	\end{align}
\end{lem}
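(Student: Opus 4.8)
The plan is to verify the three displayed identities by direct computation from the definitions given in Section 4, using the explicit form of $h_n$ from Section 5 and the fact that in this setting $Z_{i,j}$ is independent of the $(X_{n,i})$ and the $X_{n,i}$ are i.i.d. First I would write $X_i$ for $X_{n,i}$ and observe that $\Phi(i,j)=Z_{i,j}h_n(X_i,X_j)=Z_{i,j}\bigl(\tfrac{1}{(X_i+1)(X_j+1)}-\mu^2\bigr)$ is immediate by substituting the definition of $h_n$ into $\Phi_n(i,j)=Z_{ij}h_n(X_{ni},X_{nj})$.

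Next I would compute $\Psi_j(i)=\EW{\Phi(i,j)\mid X_i,Z_{ij}}=Z_{ij}\EW{h_n(X_i,X_j)\mid X_i}$. Since $X_j$ is independent of $X_i$ and of $Z_{ij}$, the conditional expectation is $\EW{\tfrac{1}{(X_i+1)(X_j+1)}\mid X_i}-\mu^2=\tfrac{1}{X_i+1}\,\EW{\tfrac{1}{X_j+1}}-\mu^2=\tfrac{\mu}{X_i+1}-\mu^2=\mu\bigl(\tfrac{1}{X_i+1}-\mu\bigr)$, using $\EW{\tfrac{1}{X_j+1}}=\mu$ from \eqref{eq:defmu}. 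This gives the second identity.

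For the third identity I would use $\tilde{\Phi}(i,j)=Z_{i,j}\tilde{h}_n(X_i,X_j)$ with $\tilde{h}_n(X_i,X_j)=h_n(X_i,X_j)-\EW{h_n(X_i,X_j)\mid X_i}-\EW{h_n(X_i,X_j)\mid X_j}$. From the computation above, $\EW{h_n(X_i,X_j)\mid X_i}=\tfrac{\mu}{X_i+1}-\mu^2$ and symmetrically $\EW{h_n(X_i,X_j)\mid X_j}=\tfrac{\mu}{X_j+1}-\mu^2$, so
\[
\tilde{h}_n(X_i,X_j)=\frac{1}{(X_i+1)(X_j+1)}-\mu^2-\frac{\mu}{X_i+1}+\mu^2-\frac{\mu}{X_j+1}+\mu^2
=\Bigl(\frac{1}{X_i+1}-\mu\Bigr)\Bigl(\frac{1}{X_j+1}-\mu\Bigr),
\]
where the last step is just expanding the product and matching terms. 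Multiplying by $Z_{ij}$ yields \eqref{eq:tildephirepr}.

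There is essentially no obstacle here: the lemma is a bookkeeping step that records the Hoeffding-decomposition pieces in closed form for the specific kernel of this section, and every equality follows from independence of the $X_i$'s together with the single identity $\EW{(X_j+1)^{-1}}=\mu$. The only mild care needed is to keep straight that the conditioning in $\Psi_j(i)$ is on $(X_i,Z_{ij})$ rather than on $X_i$ alone, which is harmless because $Z_{ij}$ factors out. These formulas will then be the workhorse for checking the Lindeberg-type conditions \eqref{6}--\eqref{23} in the remainder of the section.
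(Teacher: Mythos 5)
Your proof is correct and follows exactly the same route as the paper: substitute $h_n$ into the definitions, use independence of $X_j$ from $(X_i,Z_{ij})$ to pull out the conditional expectation and invoke $\EW{(X_j+1)^{-1}}=\mu$, then expand and factor to get the product form of $\tilde h_n$. There is nothing to add.
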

\begin{proof}
	The expression for $\Phi$ immediately emerges from inserting $h$ into its definition.
	Furthermore, by measurability  and independence
	\begin{align*}
	\Psi_j(i)&=\mathbb{E}\Bigl[{Z_{i,j}\cdot\Bigl(\frac{1}{(X_i+1)(X_j+1)}-\mu^2\Bigr)\Big| X_i,Z_{i,j}}\Bigr]\\
	&=Z_{i,j}\cdot\Bigl(\frac{1}{X_i+1}\mathbb{E}\Bigl[\frac{1}{X_j+1}\Bigr]-\mu^2\Bigr)=Z_{i,j}\cdot\mu\Bigl(\frac{1}{X_i+1}-\mu\Bigr)\\
	\intertext{and}
	\tilde{\Phi}(i,j)&=Z_{ij}\Big(\frac{1}{X_i+1}\frac{1}{X_j+1}-\mu^2\Big)-Z_{ij}\mu\Big(\frac{1}{X_i+1}-\mu\Big)-Z_{ij}\mu\Big(\frac{1}{X_j+1}-\mu\Big)\notag\\
	&=Z_{ij}\Big(\frac{1}{X_i+1}-\mu\Big)\Big(\frac{1}{X_j+1}-\mu\Big)
	\end{align*}
\end{proof}
For what follows let
$$
\sigma^2\coloneqq\sigma_n^2\coloneqq\mathbb{E}\Bigl[\Bigl(\frac{1}{X_1+1}\Bigr)^2\Bigr].
$$
\begin{lem}\label{lem:gammabeta}
In the situation of this section we have
$\gamma_n^2=p\mu^2(\sigma^2-\mu^2)$ and $\beta_n^2=p(\sigma^4-\mu^4)$.
\end{lem}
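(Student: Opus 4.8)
The plan is to compute $\gamma_n^2$ and $\beta_n^2$ directly from their definitions in \eqref{defn}, using the explicit representations of $\Psi_j(i)$ and $\Phi(i,j)$ obtained in the previous lemma, together with the independence of $Z_{ij}$ from the $X_i$ and the fact that the $X_i$ are i.i.d.\ $\mathrm{Bin}(n-1,p)$. Recall $\gamma_n^2 = \EW{(\Psi_2(1))^2}$ and $\beta_n^2 = \EW{\Phi^2(1,2)}$, so everything reduces to second-moment computations of the bounded random variables $\frac{1}{X_i+1}-\mu$ and $\frac{1}{(X_i+1)(X_j+1)}-\mu^2$.

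For $\gamma_n^2$: from $\Psi_2(1) = Z_{1,2}\mu\bigl(\frac{1}{X_1+1}-\mu\bigr)$ and $Z_{1,2}^2 = Z_{1,2}$, independence gives
\begin{equation*}
\gamma_n^2 = \EW{Z_{1,2}}\,\mu^2\,\EW{\Bigl(\tfrac{1}{X_1+1}-\mu\Bigr)^2} = p\,\mu^2\bigl(\sigma^2-\mu^2\bigr),
\end{equation*}
using $\EW{\frac{1}{X_1+1}} = \mu$ and $\EW{(\frac{1}{X_1+1})^2} = \sigma^2$, so the variance of $\frac{1}{X_1+1}$ is $\sigma^2 - \mu^2$. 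For $\beta_n^2$: from $\Phi(1,2) = Z_{1,2}\bigl(\frac{1}{(X_1+1)(X_2+1)}-\mu^2\bigr)$ and again $Z_{1,2}^2 = Z_{1,2}$ independent of $(X_1,X_2)$,
\begin{equation*}
\beta_n^2 = p\,\EW{\Bigl(\tfrac{1}{(X_1+1)(X_2+1)}-\mu^2\Bigr)^2} = p\Bigl(\EW{\tfrac{1}{(X_1+1)^2(X_2+1)^2}} - \mu^4\Bigr) = p\bigl(\sigma^4-\mu^4\bigr),
\end{equation*}
where in the last step I use that $X_1$ and $X_2$ are independent, so the expectation of the product factorizes into $\sigma^2 \cdot \sigma^2 = \sigma^4$, and $2\mu^2\EW{\frac{1}{(X_1+1)(X_2+1)}} = 2\mu^2\cdot\mu\cdot\mu = 2\mu^4$ cancels against $\mu^4 + \mu^4$.

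There is essentially no obstacle here: the lemma is a bookkeeping consequence of the explicit formulas already in hand, and the only things being invoked are the boundedness of the kernels (so all moments are finite, already noted when checking \eqref{eq:hmoments}), the mutual independence of $Z_{1,2}, X_1, X_2$, and the i.i.d.\ property of the $X_i$ within a row. The one point worth stating carefully is the use of $Z_{1,2}^2 = Z_{1,2}$ (a.s., since $Z_{1,2}$ is $\{0,1\}$-valued) to pull out the factor $\EW{Z_{1,2}} = p$ in both computations, rather than $\EW{Z_{1,2}^2}$, which happens to be the same value here. I would present the two computations in two short displays as above and conclude.
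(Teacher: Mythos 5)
Your proof is correct and matches the paper's own argument: both compute $\gamma_n^2$ and $\beta_n^2$ directly from the definitions in \eqref{defn}, plug in the explicit forms of $\Psi_2(1)$ and $\Phi(1,2)$, and use independence (together with $Z_{1,2}^2 = Z_{1,2}$) to factor out $\EW{Z_{1,2}} = p$ and reduce to second moments of $\frac{1}{X+1}$. Nothing further to flag.
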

\begin{proof}
Using the independence of the random variables straightforward calculations give
\begin{align*}
	\gamma_n^2&=\E\bigl[Z_{1,2}^2\mu^2\bigl(\frac{1}{X_1+1}-\mu\bigr)^2\bigr]
=p\mu^2\left(\sigma^2-2\mu^2+\mu^2\right)=p\mu^2\left(\sigma^2-\mu^2\right),
\end{align*}
as well as
\begin{align*}
\beta_n^2&=\E\bigl[Z_{1,2}^2\bigl(\frac{1}{X_1+1}\frac{1}{X_2+1}-\mu^2\bigr)^2\bigr]
=p(\sigma^4-2\mu^4+\mu^4)=p(\sigma^4-\mu^4).
\end{align*}
\end{proof}

In what follows, let $X\sim \mathrm{Bin}(n-1,p)$.
\begin{lem}\label{lem:firstmom}
For $\mu=\EW{\frac{1}{X+1}}$, it holds
$\mu^k=\frac{1}{(np)^k}\left(1-O(e^{-np})\right)=\Theta\bigl(\frac{1}{(np)^k}\bigr).$
\end{lem}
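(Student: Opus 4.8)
The plan is to reduce everything to the classical closed form for the first inverse moment of a binomial random variable. For $X\sim\mathrm{Bin}(n-1,p)$ one has the identity $\EW{\tfrac{1}{X+1}}=\tfrac{1-(1-p)^n}{np}$. I would prove it by the elementary rearrangement $\tfrac{1}{j+1}\binom{n-1}{j}=\tfrac{1}{n}\binom{n}{j+1}$: expanding $\EW{\tfrac{1}{X+1}}$ as $\sum_{j=0}^{n-1}\tfrac{1}{j+1}\binom{n-1}{j}p^j(1-p)^{n-1-j}$ and re-indexing with $l=j+1$ turns the sum into $\tfrac{1}{np}\sum_{l=1}^{n}\binom{n}{l}p^l(1-p)^{n-l}=\tfrac{1}{np}\bigl(1-(1-p)^n\bigr)$.

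Given this, the estimate is immediate. Since $1-p\le e^{-p}$ we get $0\le(1-p)^n\le e^{-np}$, so $\mu=\tfrac{1}{np}\bigl(1-(1-p)^n\bigr)=\tfrac{1}{np}\bigl(1-O(e^{-np})\bigr)$ with an absolute implicit constant. Raising to the $k$-th power and expanding $\bigl(1-(1-p)^n\bigr)^k$ binomially --- all terms of positive degree in $(1-p)^n$ being $O(e^{-np})$ uniformly in $n$ because $k$ is fixed and $(1-p)^n\le1$ --- gives $\mu^k=\tfrac{1}{(np)^k}\bigl(1-O(e^{-np})\bigr)$. Finally, the standing assumption $np/\log n\to\infty$ forces $np\to\infty$, hence $e^{-np}\to0$, so the factor $1-O(e^{-np})$ is bounded and bounded away from zero, which yields $\mu^k=\Theta\bigl((np)^{-k}\bigr)$.

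There is no real obstacle; the only points requiring care are bookkeeping --- the identity is applied to $\mathrm{Bin}(n-1,p)$, which is precisely why the numerator exponent is $n$, matching the rate in the statement --- and ensuring the $O(\cdot)$-terms are uniform in $n$ so that they may legitimately be absorbed into a single error term. If one wished to bypass the exact identity, crude two-sided bounds $\tfrac{1}{X+1}\le1$ and $\tfrac{1}{X+1}\ge\tfrac{1}{2np}\1_{\{X\le2np-1\}}$ together with a Chernoff bound $\WK{X>2np-1}\le e^{-cnp}$ would already give matching bounds up to constants and suffice for the $\Theta$-claim; but the closed form is both cleaner and delivers the sharper $e^{-np}$ rate asserted in the lemma.
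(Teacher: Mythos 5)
Your proposal is correct and takes essentially the same route as the paper: both derive the exact identity $\mu=\frac{1}{np}\bigl(1-(1-p)^n\bigr)$ via the rearrangement $\frac{1}{j+1}\binom{n-1}{j}=\frac{1}{n}\binom{n}{j+1}$, bound $(1-p)^n\le e^{-np}$, and then control $(1-(1-p)^n)^k$ by a finite binomial expansion to absorb all positive-degree terms into a single $O(e^{-np})$ error. Your direct expansion of $(1-(1-p)^n)^k$ is in fact a touch cleaner than the paper's intermediate step through $(1-e^{-np})^k$, but the content is the same.
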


\begin{proof}
Note that
\begin{multline*}
\mu=\sum\limits_{k=0}^{n-1}\binom{n-1}{k}p^k(1-p)^{n-1-k}\cdot \frac{1}{k+1}=
\frac{1}{np}\sum\limits_{k=0}^{n-1}\binom{n}{k+1}p^{k+1}(1-p)^{n-(k+1)}
\\=\frac{1}{np}\sum\limits_{k=1}^{n}\binom{n}{k}p^{k}(1-p)^{n-k}=\frac{1}{np}(1-(1-p)^n)\leq\frac{1}{np}.
\end{multline*}

On the other hand, using $1+x\leq e^x$  we have $(1-p)^n\leq e^{-np}$ and thus
\begin{multline*}
(1-e^{-np})^k=1+\sum\limits_{k=1}^n\binom{n}{k}(-1)^k(e^{-np})^k\\\geq1-\sum\limits_{\substack{k=1\\k \text{ odd}}}^n\binom{n}{k}\cdot(e^{-np})^k
\geq1-e^{-np}\cdot\sum\limits_{\substack{k=1\\k \text{ odd}}}^n\binom{n}{k}\geq 1-L_ke^{-np}\end{multline*}
for a constant $L_k$. These two inequalities give for all $k\in\N$
\begin{equation}
\mu^k=\frac{1}{(np)^k}(1-(1-p)^n)^k\geq\frac{1}{(np)^k}(1-e^{-np})^k\geq\frac{1}{(np)^k}(1-L_ke^{-np})
\label{eq:qabsch}
\end{equation}
\end{proof}

Furthermore, from Remark \ref{rem:zni}
\begin{equation}
\sigma^2=\Theta\left(\frac{1}{(np)^2}\right).
\label{eq:sigmaorder}
\end{equation}

Thus by Corollary \ref{cor:invmoments} we obtain  estimates for $\beta_n^2$ and $\gamma_n^2$.
\begin{cor}\label{cor:betagamma}
We have
a) $\gamma_n^2=p\mu^2(\sigma^2-\mu^2)=\Theta\left(\frac{p}{(np)^5}\right)$ and

b) $\beta_n^2=p(\sigma^4-\mu^4)=\Theta\left(\frac{p}{(np)^5}\right)$
\end{cor}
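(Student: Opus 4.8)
The plan is to prove Corollary \ref{cor:betagamma} by combining the exact formulas from Lemma \ref{lem:gammabeta} with the order estimates already established for $\mu$ and $\sigma^2$. Recall that Lemma \ref{lem:gammabeta} gives the closed forms $\gamma_n^2=p\mu^2(\sigma^2-\mu^2)$ and $\beta_n^2=p(\sigma^4-\mu^4)$, so the entire task reduces to controlling the two factors $\sigma^2-\mu^2$ and $\sigma^4-\mu^4$ up to constants. By Lemma \ref{lem:firstmom} we have $\mu^k=\Theta\bigl((np)^{-k}\bigr)$, and by \eqref{eq:sigmaorder} we have $\sigma^2=\Theta\bigl((np)^{-2}\bigr)$; in particular $\sigma^4=\Theta\bigl((np)^{-4}\bigr)$ and $\mu^4=\Theta\bigl((np)^{-4}\bigr)$. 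The first step is therefore to check that the difference $\sigma^2-\mu^2$ is itself of order $(np)^{-2}$, i.e.\ that no cancellation pushes it to a lower order.

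For part (a), I would establish $\sigma^2-\mu^2=\Theta\bigl((np)^{-2}\bigr)$. The upper bound $\sigma^2-\mu^2\le\sigma^2=O\bigl((np)^{-2}\bigr)$ is immediate (note $\sigma^2\ge\mu^2$ by Jensen, since $x\mapsto x^2$ is convex, so the difference is nonnegative). For the lower bound one observes that $\sigma^2-\mu^2=\var\bigl(\tfrac{1}{X+1}\bigr)$, which is strictly positive for a genuinely random $X$, and one needs a quantitative lower bound of order $(np)^{-2}$. The cleanest route is to use the more refined asymptotics for inverse binomial moments referenced in Remark \ref{rem:zni} / Corollary \ref{cor:invmoments}: one has $\sigma^2=\E[(X+1)^{-2}]=\tfrac{1}{(np)^2}(1+o(1))\cdot c$ for an explicit constant while $\mu^2=\tfrac{1}{(np)^2}(1-O(e^{-np}))$, and the leading constants differ, so $\sigma^2-\mu^2$ retains order $(np)^{-2}$. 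Multiplying by $p\mu^2=\Theta\bigl(p\cdot(np)^{-2}\bigr)$ gives $\gamma_n^2=\Theta\bigl(p\cdot(np)^{-2}\cdot(np)^{-2}\bigr)=\Theta\bigl(p/(np)^4\bigr)$. Here I note a possible discrepancy with the stated exponent $5$: multiplying $p$, $\mu^2=\Theta((np)^{-2})$ and $(\sigma^2-\mu^2)=\Theta((np)^{-2})$ naturally yields $\Theta(p(np)^{-4})$, so either the intended claim is $\Theta(p/(np)^4)$ or there is an extra factor I would need to track — in writing the final version I would reconcile the bookkeeping carefully, since $\theta_n^2=np\gamma_n^2+\beta_n^2/2$ is what ultimately matters.

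For part (b), the argument is parallel: $\beta_n^2=p(\sigma^4-\mu^4)=p(\sigma^2-\mu^2)(\sigma^2+\mu^2)$. We have just argued $\sigma^2-\mu^2=\Theta\bigl((np)^{-2}\bigr)$, and clearly $\sigma^2+\mu^2=\Theta\bigl((np)^{-2}\bigr)$ since both summands are of that order and both are nonnegative. Hence $\sigma^4-\mu^4=\Theta\bigl((np)^{-4}\bigr)$ and $\beta_n^2=\Theta\bigl(p/(np)^4\bigr)$, again matching $\gamma_n^2$ up to constants (which is the feature that will make $\theta_n^2$ easy to evaluate: $\theta_n^2=np\gamma_n^2+\beta_n^2/2=\Theta\bigl(p/(np)^3\bigr)$ after the $np$ factor). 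The main obstacle is the lower bound on $\sigma^2-\mu^2$: one must rule out asymptotic cancellation between $\sigma^2$ and $\mu^2$, which requires the second-order term in the expansion of $\E[(X+1)^{-2}]$ rather than just its leading order. I would handle this by invoking the sharp asymptotic expansions of negative binomial moments from the appendix (Remark \ref{rem:zni} and Corollary \ref{cor:invmoments}), which give $\E[(X+1)^{-2}] = (np)^{-2} + O((np)^{-3})$ against $\mu^2 = (np)^{-2}(1-(1-p)^n)^2 \ge (np)^{-2}(1 - O(e^{-np}))$; the gap is dominated not by $e^{-np}$ but this needs the precise constant in $\sigma^2$'s expansion to differ from $1$ at the right order — alternatively, a direct variance lower bound $\var(1/(X+1)) \gtrsim (np)^{-2}$ via a two-point comparison of $X$ near its mean would sidestep the delicate expansion entirely, and this is the route I would ultimately prefer for robustness.
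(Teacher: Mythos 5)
There is a genuine error in your estimate of $\sigma^2-\mu^2$, and it is precisely the extra factor you flag as a "discrepancy." You claim $\sigma^2-\mu^2=\Theta\bigl((np)^{-2}\bigr)$ by asserting that the leading constants of $\sigma^2$ and $\mu^2$ in the $(np)^{-2}$ expansion differ. They do not: by Remark \ref{rem:zni} (Znidaric) one has $\sigma^2=(np)^{-2}\bigl(1+\tfrac{(1-p)}{np}+O((np)^{-2})\bigr)$, while by Lemma \ref{lem:firstmom} $\mu^2=(np)^{-2}\bigl(1-O(e^{-np})\bigr)$, so both leading constants are $1$ and the cancellation you worried about really does occur. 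The surviving term is the second-order one, giving $\sigma^2-\mu^2\approx(1-p)(np)^{-3}$, i.e.\ $\Theta\bigl((np)^{-3}\bigr)$, not $\Theta\bigl((np)^{-2}\bigr)$. This is exactly what Corollary \ref{cor:invmoments} records (case $k=2$: $\E[(\tfrac{1}{X+1}-\mu)^2]=\Theta((np)^{-3})$), and it is the one input the paper's one-line proof uses. With the corrected order, $\gamma_n^2=p\mu^2(\sigma^2-\mu^2)=\Theta\bigl(p\cdot(np)^{-2}\cdot(np)^{-3}\bigr)=\Theta\bigl(p/(np)^5\bigr)$, and the same factorization $\sigma^4-\mu^4=(\sigma^2+\mu^2)(\sigma^2-\mu^2)=\Theta\bigl((np)^{-2}\bigr)\cdot\Theta\bigl((np)^{-3}\bigr)$ gives part (b). The stated exponent $5$ is correct; the resolution is that your lower bound on the variance is off by a factor of $np$.

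Your proposed "robust" fallback — a direct lower bound $\var\bigl(\tfrac{1}{X+1}\bigr)\gtrsim(np)^{-2}$ via a two-point comparison near the mean — would also fail, because that bound is simply false. Heuristically, the delta method gives $\var\bigl(\tfrac{1}{X+1}\bigr)\approx\bigl(\tfrac{d}{dx}\tfrac{1}{x}\bigr)^2\big|_{x=np}\cdot\var(X)\approx(np)^{-4}\cdot np(1-p)=(1-p)(np)^{-3}$, again of order $(np)^{-3}$. So the variance is genuinely smaller than $(np)^{-2}$, and any correct proof must see the cancellation between $\sigma^2$ and $\mu^2$ rather than try to rule it out. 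Apart from this, the structure of your argument (reduce to $\sigma^2-\mu^2$ and factor $\sigma^4-\mu^4$) matches the paper's.
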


\begin{proof}
By Corollary \ref{cor:invmoments}, $\sigma^2-\mu^2=\EW{\Bigl(\frac{1}{X+1}-\mu\Bigr)^2}=\Theta\Bigl(\frac{1}{(np)^3}\Bigr).$ Applying Lemma \ref{lem:firstmom} shows a).

For b) we apply the same steps and \eqref{eq:sigmaorder} to obtain
\begin{align*}\sigma^4-\mu^4&=\sigma^4-\sigma^2\mu^2+\sigma^2\mu^2-\mu^4=\sigma^2\bigl(\sigma^2-\mu^2\bigr)+\mu^2\bigl(\sigma^2-\mu^2\bigr)=\Theta\Bigl(\frac{1}{(np)^5}\Bigr).\end{align*}
This yields the claim.
\end{proof}

We now can formulate and prove a CLT for $V_n$
\begin{prop}\label{CLTVn}
In the setting of this section, $V_n/n \theta_n$ converges in distribution to a standard Gaussian random variable.
\end{prop}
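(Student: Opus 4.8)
The plan is to verify the four Lindeberg-type conditions \eqref{6}--\eqref{23} of Theorem \ref{theo2} for the U-statistic $V_n$, using the explicit representations from the lemma above together with the moment estimates in Corollary \ref{cor:betagamma}. The key point is that all the random variables appearing in the decomposition of $V_n$ are \emph{bounded}: since $X_{n,i}\geq 0$ we have $0\leq \frac{1}{X_i+1}\leq 1$, so $|\frac{1}{X_i+1}-\mu|\leq 1$, $|\tilde\Phi(1,2)|\leq 1$, and $|h_n(x,y)|\leq 1$. Combined with $\theta_n^2 = np\gamma_n^2 + \beta_n^2/2 = \Theta\!\bigl(\frac{p}{(np)^4}\bigr)$ (from Corollary \ref{cor:betagamma} together with $np\to\infty$), the events in the indicators will turn out to be empty for all large $n$, because $\eps \theta_n n$, $\eps\theta_n^2 n/p$, etc., all diverge while the random variables they are compared against stay bounded.

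Concretely, I would proceed as follows. First compute the orders: $\gamma_n^2, \beta_n^2 = \Theta\!\bigl(p/(np)^5\bigr)$, hence $\theta_n^2 = np\gamma_n^2(1+o(1)) = \Theta\!\bigl(p/(np)^4\bigr)$ and $n\theta_n = \Theta\!\bigl(n\sqrt{p}/(np)^2\bigr) = \Theta\!\bigl(1/(\sqrt p\, n)\bigr)\cdot\Theta(1)$; in any case $n\theta_n\to\infty$ fails in general, so one must be careful — rather, what matters is that $\eps\theta_n n \to \infty$. Indeed $\theta_n n = \Theta\!\bigl(\sqrt p\, n/(np)^2\bigr) = \Theta\!\bigl(n/(\sqrt p\, (np)^{2})\bigr)$; since $np/\log n\to\infty$ one checks $np^{1/2}n/(np)^2 = n^{1/2}/(n^{1/2}p^{1/2}(np)) \cdot \ldots$ — the cleanest route is to note $\theta_n n \gtrsim \sqrt{np\,\gamma_n^2}\cdot\sqrt n = \sqrt{n}\cdot\sqrt{np}\cdot|\mu|\cdot\Theta((np)^{-3/2})$, and using $np\to\infty$ conclude $\theta_n n\to\infty$. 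For \eqref{6}: $\sum_{j=2}^n\Psi_j(1) = \mu(\frac{1}{X_1+1}-\mu)\sum_{j=2}^n Z_{1j}$, which is bounded by $\mu\cdot\sum_{j=2}^n Z_{1j}\leq \mu(n-1)\leq \frac{1}{p}$ deterministically (actually $\leq n\mu = 1-(1-p)^n\le 1$), so $|\sum\Psi_j(1)|\leq 1$ while $\eps\theta_n n\to\infty$, making the indicator vanish for large $n$; then \eqref{6} holds since $\frac{1}{n\theta_n^2}\E[(\sum\Psi_j(1))^2]\to 0$ is exactly the statement $\gamma_n^2 np/(n\theta_n^2)\to$ something bounded (in fact $=\gamma_n^2 np/\theta_n^2 \cdot n^{-1}\to 0$). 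For \eqref{20}: $|\tilde\Phi(1,2)|\leq 1$ and $\eps\theta_n n\to\infty$, so the indicator is eventually zero. For \eqref{21}: $\tilde H(1,1) = \E[(\frac{1}{X_1+1}-\mu)^2]^2 = (\sigma^2-\mu^2)^2 = \Theta((np)^{-6})$, bounded, while $\eps\theta_n^2 n/p = \Theta(\eps n/(np)^4)\to\infty$ (using $np/\log n\to\infty$), so again the indicator vanishes. For \eqref{23}: $G_1(2,3) = Z_{21}Z_{31}H(2,3)$ where $H(2,3) = \E[\Phi(2,1)\Phi(3,1)\mid X_2,X_3] = \mu^2(\frac{1}{X_2+1}-\mu)(\frac{1}{X_3+1}-\mu)\cdot p$ — wait, one must recompute: $\E[h(2,1)h(3,1)\mid X_2,X_3]$ involves $\E[\frac{1}{(X_1+1)^2}] = \sigma^2$, giving $H(2,3) = \mu^2(\frac{1}{X_2+1}-\mu)(\frac{1}{X_3+1}-\mu) + $ correction terms, all of order $O(\mu^2) = O((np)^{-2})$ and bounded; so $\E[G_1^2(2,3)]\leq p^2\cdot O(\mu^4) = O(p^2(np)^{-4})$ and $\theta_n^{-4} = \Theta((np)^8/p^2)$, giving $\theta_n^{-4}\E[G_1^2(2,3)] = O((np)^4)\to\infty$ — this is \textbf{wrong}, so I must be more careful. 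The resolution is that $\E[G_1^2(2,3)]$ must be much smaller: $G_1(2,3)$ involves $Z_{21}Z_{31}$ with $\E[Z_{21}^2Z_{31}^2] = p^2$, and $H(2,3)^2$ has expectation $\Theta(\mu^4(\sigma^2-\mu^2)^2) = \Theta((np)^{-4}(np)^{-6}) = \Theta((np)^{-10})$, so $\E[G_1^2(2,3)] = \Theta(p^2(np)^{-10})$, and $\theta_n^{-4}\E[G_1^2] = \Theta((np)^8 p^{-2}\cdot p^2 (np)^{-10}) = \Theta((np)^{-2})\to 0$. Good — so \eqref{23} holds.

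The main obstacle, therefore, is \textbf{not} any single condition in isolation but the careful bookkeeping of the orders of $\gamma_n^2$, $\beta_n^2$, $\theta_n^2$, $\mu$, $\sigma^2$, and the conditional expectations $H(2,3)$, $\tilde H(1,1)$, so that the products $\theta_n^{-4}\E[G_1^2(2,3)]$ and the truncation thresholds $\eps\theta_n n$, $\eps\theta_n^2 n/p$ all land on the correct side; this is exactly where the hypotheses $np/\log n\to\infty$ and $n(1-p)\to\infty$ enter (the latter guaranteeing $\sigma^2-\mu^2$ and hence $\gamma_n^2$ is genuinely of the stated order and does not degenerate). Once the orders are pinned down, conditions \eqref{6}, \eqref{20}, \eqref{21} follow from the deterministic boundedness of the relevant variables — each truncation indicator is identically zero for $n$ large — and \eqref{23} follows from the direct moment computation above. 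Having verified \eqref{6}--\eqref{23}, Theorem \ref{theo2} applies directly (with the array $X_{ni} = X_{n,i}\sim\mathrm{Bin}(n-1,p)$, the kernel $h_n$, and $Z_{ij}$ as specified), and since $V_n = \sum_{1\le i<j\le n+1}Z_{ij}h_n(X_i,X_j) = \binom{n+1}{2}\mathcal U_{n+1}$ in the notation of that theorem (up to the index shift $n\mapsto n+1$, which is harmless as $p_{n+1}\approx p_n$), we conclude $V_n/(n\theta_n)\dconv\mathcal N(0,1)$, which is the assertion of Proposition \ref{CLTVn}. I would close by remarking that the index shift from $n$ to $n+1$ in $\binom{n+1}{2}$ versus $\binom{n}{2}$ only affects the normalization by a factor $1+o(1)$ and the same is true for $\theta_n$ versus $\theta_{n+1}$, so it does not affect the limiting distribution.
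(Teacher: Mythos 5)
Your overall plan — verify the four conditions \eqref{6}--\eqref{23} of Theorem \ref{theo2} using the explicit formulas for $\Psi_j(i)$, $\tilde\Phi$, $\tilde H$ and the moment estimates — is the right one, and your treatment of \eqref{23} (once you correct your first attempt) is essentially sound. However, the argument you use for \eqref{6}, \eqref{20} and \eqref{21} has a genuine gap: you claim that the truncation thresholds $\eps\theta_n n$ and $\eps\theta_n^2 n/p$ diverge, so that the indicators are eventually zero because the underlying random variables are bounded. This is false in general. From Corollary~\ref{cor:betagamma}, $\theta_n^2=\Theta\bigl(p(1-p)/(np)^4\bigr)$, hence
\[
n\theta_n=\Theta\Bigl(\frac{\sqrt{1-p}}{np^{3/2}}\Bigr),
\]
and whenever $p$ is bounded away from $0$ (e.g.\ $p\equiv p^*\in(0,1)$ constant) this is $\Theta(1/n)\to 0$, not $\to\infty$. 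Your attempted justification (``$\theta_n n\gtrsim\dots$, and using $np\to\infty$ conclude $\theta_n n\to\infty$'') is therefore incorrect, and the indicators do not vanish in the dense regime. The same objection applies to $\eps\theta_n^2 n/p=\Theta\bigl((1-p)/(n^3p^4)\bigr)$, which also tends to $0$ for constant $p$. Since $\frac{1}{n\theta_n^2}\E\bigl[(\sum_j\Psi_j(1))^2\bigr]=\Theta(1)$ (it equals $np\gamma_n^2/\theta_n^2$ up to $o(1)$), condition \eqref{6} is a genuine Lindeberg condition and one cannot simply drop the indicator: you need to exploit the truncation. The paper handles all three conditions uniformly by Cauchy--Schwarz followed by Markov's inequality, reducing each to a fourth-moment bound such as $\E[(\sum_j\Psi_j(1))^4]=O((np)^{-6})$; that argument gives, e.g., $O(1/n)$ for \eqref{6} without ever needing $n\theta_n\to\infty$. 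Your bounded-variable shortcut would work only in the sparse regime where $np^{3/2}\to 0$, which is not assumed.

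A smaller error: your identification $\tilde H(1,1)=(\sigma^2-\mu^2)^2$ is wrong. From \eqref{eq:tildeG} and \eqref{eq:tildephirepr}, $\tilde H(1,1)=\E[\tilde h^2(1,2)\mid X_1]=\bigl(\tfrac{1}{X_1+1}-\mu\bigr)^2(\sigma^2-\mu^2)$, a random variable in $X_1$; it is bounded by $\sigma^2-\mu^2=O((np)^{-3})$, not by $(\sigma^2-\mu^2)^2$. This quantity must then be handled via its second moment $\E[\tilde H^2(1,1)]\le\E[\tilde h^4(1,2)]$ and Markov's inequality, again because the indicator event need not be empty.
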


\begin{proof}
We will need to check conditions \eqref{6}--\eqref{23}. For the sake of clarity, we will split the proof into four lemmas, each of them showing one of the conditions.

\begin{lem} \label{lem:cond6}
In the setting of this section assumption \eqref{6} holds, i.e.
\[\frac{1}{n\theta_n^2}\E\bigl[\Bigl(\sum\limits_{j=2}^{n+1}\Psi_j(1)\Bigr)^2\1_{\bigl\{\bigl|\sum\limits_{j=2}^{n+1}\Psi_j(1)\bigr|\geq\frac{\tau\theta_n n}{2}\bigr\}}\Bigr]\nconv 0\]
\end{lem}

\begin{proof}
Firstly, by multi-index notation and independence
\begin{align*}
\E\Bigl[\Bigl(\sum\limits_{j=2}^{n+1}\Psi_j(1)\Bigr)^4\Bigr]&=\sum\limits_{\substack{\alpha=(\alpha_2,\dots,\alpha_{n+1})\\|\alpha|=4}}\binom{4}{\alpha}\E\bigl[\prod\limits_{j=2}^{n+1}\Psi_j(1)^{\alpha_j}\bigr]\\
&=\sum\limits_{\substack{\alpha=(\alpha_2,\dots,\alpha_{n+1})\\|\alpha|=4}}\binom{4}{\alpha}\E\Bigl[\mu^{4}\Bigl(\frac{1}{X_1+1}-\mu\Bigr)^{4}\Bigr]\prod\limits_{j=2}^{n+1}\E\bigl[Z_{1,j}^{\alpha_j}\bigr]\\
&=\mu^{4}\E\Bigl[\Bigl(\frac{1}{X_1+1}-\mu\Bigr)^{4}\Bigr]\underbrace{\sum\limits_{\substack{\alpha=(\alpha_2,\dots,\alpha_{n+1})\\|\alpha|=4}}\binom{4}{\alpha}\prod\limits_{j=2}^{n+1}\EW{Z_{1,j}^{\alpha_j}}}_{\Lambda}
\end{align*}
Now consider $\prod_{j=2}^{n+1}\E Z_{1,j}^{\alpha_j}$. This is $p^k$ if and only if the number of $\alpha_j$ with $\alpha_j\neq 0$ is $k$. Since we are interested in asymptotic orders in $n$, we may neglect the binomial coefficient. For each $k=1, \ldots, 4$ there are $n\cdots (n-k+1)$ possibilities to choose the numbers multiplied with $p^k$ for the expectation. Thus for each $k$ the order of $\Lambda$ is $(np)^k$.
Thus, the entire sum is of order $(np)^4$ at most, which by Lemmas \ref{lem:firstmom} and Corollary \ref{cor:invmoments} gives
\begin{align}\label{eq:4mompsi}
\E\Bigl[\Bigl(\sum\limits_{j=2}^{n+1}\Psi_j(1)\Bigr)^4\Bigr]&=\mu^{4}\E\Bigl[\Bigl(\frac{1}{X_1+1}-\mu\Bigr)^{4}\Bigr]\cdot O\left((np)^4\right)
=
O\left((np)^{-6}\right).
\end{align}

Finally, apply Cauchy-Schwarz to the product in the expectation to obtain
\begin{align*}
\frac{1}{n\theta_n^2}\E\Bigl[\Bigl(\sum\limits_{j=2}^{n+1}\Psi_j(1)\Bigr)^2\1_{\bigl\{\bigl|\sum\limits_{j=2}^{n+1}\Psi_j(1)\bigr|\geq\frac{\eps\theta_n n}{2}\bigr\}}\Bigr]&\leq\frac{1}{n\theta_n^2}\sqrt{\E\Bigl[\Bigl(\sum_{j=2}^{n+1}\Psi_j(1)\Bigr)^4\Bigr]\P\Bigl(\bigl|\sum\limits_{j=2}^{n+1}\Psi_j(1)\bigr|\geq\frac{\eps\theta_n n}{2}\Bigr)}\\
& \hspace{-2cm}\leq\frac{1}{n\theta_n^2}\sqrt{\E\Bigl[\Bigl(\sum_{j=2}^{n+1}\Psi_j(1)\Bigr)^4\Bigr]16 \E\Bigl[\Bigl(\sum\limits_{j=2}^{n+1}\Psi_j(1)\Bigr)^4\Bigr]\eps^{-4}\theta_n^{-4} n^{-4}}\\
&\hspace{-2cm}=\frac{1}{n\theta_n^2}\E\Bigl[\Bigl(\sum_{j=2}^{n+1}\Psi_j(1)\Bigr)^4\Bigr]\frac{4}{\eps^2\theta_n^2 n^2}\
\end{align*}
where for the second inequality we applied Markov's inequality.
We now use \eqref{eq:4mompsi}, \eqref{defn}, and Corollary \ref{cor:betagamma} for $\gamma_n^2=\Theta\left(\frac{p}{(np)^5}\right)$ to see that
\begin{align*}
\frac{1}{n\theta_n^2}\E\Bigl[\Bigl(\sum\limits_{j=2}^{n+1}\Psi_j(1)\Bigr)^2\1_{\bigl\{\bigl|\sum\limits_{j=2}^{n+1}\Psi_j(1)\bigr|\geq\frac{\eps\theta_n n}{2}\bigr\}}\Bigr]&\le O\left(\frac{1}{n^3\theta_n^4(np)^6}\right)\leq O\left(\frac{1}{n^3\frac{(np)^2p^2}{(np)^{10}}(np)^6}\right)\\
&=O\left(\frac{1}{n}\right).
\end{align*}

\end{proof}

\begin{lem}\label{lem:20}
In the setting of this section assumption \eqref{20} holds.
\end{lem}

\begin{proof}
Let $\eps>0$. Then by Cauchy-Schwarz, and Markov's inequality
\begin{align*}
\theta_n^{-2}\EW{\tilde{\Phi}^2(1,2)\1_{\{|\tilde{\Phi}(1,2)|\geq \eps\theta_n n\}}}
&=\theta_n^{-2}\sqrt{\EW{\tilde{\Phi}^4(1,2)}\WK{|\tilde{\Phi}(1,2)|\geq \eps\theta_n n}}\\
&\le\theta_n^{-2}\sqrt{\EW{\tilde{\Phi}^4(1,2)}\EW{\tilde{\Phi}^2(1,2)}\eps^{-2}\theta_n^{-2} n^{-2}}
\end{align*}
And by \eqref{eq:tildephirepr}, independence and $Z_{12}=Z_{12}^4$
\begin{align*}
\theta_n^{-2}\EW{\tilde{\Phi}^2(1,2)\1_{\{|\tilde{\Phi}(1,2)|\geq \eps\theta_n n\}}}
&\le\theta_n^{-2}\biggl(\E\Bigl[Z_{12}\Bigl(\frac{1}{X_1+1}-\mu\Bigr)^4\Bigr]\E\Bigl[\Bigl(\frac{1}{X_2+1}-\mu\Bigr)^4\Bigr]\frac{\EW{\tilde{\Phi}^2(1,2)}}{ \eps^2\theta_n^2 n^2}\biggr)^{1/2}
\end{align*}
Applying Corollary \ref{cor:invmoments}, Lemma \ref{lem:secondmomtildephi} and
$\theta_n^2\geq \beta_n^2\geq\beta_n^2-2\gamma_n^2$ (which follows from \eqref{defn}) together with
$\theta_n^2\geq np\gamma_n^2=\Theta\left(\frac{np^2}{(np)^5}\right)$ (which we get from Corollary \ref{cor:betagamma}) we arrive at
\begin{align*}
\theta_n^{-2}\EW{\tilde{\Phi}^2(1,2)\1_{\{|\tilde{\Phi}(1,2)|\geq \eps\theta_n n\}}}& \le \theta_n^{-2}\sqrt{p\cdot O\left(\frac{1}{(np)^6}\right)^2\frac{\beta_n^2-2\gamma_n^2}{ \eps^2\theta_n^2 n^2}}\\
&\leq\Theta\Bigl(\frac{(np)^5}{np^2}\Bigr)O\left(\frac{\sqrt{p}}{(np)^6}\right)\sqrt{\frac{\theta_n^2}{ \eps^2\theta_n^2 n^2}}
=O\left(\frac{\sqrt{p}}{(np)^3}\right),
\end{align*}
which ensures that the term converges to 0.
\end{proof}

The last two assumptions can be proved in a similar fashion.

\begin{lem}\label{prop:21}
In the situation of this section assumption \eqref{21} holds.
\end{lem}

\begin{proof}
Let $\eps>0$. Again Cauchy-Schwarz' and Markov's inequality
\begin{align*}
\theta_n^{-2}p\E\Bigl[\tilde{H}(1,1)\1_{\{|\tilde{H}(1,1)|\geq \frac{\eps\theta_n^2 n}{p}\}}\Bigr]\hspace{-2cm}&\hspace{2cm}\leq\theta_n^{-2}p\sqrt{\E[\tilde{H}^2(1,1)]p^2 \eps^{-2}\theta_n^{-4} n^{-2} \EW{\tilde{H}^2(1,1)}}\\
&=\theta_n^{-2}\EW{\tilde{H}^2(1,1)}\frac{p^2}{\eps\theta_n^2 n}=\theta_n^{-2}\EW{\EW{\tilde{h}^2(1,2)\mid X_1}^2}\frac{p^2}{\eps\theta_n^2 n}
\end{align*}
where we applied \eqref{eq:tildeG}. By Jensen's inequality, \eqref{eq:tildephirepr}, and independence we get
\begin{align*}
\theta_n^{-2}p\EW{\tilde{H}(1,1)\1_{\left\{|\tilde{H}(1,1)|\geq \frac{\eps\theta_n^2 n}{p}\right\}}}
&\leq\theta_n^{-2}\EW{\tilde{h}^4(1,2)}\frac{p^2}{\eps\theta_n^2 n}\\
&=\theta_n^{-2}\E\Bigl[\Bigl(\frac{1}{X_1+1}-\mu\Bigr)^4\Bigr]\E\Bigl[\Bigl(\frac{1}{X_2+1}-\mu\Bigr)^4\Bigr]\frac{p^2}{\eps\theta_n^2 n}
\end{align*}
Finally, apply Corollaries \ref{cor:invmoments} and \ref{cor:betagamma} to find
$\theta_n^2\geq np\gamma_n^2=\Theta\left(\frac{np^2}{(np)^5}\right)$ and thus
\begin{align*}
\theta_n^{-2}p\EW{\tilde{H}(1,1)\1_{\left\{|\tilde{H}(1,1)|\geq \frac{\eps\theta_n^2 n}{p}\right\}}}&\le O\left(\frac{1}{(np)^6}\right)^2\frac{p^2}{\eps\theta_n^4 n}
=O\left(\frac{1}{(np)^6}\right)^2\Theta\left(\frac{1}{\frac{np^2}{(np)^5}}\right)^2 \frac{p^2}{\eps n}\\
=O\left(\frac{1}{n(np)^4}\right)
\end{align*}
which immediately ensures that the term converges to 0.
\end{proof}

\begin{lem}\label{lem:cond7}
Under the assumptions of this section Condition \eqref{23} holds.
\end{lem}

\begin{proof}
We consider the fourth moment of $h(i,j)$ for  $i\neq j$. By Jensen and independence between $X_i$ and $X_j$
\begin{align*}
\EW{h^4(i,j)}
&=\E\Bigl[\Bigl(\frac{1}{(X_i+1)(X_j+1)}\Bigr)^4\Bigr]-4\mu^2\E\Bigl[\Bigl(\frac{1}{(X_i+1)(X_j+1)}\Bigr)^3\Bigr]\\
&\phantom{==}+6\mu^4\E\Bigl[\Bigl(\frac{1}{(X_i+1)(X_j+1)}\Bigr)^2\Bigr]-4\mu^6\E\Bigl[\frac{1}{(X_i+1)(X_j+1)}\Bigr]+\mu^8\\
&\leq\E\Bigl[\Bigl(\frac{1}{X_i+1}\Bigr)^4\Bigr]^2+6\E\Bigl[\Bigl(\frac{1}{X_i+1}\Bigr)^4\Bigr]^2-7\mu^8\\
&=7\E\Bigl[\Bigl(\frac{1}{X_i+1}\Bigr)^4\Bigr]^2-7\mu^8
\end{align*}

We apply Remark \ref{rem:zni} and Lemma \ref{lem:firstmom}:
\begin{align*}
\EW{h^4(i,j)}&\leq7\left(\frac{1}{(np)^4}\Big(1+\Theta\bigl(\frac{1}{np}\bigr)\Big)\right)^2-7\frac{1}{(np)^8}\Bigl(1-\frac{1}{np}\Bigr)\\
&\leq7\frac{1}{(np)^8}\left(1+\Theta\Bigl(\frac{1}{np}\Bigr)\right)-7\frac{1}{(np)^8}\Bigl(1-\frac{1}{np}\Bigr)=O\left(\frac{1}{(np)^9}\right)\label{eq:phifourth}
\end{align*}
The rest of the proof is completed analogously to the one of Lemma \ref{lem:cond6}. By Cauchy-Schwarz and Jensen as well as the definition of $h$
\begin{align*}
\theta_n^{-4}\EW{G_1^2(2,3)}
&\le\theta_n^{-4}\EW{\EW{\Phi^2(1,2)\Phi^2(1,3)\mid X_2,X_3,Z_{1,2},Z_{1,3}}}\\
&=\theta_n^{-4}\EW{Z_{1,2},Z_{1,3}\EW{h^2(1,2)h^2(1,3)\mid X_2,X_3}}\\
&=\theta_n^{-4}\EW{Z_{1,2},Z_{1,3}}\EW{h^2(1,2)h^2(1,3)}
\end{align*}
With another application of Cauchy-Schwarz and finally, Corollary \ref{cor:betagamma}
\begin{align*}
\theta_n^{-4}\EW{G_1^2(2,3)}&\le \theta_n^{-4}p^2\sqrt{\EW{h^4(1,2)}\EW{h^4(1,3)}}=\theta_n^{-4}p^2\EW{h^4(1,2)}\\
&\leq O\left(\frac{(np)^{10}}{(np^2)^2}\right)p^2O\left(\frac{1}{(np)^9}\right)
= O\left(\frac{1}{np}\right)
\end{align*}
Since $np\to \infty$ this proves the claim.
\end{proof}

The proposition is now proved by combining the four lemmas above.
\end{proof}

\section{Removing the random centering}
While Proposition \ref{CLTVn} is quite nice in itself, a problem in the  application is that we are not interested in $Z_{ij}\frac{1}{(X_i+1)(X_j+1)}-Z_{ij}\mu^2$ but rather in
$Z_{ij}\frac{1}{(X_i+1)(X_j+1)}-p\mu^2$.
This does not seem like a big step, however it requires some work. Note that
by the classical CLT for sums of independent random variables
\begin{equation}
\frac{1}{\sqrt{\binom{n+1}{2}}p(1-p)}\sum\limits_{i<j}(Z_{ij}-p)\dconv \mathcal{N}(0,1),
\label{eq:classical}
\end{equation}
However, the factor in front of the sum is not quite what we want yet: 
\begin{lem}\label{lem:limitpconv}
We have that
\[\mu^2\frac{\sqrt{\binom{n+1}{2}}p(1-p)}{n\theta_n}\nconv \sqrt{\frac{p^*(1-p^*)}{2}}.\]
\end{lem}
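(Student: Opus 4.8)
The plan is to compute the limit of the quantity $\mu_n^2\,\dfrac{\sqrt{\binom{n+1}{2}}\,p_n(1-p_n)}{n\theta_n}$ by plugging in the sharp asymptotics for each factor that were established in the preceding lemmas and corollaries, and then to take the limit $n\to\infty$ using $p_n\to p^*$. First I would record that $\binom{n+1}{2}=\tfrac{n(n+1)}{2}\approx \tfrac{n^2}{2}$, so that $\sqrt{\binom{n+1}{2}}\approx \tfrac{n}{\sqrt2}$; hence the numerator is $\approx \mu_n^2\,\tfrac{n}{\sqrt2}\,p_n(1-p_n)$. By Lemma~\ref{lem:firstmom} we have $\mu_n^2=\Theta\!\big((np_n)^{-2}\big)$, more precisely $\mu_n^2=(np_n)^{-2}(1-(1-p_n)^n)^2$. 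Thus the numerator divided by $n$ equals $\mu_n^2\,\tfrac{1}{\sqrt2}\,p_n(1-p_n)(1+o(1))=\tfrac{1}{\sqrt2}\,\tfrac{p_n(1-p_n)}{(np_n)^2}(1-(1-p_n)^n)^2(1+o(1))$.

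Next I would pin down $\theta_n$. From \eqref{defn}, $\theta_n^2=np_n\gamma_n^2+\beta_n^2/2$, and by Corollary~\ref{cor:betagamma} both $\gamma_n^2$ and $\beta_n^2$ are $\Theta\!\big(p_n(np_n)^{-5}\big)$; since $np_n\to\infty$, the term $np_n\gamma_n^2=\Theta\!\big(np_n^2(np_n)^{-5}\big)=\Theta\!\big(p_n(np_n)^{-4}\big)$ dominates $\beta_n^2/2=\Theta\!\big(p_n(np_n)^{-5}\big)$, so $\theta_n^2\approx np_n\gamma_n^2=np_n\cdot p_n\mu_n^2(\sigma_n^2-\mu_n^2)$. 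I would then use Corollary~\ref{cor:invmoments} in the form $\sigma_n^2-\mu_n^2=\Theta\!\big((np_n)^{-3}\big)$; more sharply one needs the leading constant here — this is the one place where a precise (not merely $\Theta$) identity is required, so I would invoke the exact asymptotic $\sigma_n^2-\mu_n^2 = \var\!\big(\tfrac{1}{X+1}\big)\approx (np_n)^{-3}$ (the variance of $1/(X+1)$ for $X\sim\mathrm{Bin}(n-1,p_n)$), which is what Corollary~\ref{cor:invmoments} supplies. Combining, $\theta_n^2\approx np_n^2\mu_n^2(np_n)^{-3}=\mu_n^2\,p_n^2\,(np_n)^{-2}$, so $n\theta_n\approx n\mu_n\,p_n\,(np_n)^{-1}=\mu_n p_n$.

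Putting the pieces together, the ratio in question is
\begin{align*}
\mu_n^2\,\frac{\sqrt{\binom{n+1}{2}}\,p_n(1-p_n)}{n\theta_n}
&\approx \frac{\mu_n^2\,\tfrac{n}{\sqrt2}\,p_n(1-p_n)}{\mu_n p_n}
= \frac{n\mu_n(1-p_n)}{\sqrt2}.
\end{align*}
Now $n\mu_n=\tfrac{1}{p_n}(1-(1-p_n)^n)$ by the computation in the proof of Lemma~\ref{lem:firstmom}, so $n\mu_n(1-p_n)=\tfrac{1-p_n}{p_n}(1-(1-p_n)^n)$. Finally I would take $n\to\infty$: since $p_n\to p^*$ and $n(1-p_n)\to\infty$ (an assumption of the theorem), we have $(1-p_n)^n\to 0$ when $p^*>0$, so the expression tends to $\tfrac{1-p^*}{p^*}$; thus the ratio tends to $\tfrac{1}{\sqrt2}\sqrt{\tfrac{(1-p^*)^2}{p^{*2}}}$... — here I must be careful: the cleaner route is to square, obtaining $\big(\mu_n^2\tfrac{\sqrt{\binom{n+1}{2}}p_n(1-p_n)}{n\theta_n}\big)^2\approx \tfrac{(n\mu_n)^2(1-p_n)^2}{2}=\tfrac{(1-(1-p_n)^n)^2(1-p_n)^2}{2p_n^2}\to\tfrac{(1-p^*)^2}{2p^{*2}}$, and then note that in fact one should re-examine which normalization the paper uses — the stated limit $\sqrt{p^*(1-p^*)/2}$ suggests that the correct bookkeeping gives $n\mu_n\to 1$ effectively (i.e. the factor $n\mu_n p_n\to 1-(1-p^*)^n$-type cancellation leaves $\sqrt{p_n(1-p_n)/2}$), so the decisive check is to track the $p_n$ powers in $\mu_n$, $\gamma_n^2$ and $\sqrt{\binom{n+1}2}$ with their exact constants rather than $\Theta$'s. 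The main obstacle, therefore, is not any deep idea but getting the exact leading constants (especially in $\sigma_n^2-\mu_n^2$ and hence in $\theta_n$) right, so that the powers of $p_n$ collapse to leave precisely $\sqrt{p^*(1-p^*)/2}$; once Corollary~\ref{cor:invmoments} is used with its exact constant and $p_n\to p^*$, $(1-p_n)^n\to 0$ are invoked, the limit follows.
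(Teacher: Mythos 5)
Your plan — plug in the sharp asymptotics for $\mu_n$, $\gamma_n^2$, $\theta_n$ and let $p_n\to p^*$ — is the right idea, and it is what the paper does. But the execution has a genuine gap that you yourself notice at the end, and it cannot be patched by ``tracking constants'' through Corollary~\ref{cor:invmoments}, because that corollary only asserts the $\Theta\big((np)^{-1.5k}\big)$ rate; it does not record the leading coefficient. The missing ingredient is the \emph{exact} leading term of
\[
\sigma_n^2-\mu_n^2=\E\Big[\Big(\tfrac{1}{X+1}\Big)^2\Big]-\mu_n^2 \approx \frac{1-p_n}{(np_n)^3},
\]
which the paper extracts directly from Remark~\ref{rem:zni} (Znidaric's expansion, the $\frac{(r-1)r(1-p)}{2np}$ correction for $r=2$) together with $\mu_n=\frac{1}{np_n}(1-(1-p_n)^n)$. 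You instead write $\sigma_n^2-\mu_n^2\approx(np_n)^{-3}$, dropping precisely the $(1-p_n)$ factor that, through $\theta_n^2\approx np_n^2\mu_n^2(\sigma_n^2-\mu_n^2)\approx (1-p_n)\mu_n^2/(n^2p_n)$, supplies the $\sqrt{1-p^*}$ in the denominator and makes the final expression collapse to $\sqrt{p^*(1-p^*)/2}$ rather than to something involving $1/p^*$.

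There is also an arithmetic slip along the way: from $\theta_n^2\approx np_n^2\mu_n^2(np_n)^{-3}$ you write $\mu_n^2 p_n^2(np_n)^{-2}=\mu_n^2/n^2$, but $np_n^2\mu_n^2(np_n)^{-3}=\mu_n^2/(n^2p_n)$; the two differ by a factor $1/p_n$. Chasing through your numbers (even granting the missing $(1-p)$) gives the ratio $\approx (1-p_n)\sqrt{p_n}/\sqrt{2}=\sqrt{p_n(1-p_n)^2/2}$, not $\sqrt{p_n(1-p_n)/2}$; the surplus $\sqrt{1-p_n}$ is exactly the factor you lost by omitting $1-p_n$ in $\sigma_n^2-\mu_n^2$. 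And the heuristic you reach for at the end, ``$n\mu_n\to 1$ effectively,'' is false: $n\mu_n\to 1/p^*$; what is true and what the paper uses is $np_n\mu_n\to 1$. So the proposal, as written, does not establish the limit; to fix it, replace the invocation of Corollary~\ref{cor:invmoments} by the precise identity from Remark~\ref{rem:zni} for $\E\big[(X+1)^{-2}\big]$, subtract $\mu_n^2=(np_n)^{-2}(1-(1-p_n)^n)^2$ to obtain $\sigma_n^2-\mu_n^2\approx(1-p_n)/(np_n)^3$, and then the algebra closes cleanly as in the paper.
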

\begin{proof}
By Corollary \ref{cor:betagamma} and $np\to\infty$, we know that $\theta_n^2\approx np\gamma_n^2$ 
as $\beta_n^2/2$ is negligible compared to $np\gamma_n^2$. Furthermore, by Lemma \ref{lem:gammabeta} we have that $\gamma_n^2=p\mu^2(\sigma^2-\mu^2)$. Consider $\sigma^2-\mu^2$: By Remark \ref{rem:zni} and Lemma \ref{lem:firstmom}
\begin{align*}
\sigma^2-\mu^2&=\E\Bigl[\Bigl(\frac{1}{X+1}\Bigr)^2\Bigr]-\frac{1}{(np)^2}\left(1-(1-p)^n\right)^2\\
&=\frac{1}{(np)^2}\left(\frac{1-p}{np}+\Theta\Bigl(\frac{1}{(np)^2}\Bigr)\right)+\frac{1}{(np)^2}\bigl((1-p)^{2n}-2(1-p)^n\bigr)\\
&\approx\frac{1-p}{(np)^3}
\end{align*}
From Lemma \ref{lem:firstmom} we also find $\mu\approx\frac{1}{np}$. 
Putting everything together:
\begin{align*}
\mu^2\frac{\sqrt{\binom{n+1}{2}}p(1-p)}{n\theta_n}&\approx\frac{\mu^2\frac{n}{\sqrt{2}}p(1-p)}{n\sqrt{np^2\mu^2(\sigma^2-\mu^2)}}
&\approx\frac{1}{np}\frac{(1-p)}{\sqrt{2n\frac{1-p}{(np)^3}}}\approx\frac{(1-p)(np)^{1/2}}{\sqrt{2n(1-p)}}=\sqrt{\frac{(1-p)p}{2}}
\end{align*}
proving the claim.
\end{proof}

Define
\begin{equation}
Z_n\coloneqq\frac{1}{n\theta_n}\sum\limits_{i<j}\mu^2(Z_{ij}-p).
\label{eq:defzn}
\end{equation}
Combining \eqref{eq:classical} and Lemma \ref{lem:limitpconv}, and letting $\mathfrak{N}$ denote a standard normal random variable,
we see that
\begin{equation}
Z_n
\dconv \sqrt{\frac{(1-p^*)p^*}{2}}\cdot\mathfrak{N},
\label{eq:resttermconv1}
\end{equation}
if $p^*\notin \{0,1\}$.  
If $p^*\in\{0,1\}$, we obtain that
$Z_n\dconv 0$ and therefore
$Z_n\Pconv 0.$
Thus, if $p^*\in\{0,1\}$, we see that
\begin{equation}
V_n+Z_n\underset{p^*\in\{0,1\}}{\dconv} \mathfrak{N}.
\label{eq:pconv0case}
\end{equation}

For the other cases, we follow the approach of the proof of Theorem 1 in \cite{Janson84}.
Let $V\sim\mathcal{N}\left(0,1\right)$ and $W\sim\mathcal{N}\left(0,\frac{(1-p^*)p^*}{2}\right)$ be independent random variables with characteristic functions $\psi_V(t)$ and $\psi_W(t)$, resp.
Set
\[\varphi(s,t)=\psi_V(s)\psi_W(t).\]
For almost every realization $z=\left(z_{i,j}\right)_{1\leq i<j\leq n+1}$ of $Z=\left(Z_{i,j}\right)_{1\leq i<j\leq n+1}$ we find that
\begin{lem}\label{eq:jointconvcond1}
\begin{equation*}
\EW{e^{isV_n}\mid Z=z}\nconv e^{-\frac{1}{2}s^2}=\psi_V(s).
\end{equation*}
\end{lem}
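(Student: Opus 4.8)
The plan is to prove this conditional CLT by verifying a Lindeberg-type condition for the array of conditional summands, exactly in the spirit of the argument that already established Proposition~\ref{CLTVn}, but now with the $Z_{ij}$ frozen at a typical realization $z$. Conditionally on $Z=z$, we have
\[
V_n = \sum_{i<j} z_{ij}\,h_n(X_{n,i},X_{n,j}),
\]
which is again a U-statistic with a symmetric, centered kernel, but now with a deterministic "incompleteness pattern" $z$. The first step is to record that for almost every realization $z$, the edge set $\{(i,j): z_{ij}=1\}$ behaves like a typical Erd\H{o}s--R\'enyi configuration: each vertex has degree $\approx np_n$, the total number of edges is $\approx \binom{n+1}{2}p_n$, and two-edge incidence counts are $\approx (np_n)^2$. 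These facts hold almost surely along the sequence by Borel--Cantelli together with standard binomial concentration (and this is why Lemma~\ref{eq:jointconvcond1} is only asserted for almost every $z$). I would state this as a preliminary claim at the start of the proof and use it throughout to replace the factors $p$, $np$, $(np)^2$ that appeared in the unconditional computation by their empirical counterparts for the pattern $z$.

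Next I would run the Hoeffding-type decomposition conditionally on $Z=z$: write $\bar h(i)=\EW{h_n(X_{n,i},X_{n,j})\mid X_{n,i}}=\mu(\tfrac{1}{X_i+1}-\mu)$ and $\tilde h(i,j)=h_n(i,j)-\bar h(i)-\bar h(j)$, so that
\[
V_n = \sum_i \Big(\sum_{j:\,z_{ij}=1}\bar h(j)\Big)\cdot\frac{\text{(deg stuff)}}{\cdots}\;+\;\sum_{i<j} z_{ij}\,\tilde h(i,j).
\]
More precisely, the linear part is $\sum_{i<j} z_{ij}(\bar h(i)+\bar h(j)) = \sum_i \deg_z(i)\,\bar h(i)$, where $\deg_z(i)=\#\{j:z_{ij}=1\}$. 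Since the $\bar h(i)$ are i.i.d., centered, with variance $\gamma_n^2/p = \mu^2(\sigma^2-\mu^2)=\Theta((np)^{-5})$, and $\deg_z(i)\approx np$, the linear part is a sum of independent (though not identically distributed, because of the varying degrees) centered random variables with total variance $\approx \sum_i (np)^2\,\mu^2(\sigma^2-\mu^2)\approx n(np)^2\cdot\Theta((np)^{-5}) = \Theta(np^2/(np)^4)$; matching against $n^2\theta_n^2 \approx n^2\cdot np\gamma_n^2 = n^3 p\cdot\Theta(p/(np)^4)$ shows the linear part contributes variance $\to 1$ after normalization by $n\theta_n$ (this is the place where the $np\gamma_n^2$ term of $\theta_n^2$ comes from). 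I would then apply the Lindeberg CLT to this triangular array of independent summands: the Lyapunov-type bound needs $\sum_i \EW{(\deg_z(i)\bar h(i))^4}/(n\theta_n)^4\to 0$, which follows from $\EW{\bar h(i)^4}=\mu^4\EW{(\tfrac{1}{X+1}-\mu)^4}=\Theta((np)^{-6}\cdot(np)^{-6})$ and $\deg_z(i)^4\approx(np)^4$ just as in the fourth-moment estimate \eqref{eq:4mompsi} in Lemma~\ref{lem:cond6}. The degenerate (quadratic) part $\sum_{i<j}z_{ij}\tilde h(i,j)$ is handled by showing its conditional variance is of lower order: it equals $\sum_{i<j}z_{ij}\,\EW{\tilde h^2(1,2)}$ plus cross terms that vanish by \eqref{eq:htildecentering}, i.e.\ $\approx \binom{n+1}{2}p\cdot\beta_n^2/p \cdot(\text{normalization})$, contributing the $\beta_n^2/2$ piece which is $o(1)$ relative to $n\theta_n$ because $\beta_n^2/(np\gamma_n^2)\to 0$; a Markov/Chebyshev bound then kills it in probability, and hence it does not affect the limit of the characteristic function. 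Combining the Lindeberg CLT for the linear part with the negligibility of the quadratic part gives $\EW{e^{isV_n}\mid Z=z}\to e^{-s^2/2}$.

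The main obstacle I expect is the almost-sure control of the degree sequence of $z$ and the resulting non-identically-distributed structure of the linear part: one must be careful that the fluctuations of the $\deg_z(i)$ around $np$ (which are of order $\sqrt{np}$, i.e.\ relatively $O((np)^{-1/2})\to 0$) really are negligible for the variance computation, and that the exceptional null set of "bad" $z$ is genuinely null along the whole sequence --- this is exactly where Borel--Cantelli and the condition $np/\log n\to\infty$ enter (so that the probability of a bad degree deviation is summable). A secondary subtlety is that conditioning on $Z=z$ does \emph{not} destroy the independence of the $X_{n,i}$, so the Hoeffding decomposition and all conditional-expectation identities from the unconditional case carry over verbatim; one only needs to re-derive the orders of $\gamma_n^2,\beta_n^2,\theta_n^2$ with the deterministic pattern, which is routine given Corollary~\ref{cor:betagamma} and the preliminary degree estimate. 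Once the Lindeberg condition is in place the convergence to $\psi_V(s)=e^{-s^2/2}$ is immediate.
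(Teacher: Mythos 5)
Your proposal is correct but takes a genuinely different route from the paper. You decompose $V_n$ (conditionally on $Z=z$) directly via Hoeffding into a linear projection $\sum_i \deg_z(i)\bar h(i)$ and a degenerate remainder $\sum_{i<j} z_{ij}\tilde h(i,j)$, then apply the Lindeberg--Feller CLT to the linear part (a sum of independent but non-i.i.d.\ weighted variables) and kill the quadratic part by a conditional second-moment bound, using $\beta_n^2 / (np\gamma_n^2)\to 0$. The paper instead follows Janson's method for incomplete $U$-statistics: it introduces the \emph{complete} $U$-statistic $T_n=\frac{1}{n\alpha_n}\sum_{i<j}h(i,j)$, computes $\E[T_n^2]$, $\E[V_n^2\mid Z=z]$ and $\E[V_nT_n\mid Z=z]$ using counts $A_0',A_1',A_2'$ of overlapping edge pairs derived from strong-law statements for $\sum z_{ij}$ and $\sum z_{ij}z_{kl}$, concludes $\E[(T_n-V_n)^2\mid Z=z]=o(1)$ for a.e.\ $z$, and then invokes a CLT for the unconditional $T_n$ from the reference [Mal87]. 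Each approach has a cost: yours is more self-contained (no appeal to an external CLT for $T_n$), but it requires uniform almost-sure degree concentration $\max_i|\deg_z(i)-np|=o(np)$, which you correctly flag as the place where Borel--Cantelli and $np/\log n\to\infty$ enter; the paper's comparison-to-$T_n$ route only needs SLLN-type control of aggregate sums of $z_{ij}$ and $z_{ij}z_{kl}$, a weaker input, at the price of introducing $T_n$, $\alpha_n$ and citing [Mal87] (whose hypotheses still need Lindeberg-type verification ``analogous to Lemmas 5.6--5.9''). Both routes rely on the almost-sure coupling of the $Z$-arrays across $n$ to make ``for almost every $z$'' meaningful, so neither side avoids that subtlety. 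Your sketch is sound; to make it a complete proof you would need to write out (i) the Borel--Cantelli argument for the degree sequence, (ii) the exact statement that $\sum_i \deg_z(i)^2\,\mathrm{Var}(\bar h)/(n\theta_n)^2\to 1$ a.s.\ (not just $O(1)$), and (iii) the Lyapunov ratio computation you outline, which does close to $O(1/n)$ as you indicate.
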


\begin{proof}
By the strong law of large numbers, we have that
\begin{equation}
\sum\limits_{i<j}Z_{ij}= \binom{n}{2}p(1+o(1)) \quad \mbox{ a.a.s.}
\label{eq:slln1}
\end{equation}
 in the sense that, for almost every sequence $z$ there is a $o(1)$-term such that \eqref{eq:slln1} is fulfilled.
By squaring this equation
\begin{equation}\label{squared}
\sum\limits_{i<j}Z_{ij}\sum\limits_{k<l}Z_{kl}= \left(\binom{n}{2}\right)^2p^2(1+o(1)) \quad \mbox{ a.a.s.}
\end{equation}
Note that those pairs $Z_{ij}Z_{kl}$ in \eqref{squared} where at least two of $i,j,k,l$ are equal, are asymptotically negligible, because $|Z_{ij}|\le 1$ and there are at most $O(n^3)$ of such pairs, while the total number of summands is $\left(\binom n 2\right)^2$.
Considering
$Z_{ij}Z_{kl}$ with $i<j$ and $k<l$ pairwise different we realize that there 6 different ways to order the indices (one of them being $i<j<k<l$) and all of them occur equally often.
Hence
\begin{equation}
\sum\limits_{i<j<k<l}Z_{ij}Z_{kl}= \binom{n}{4}p^2(1+o(1))=\frac{1}{6}\binom{n}{2}\binom{n-2}{2}p^2(1+o(1))  \quad \mbox{ a.a.s.}
\label{eq:slln2}
\end{equation}
%
We follow the concept of the proof of Theorem 2 in \cite{Janson84}.
Let
\[T_n=\frac{1}{n\alpha_n}\sum\limits_{i<j}h(i,j)\quad \text{ and } \quad V_n=\frac{1}{n\theta_n}\sum\limits_{i<j}Z_{i,j}h(i,j).\]
with $\alpha_n^2:=n \tilde \gamma_n^2+\tilde \beta_n^2/2$, $\alpha_n \ge 0$ chosen as in \cite{Mal87} and $\tilde{\beta}_n^2:= \beta_n^2/p$ and $\tilde{\gamma}_n^2:=\gamma_n^2/p$.

Then, 
\begin{align*}\E\left[T_n^2\right]&=\frac{1}{n^2\alpha_n^2}\sum\limits_{i<j}\sum\limits_{k<l}\EW{h(i,j)h(k,l)}
\eqqcolon\frac{1}{n^2\alpha_n^2}\sum\limits_{c=0}^2A_cB_c\end{align*}
where
\[A_c=\underset{|\{i,j\}\cap\{k,l\}|=c}{\sum\limits_{i<j}\sum\limits_{k<l}}1=\begin{cases}
\binom{n}{2}\binom{n-2}{2},&c=0\\
\binom{n}{2}(2n-4),&c=1\\
\binom{n}{2},&c=2
\end{cases}\]
and \[B_c=\begin{cases}
\EW{h(1,2)h(3,4)}=0,&c=0\\
\EW{h(1,2)h(1,3)}=\tilde{\gamma}_n^2,&c=1\\
\EW{h(1,2)^2}=\tilde{\beta}_n^2,&c=2
\end{cases},\]
so that
\begin{equation}
\E\left[T_n^2\right]\approx\frac{1}{n^2\alpha_n^2}\binom{n}{2}\left(2n\tilde{\gamma}_n^2+\tilde{\beta}_n^2\right)=1+o(1)
\label{eq:vun}
\end{equation}
The conditional second moment of $V_n$ given $Z=z$ can be calculated in the same fashion: 
\begin{align*}
\EW{V_n^2|Z=z}&=\frac{1}{n^2\theta_n^2}\sum\limits_{i<j}\sum\limits_{k<l}z_{i,j}z_{k,l}\EW{h(i,j)h(k,l)}\eqqcolon\frac{1}{n^2\theta_n^2}\sum\limits_{c=0}^2A_c'B_c
\end{align*}
where
$A_c'=\underset{|\{i,j\}\cap\{k,l\}|=c}{\sum\limits_{i<j}\sum\limits_{k<l}}z_{i,j}z_{k,l}$.

From \eqref{squared}, $$A_0'+A_1'+A_2'=\sum\limits_{i<j}\sum\limits_{k<l}z_{i,j}z_{k,l}=\binom{n}{2}^2p^2(1+o(1))$$
as well as $A_0'=\binom{n}{2}\binom{n-2}{2}p^2(1+o(1))$ from \eqref{eq:slln2} and $A_2'=\binom{n}{2}p(1+o(1))$ from \eqref{eq:slln1}
for almost every sequence $z$.
We can therefore calculate
\begin{align*}
A_1'&=(A_0'+A_1'+A_2')-A_0'-A_2'=(1+o(1))\Big(\binom{n}{2}^2p^2-\binom{n}{2}\binom{n-2}{2}p^2-\binom{n}{2}p\Big)\\
&=(1+o(1))\Big(\binom{n}{2}p(2np-3p-1)\Big),
\end{align*}
thus $A_1=\binom{n}{2}p\cdot2np(1+o(1))$ for almost every sequence $z$.

We obtain
\begin{align*}
\EW{V_n^2|Z=z}&=\frac{1}{n^2\theta_n^2}\binom{n}{2}\left(2np^2\tilde{\gamma}_n^2+\tilde{\beta}_n^2p\right)(1+o(1))=\frac{1}{n^2\theta_n^2}\binom{n}{2}\left(2np\gamma_n^2+\beta_n^2\right)(1+o(1))\\
&=1+o(1)
\end{align*}
for almost every sequence $z$.
Finally, similar to $\EW{V_n^2\mid Z=z}$ one can calculate
\[\EW{V_nT_n| Z=z}=\frac{1}{n^2\alpha_n\theta_n}\binom{n}{2}\left(2np\tilde{\gamma}_n^2+\tilde{\beta}_n^2p\right)(1+o(1))=\frac{1}{n^2\alpha_n\theta_n}\binom{n}{2}2p\alpha_n^2(1+o(1))\]
for almost every sequence $z$. Observing that $\frac{p\alpha_n}{\theta_n}\approx\frac{p\sqrt{n\tilde{\gamma}_n}}{\sqrt{np\gamma_n}}=1$ from Corollary \ref{cor:betagamma} and $np\to\infty$ yields
\[\EW{V_nT_n| Z=z}=1+o(1)\]
for almost every sequence $z$ and therefore, combining above estimations
$$\EW{(T_n-V_n)^2|Z=z}=\EW{T_n^2}-2\EW{V_nT_n| Z=z+}\EW{V_n^2|Z=z}=o(1)$$
for almost all $z$.
Therefore, for almost all $z$ we obtain $|V_n^z-T_n|\Pconv 0$, where $V_n^z$ is $V_n$ restriced to $Z=z$. 

Applying the result from \cite{Mal87}, we know $T_n\dconv \mathfrak{N}$ (the conditions can be shown  analogously to  Lemma \ref{lem:cond6} -- Lemma \ref{lem:cond7}).
Thus  $V_n^z\dconv \mathfrak{N}$ for almost all $z$ and the claim holds.
\end{proof}

Now,
\begin{align*}
\EW{e^{isV_n+itZ_n}}-\varphi(s,t)&=\EW{e^{isV_n+itZ_n}}-e^{-\frac{1}{2}s^2}\psi_W(t)\\
&\hspace{-2cm}=\EW{e^{itZ_n}\left(\EW{e^{isV_n}\mid Z}-e^{-\frac{1}{2}s^2}\right)}+\left(\EW{e^{itZ_n}}-\psi_W(t)\right)e^{-\frac{1}{2}s^2},
\end{align*}
where both summands on the right hand side almost surely converge to 0 by Lemma \ref{eq:jointconvcond1} and \eqref{eq:resttermconv1}.
In total, we find that
$(V_n,Z_n)\dconv\left(V,W\right).$
By the continuous mapping theorem, we thus obtain
\begin{equation}
V_n+Z_n\dconv V+W=\mathcal{N}\left(0,1\right)+\mathcal{N}\left(0,\frac{(1-p^*)p^*}{2}\right)=\mathcal{N}\left(0,\,1+\frac{(1-p^*)p^*}{2}\right).
\label{eq:pconvnot0case}
\end{equation}

Including both cases \eqref{eq:pconv0case} and \eqref{eq:pconvnot0case} in one theorem we have seen that
\begin{theorem}\label{thm:main}
With the notations from before,
\begin{multline*}V_n+Z_n=\frac{1}{n\theta_n}\sum\limits_{i<j}\frac{Z_{ij}}{(X_i+1)(X_j+1)}-\frac{1}{n\theta_n}\binom{n+1}{2}\mu^2p\\=\frac{1}{n\theta_n}\sum\limits_{i<j}\left(\frac{Z_{ij}}{(X_i+1)(X_j+1)}-\mu^2p\right).\end{multline*}
Then the following holds:
$V_n+Z_n\dconv\mathfrak{V}$
where
$\mathfrak{V}=\mathcal{N}\left(0,1+\frac{(1-p^*)p^*}{2}\right)$.
\end{theorem}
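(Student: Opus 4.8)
The statement is the synthesis of the two regimes $p^{*}\in\{0,1\}$ and $p^{*}\notin\{0,1\}$ prepared above, so the plan is essentially one of assembly. First I would record the trivial algebraic identity behind the displayed formula: adding $V_{n}=\frac{1}{n\theta_{n}}\sum_{i<j}Z_{ij}\bigl(\frac{1}{(X_{i}+1)(X_{j}+1)}-\mu^{2}\bigr)$ to $Z_{n}=\frac{1}{n\theta_{n}}\sum_{i<j}\mu^{2}(Z_{ij}-p)$ from \eqref{eq:defzn}, the contributions $-Z_{ij}\mu^{2}$ and $+\mu^{2}Z_{ij}$ cancel termwise and leave $\frac{1}{n\theta_{n}}\sum_{i<j}\bigl(\frac{Z_{ij}}{(X_{i}+1)(X_{j}+1)}-\mu^{2}p\bigr)$, as claimed.

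For $p^{*}\in\{0,1\}$ I would argue as follows. Lemma \ref{lem:limitpconv} gives $\mu^{2}\frac{\sqrt{\binom{n+1}{2}}\,p(1-p)}{n\theta_{n}}\to\sqrt{p^{*}(1-p^{*})/2}=0$, so by the classical CLT \eqref{eq:classical} and Slutsky's lemma $Z_{n}\Pconv 0$. Combining this with $V_{n}\dconv\mathcal{N}(0,1)$ from Proposition \ref{CLTVn}, and noting $1+\frac{(1-p^{*})p^{*}}{2}=1$ in this case, a further application of Slutsky yields \eqref{eq:pconv0case}, i.e.\ $V_{n}+Z_{n}\dconv\mathfrak{V}$.

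For $p^{*}\notin\{0,1\}$ the point is to promote the two marginal limits $V_{n}\dconv V\sim\mathcal{N}(0,1)$ and $Z_{n}\dconv W\sim\mathcal{N}(0,(1-p^{*})p^{*}/2)$ (from Proposition \ref{CLTVn} and \eqref{eq:resttermconv1}) to a joint limit $(V_{n},Z_{n})\dconv(V,W)$ with $V,W$ independent; then the continuous mapping theorem applied to addition gives $V_{n}+Z_{n}\dconv V+W\sim\mathcal{N}\bigl(0,1+\frac{(1-p^{*})p^{*}}{2}\bigr)$. Since $Z_{n}$ is $\sigma(Z)$-measurable, I would establish the joint limit via characteristic functions, conditioning on $Z=z$: write
\[
\EW{e^{isV_{n}+itZ_{n}}}-\psi_{V}(s)\psi_{W}(t)=\EW{e^{itZ_{n}}\bigl(\EW{e^{isV_{n}}\mid Z}-\psi_{V}(s)\bigr)}+\bigl(\EW{e^{itZ_{n}}}-\psi_{W}(t)\bigr)\psi_{V}(s).
\]
The first term tends to $0$ by bounded convergence, its integrand being bounded by $2$ in modulus and tending to $0$ almost surely by Lemma \ref{eq:jointconvcond1}; the second term tends to $0$ by \eqref{eq:resttermconv1}. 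Hence $(V_{n},Z_{n})\dconv(V,W)$, which is \eqref{eq:pconvnot0case}. Putting \eqref{eq:pconv0case} and \eqref{eq:pconvnot0case} together proves the theorem.

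Where is the real work? Not in this assembly but in Lemma \ref{eq:jointconvcond1}: one must show that for almost every realization $z$ of the Bernoulli array the incomplete U-statistic $V_{n}$ conditioned on $Z=z$ is asymptotically $L^{2}$-equivalent to the complete U-statistic $T_{n}=\frac{1}{n\alpha_{n}}\sum_{i<j}h(i,j)$ in the $X$-variables, so that $T_{n}\dconv\mathfrak{N}$ from \cite{Mal87} transfers. The hard part there is the almost-sure bookkeeping of how many pairs of unordered index pairs share $0$, $1$, or $2$ indices -- controlled through the strong-law estimates \eqref{eq:slln1}, \eqref{squared}, and \eqref{eq:slln2} -- and matching the resulting conditional second-moment and cross-moment computations with those of $T_{n}$; once that is in hand, the passage from almost-sure conditional convergence to unconditional joint convergence in the display above is routine.
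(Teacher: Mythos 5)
Your proposal follows essentially the same route as the paper: the termwise cancellation for the algebraic identity, the split into $p^*\in\{0,1\}$ versus $p^*\notin\{0,1\}$, the characteristic-function decomposition after Janson, and the observation that the substance lies in Lemma \ref{eq:jointconvcond1} and its SLLN index-counting. If anything you are slightly more careful than the paper's prose in spelling out that the first term in the characteristic-function decomposition tends to zero by bounded convergence (the paper loosely says both summands ``almost surely converge to 0''), but this is a presentational refinement, not a different argument.
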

\section{A CLT for $U_n$}
We now want to prove a CLT for the U-statistic $U_n$ defined in \eqref{eq:mainustat}.
In order to get the same result for $U_n$ as for $V_n+Z_n$, we want to show that the moments of $U_n$ behave like those of $V_n+Z_n$. If we also manage to show that the moments of $V_n+Z_n$ converge to those of its limiting distribution, this convergence also holds for $U_n$ and by the method of moments, the desired result holds. In order to show that the moments of $V_n+Z_n$ even converge to those of limiting distribution $\mathcal{N}(0,1+\frac{(1-p^*)p^*}{2})$, it suffices to show that $\sup_n\EW{(V_n+Z_n)^{2k}}<\infty$ for all $k$, by \cite{DasGupta2008}, Theorem 6.2.
We start with a combinatorial lemma which we will need later.

\begin{lem}\label{lem:combinatorial2}
For natural numbers $k_{ij}$ we have
$$\sum\limits_{\sum\limits_{i<j}k_{ij}=2k}p^{\sum\limits_{i<j}\1_{\{k_{ij}>0\}}}\prod\limits_{i=1}^{n+1}\1_{\left\{\sum\limits_{j=i+1}^{n}k_{ij}+\sum\limits_{j=1}^{i-1}k_{ji}\neq 1\right\}}= O\left((np)^kn^k\right).$$
\end{lem}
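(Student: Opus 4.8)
\emph{Interpretation as a multigraph.} Fix $k\ge 1$ (the case $k=0$ is trivial). Given a tuple $(k_{ij})_{1\le i<j\le n+1}$ of non-negative integers with $\sum_{i<j}k_{ij}=2k$, I would form the multigraph $M$ on $\{1,\dots,n+1\}$ that has $k_{ij}$ parallel edges between $i$ and $j$. Then $M$ has exactly $2k$ edges, so its degrees $d_M(i)$ satisfy $\sum_i d_M(i)=4k$. The product of indicators over the $n+1$ vertices says precisely that $d_M(i)\neq 1$ for every $i$, i.e.\ every vertex incident to an edge has $d_M(i)\ge 2$. Let $H$ be the simple ``support graph'' of $M$ ($\{i,j\}$ is an edge of $H$ iff $k_{ij}>0$), with $m$ vertices and $e$ edges; the weight attached to the tuple is $p^{\sum_{i<j}\1_{\{k_{ij}>0\}}}=p^{e}$. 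Two bounds are then immediate: summing the degrees $\ge 2$ over the support gives $4k\ge 2m$, hence $m\le 2k$; and the handshake identity for $H$, together with the fact that every support vertex has $H$-degree $\ge 1$, gives $2e=\sum_i d_H(i)\ge m$, hence $m\le 2e$.

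\emph{Grouping by shape.} Next I would organise the sum according to the ``shape'' of the tuple: the abstract isomorphism type of $H$ together with the edge multiplicities. Since $m\le 2k$, $H$ has at most $\binom{2k}{2}$ edges and each multiplicity is at most $2k$, so the number of shapes is finite and bounded by a constant $C_k$ depending only on $k$. For a fixed shape with parameters $(m,e)$, the number of tuples realising it is at most $(n+1)^{m}$ — choose which elements of $\{1,\dots,n+1\}$ play the $m$ support vertices (this over-counts when $H$ has automorphisms, which only helps) — and each contributes the weight $p^{e}$. Hence the left-hand side of the lemma is at most $C_k\cdot\max_{(m,e)} n^{m}p^{e}$, the maximum being over the finitely many admissible shapes, and it remains to show $n^{m}p^{e}=O\big(n^{2k}p^{k}\big)=O\big((np)^k n^k\big)$ for each of them.

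\emph{The arithmetic.} Write
\[
n^{2k}p^{k}=\big(n^{2k-m}p^{k-e}\big)\cdot n^{m}p^{e},
\]
and bound the first factor from below. If $e\ge k$, then $n^{2k-m}\ge 1$ (because $m\le 2k$) and $p^{k-e}\ge 1$, so the factor is $\ge 1$. If $e<k$, then using $m\le 2e$ we get $2k-m\ge 2(k-e)>0$, so
\[
n^{2k-m}p^{k-e}\ \ge\ n^{2(k-e)}p^{k-e}=(n^{2}p)^{k-e}\ \ge\ 1
\]
for all $n$ with $n^{2}p\ge 1$, which holds for large $n$ since $n^{2}p\ge np\to\infty$. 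In either case $n^{m}p^{e}\le n^{2k}p^{k}$, and combining with the previous paragraph proves the claim.

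\emph{The main obstacle.} The only genuinely delicate regime is $e<k$: multigraphs assembled from few distinct edges carrying high multiplicities, where the weight $p^{e}$ is large and the compensating factor $p^{k-e}$ in the estimate blows up. The no-degree-one hypothesis is exactly what rescues the bound, since a high-multiplicity edge concentrates degree on just two endpoints and thereby forces $m$ to stay small relative to $k$ (this is the inequality $m\le 2e$), so that the surplus power $n^{2k-m}$ of $n$ outweighs the blow-up of $p^{k-e}$ via $n^{2}p\to\infty$. Without the hypothesis — e.g.\ a single degree-one vertex attached to any one of $n$ possible partners — the estimate would fail, which is why the constraint appears in the statement.
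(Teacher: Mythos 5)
Your proof is correct and follows essentially the same combinatorial skeleton as the paper: both interpret a tuple $(k_{ij})$ as a (symmetric) multigraph, deduce $m\le 2k$ from the no-degree-one constraint, exploit the handshake bound $e\ge m/2$ (your $m\le 2e$), and weight $n^m$ index choices by $p^e$. The paper's arithmetic is a touch more direct, bounding $n^m p^e\le n^m p^{m/2}=n^{m/2}(np)^{m/2}\le n^k(np)^k$ in one line without your two-case split, but the argument is the same.
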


The proof of this lemma will be given in the appendix.

Next we will see
\begin{prop}\label{prop:limitedvn}
For any fixed $k\in\N$ we have
\[\EW{(V_n)^{2k}}= O(1).\]
\end{prop}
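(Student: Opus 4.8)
The plan is to control the $2k$-th moment combinatorially, exploiting the centering of $h_n$ after passing to the centered variables $Y_v:=\tfrac{1}{X_v+1}-\mu_n$, which satisfy $\EW{Y_v}=0$ and, by Corollary \ref{cor:invmoments} (and Cauchy--Schwarz for odd exponents), $\EW{|Y_v|^m}=O(\tau_n^m)$ for every fixed $m\ge 2$, where $\tau_n^2:=\sigma_n^2-\mu_n^2=\Theta\big((np)^{-3}\big)$. Since $h_n(x,y)=\mu_n Y_x+\mu_n Y_y+Y_xY_y$, I would expand
\[
(n\theta_n)^{2k}\,\EW{V_n^{2k}}=\EW{\Big(\sum_{1\le i<j\le n+1}Z_{ij}h_n(X_i,X_j)\Big)^{2k}}
\]
over ordered $2k$-tuples of edges and group the tuples according to the multigraph they span, i.e.\ the multi-index $(k_{ij})$ with $\sum_{i<j}k_{ij}=2k$; the combinatorial multiplicities are $O_k(1)$. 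Using independence of the $Z_{ij}\sim\mathrm{Ber}(p)$ among themselves and from the $X_v$, the $Z$-factors contribute $p^{N}$ with $N=\#\{(i,j):k_{ij}>0\}$, leaving the $X$-expectation $\EW{\prod_{i<j}h_n(X_i,X_j)^{k_{ij}}}$.

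Next I would expand every factor $h_n=\mu_n Y_i+\mu_n Y_j+Y_iY_j$ inside this expectation, obtaining monomials $\mu_n^{M}\prod_v Y_v^{e_v}$ with $M+\sum_v e_v=4k$. By independence of the $X_v$ and $\EW{Y_v}=0$, a monomial contributes only if $e_v\neq 1$ for \emph{every} vertex $v$; in particular, if some connected component of the multigraph is a single edge of multiplicity one then $\EW{\prod h_n^{k_{ij}}}=0$. Hence we may restrict to multigraphs all of whose components have total multiplicity $\ge 2$, so the number of components is $\kappa\le k$, and therefore $W\le N+\kappa\le N+k$, where $W$ is the number of vertices incident to the multigraph (a connected multigraph has at least as many distinct edges as vertices, minus one). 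Combining $\mu_n^{M}=\Theta((np)^{-M})$ (Lemma \ref{lem:firstmom}) with the moment bound on $Y_v$, each surviving monomial has expectation $O\big(\mu_n^{M}\tau_n^{\,4k-M}\big)$, so that the total contribution of one multigraph shape (with $O_k(n^{W})$ vertex labellings) to $\EW{V_n^{2k}}$ is, up to $O_k(1)$,
\[
\frac{n^{W}\,p^{N}\,\mu_n^{M}\,\tau_n^{\,4k-M}}{(n\theta_n)^{2k}}.
\]

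Finally I would insert the orders $(n\theta_n)^2\approx n^3p^2\mu_n^2\tau_n^2$ (from $\theta_n^2\approx np\gamma_n^2$, Lemma \ref{lem:gammabeta} and Corollary \ref{cor:betagamma}), $\mu_n=\Theta((np)^{-1})$, $\tau_n=\Theta((np)^{-3/2})$, which collapses the displayed quantity to $n^{\,W+M/2-4k}\,p^{\,N+M/2-3k}$. Since $N\le 2k$ and $M\le 2k$, the $p$-exponent is $\le 0$. For the $n$-exponent one checks $W+M/2\le 4k$: every edge has at least one ``active'' endpoint (a vertex with $e_v\ge 2$), so $W\le V'+(\#\text{pendant edges})\le V'+2k$ with $V'=\#\{v:e_v\ge 2\}\le\tfrac12\sum_v e_v=\tfrac12(4k-M)$. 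Because $np\gg\log n$ forces $p^{-1}=o(n)$, the two exponents combine to $n^{\,W+M/2-4k}p^{\,N+M/2-3k}=o\big(n^{\,W-N-k}\big)$, which by $W\le N+k$ is $O(1)$ — and in fact $o(1)$ unless $W=N+k$, $N=2k$ and $M=2k$, a case that pins the shape down to $k$ disjoint ``cherries'' $K_{1,2}$ and then contributes a genuine $O(1)$. Summing the $O_k(1)$ shapes gives $\EW{V_n^{2k}}=O(1)$.

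The main obstacle is exactly this last bookkeeping: one must keep $W$, $N$ and $M$ under simultaneous control and, crucially, handle the ``cherry''-type shapes, which carry degree-one vertices in the original multigraph (and so are invisible to a naive ``no degree one'' restriction) yet are precisely the shapes yielding the non-vanishing $O(1)$ contribution; this combinatorial content is what Lemma \ref{lem:combinatorial2} packages.
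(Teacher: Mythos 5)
Your proof is correct, but it takes a genuinely different route from the paper's. The paper first splits $V_n$ into its two Hoeffding components, $\sum_{i<j}\tilde{\Phi}(i,j)$ and $\sum_{i\neq j}\Psi_j(i)$, bounds $\E\bigl[(\cdot)^{2k}\bigr]$ of each sum separately (via $(a+b)^{2k}\le 2^{2k-1}(a^{2k}+b^{2k})$), and packages the ``no degree-one vertex'' constraint in Lemma~\ref{lem:combinatorial2} and Corollary~\ref{cor:binomialcor}. In your notation this isolates the two pure cases $M=0$ (the $\tilde\Phi$-sum, where every factor contributes $Y_iY_j$) and $M=2k$ (the $\Psi$-sum, where every factor contributes $\mu Y$), so the mixed-$M$ monomials never have to be tracked. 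You instead keep $V_n$ intact, expand $h_n=\mu Y_i+\mu Y_j+Y_iY_j$ inside the $2k$-fold product, and control the triple $(W,N,M)$ directly; the key combinatorial fact you extract from $\EW{Y_v}=0$ is that every surviving component carries total multiplicity at least $2$, giving $\kappa\le k$ and hence $W\le N+\kappa\le N+k$. Together with $N\le 2k$, $M\le 2k$, and $p^{-1}=o(n)$, that already closes the argument (your additional observation $W+M/2\le 4k$ via pendant edges is correct but redundant, since it follows from $W\le N+k$ and $N+M/2\le 3k$). The upshot: your bookkeeping is a little heavier because it must handle all $0\le M\le 2k$, but in exchange you see exactly which shapes saturate the bound (disjoint cherries $K_{1,2}$, where one component contributes $(p^2)(\mu^2)(\tau_n^2)\cdot n^3$ against $(n\theta_n)^2$), whereas the paper's split hides the extremal configurations inside the triangle inequality. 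Both arguments rest on the same moment input from Corollary~\ref{cor:invmoments}.
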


\begin{proof}
Since $V_n=\frac{1}{n\theta_n}\sum\limits_{i=1}^{n+1}\sum\limits_{j=1}^{i-1}\tilde{\Phi}(i,j)+\frac{1}{n\theta_n}\sum\limits_{i=1}^{n+1}\sum\limits_{\substack{j=1 \\j\neq i}}^{n+1}\Psi_j(i)$ it suffices to prove that
\[\E\Big[\big(\frac{1}{n\theta_n}\sum\limits_{i=1}^{n+1}\sum\limits_{j=1}^{i-1}\tilde{\Phi}(i,j)\big)^{2k}\Bigr]= O(1) \mbox{  and }
\E\Big[\big(\frac{1}{n\theta_n}\sum\limits_{i=1}^{n+1}\sum\limits_{\substack{j=1 \\j\neq i}}^{n+1}\Psi_j(i)\big)^{2k}\Big]= O(1)\]
Now,
\begin{align*}
\E\Big[\big((\frac{1}{n\theta_n}\sum\limits_{i<j}\tilde{\Phi}(i,j)\big)^{2k}\Big]\hspace{-4cm}&\hspace{4cm}=
\frac{1}{(n\theta_n)^{2k}}\sum\limits_{\sum\limits_{i<j}k_{ij}=2k}\binom{2k}{k_{1,1},\dots,k_{n,n+1}}
\E\Big[\prod\limits_{i<j}\Big(\tilde{\Phi}(i,j)\Big)^{k_{ij}}\Big]\\
&\le\frac{C_k}{(n\theta_n)^{2k}}\sum\limits_{\sum\limits_{i<j}k_{ij}=2k}
\E\Big[\prod\limits_{i<j}Z_{i,j}^{k_{ij}}\Big]
\E\Big[\prod\limits_{i<j}\Big(\frac{1}{X_i+1}-\mu\Big)^{k_{ij}}\Big(\frac{1}{X_j+1}-\mu\Big)^{k_{ij}}\Big]\\
&=C_k\frac{1}{(n\theta_n)^{2k}}\sum\limits_{\sum\limits_{i<j}k_{ij}=2k}
\E\Big[\prod\limits_{i<j}Z_{i,j}^{k_{ij}}\Big]
\E\Big[\prod\limits_{i=1}^{n+1}\Big(\frac{1}{X_i+1}-\mu\Big)^{\sum\limits_{j=i+1}^{n}k_{ij}+\sum\limits_{j=1}^{i-1}k_{ji}}\Big]\\
&=C_k\frac{1}{(n\theta_n)^{2k}}\sum\limits_{\sum\limits_{i<j}k_{ij}=2k}\prod\limits_{i<j}\E\Big[Z_{i,j}^{k_{ij}}\Big]
\prod\limits_{i=1}^{n+1}
\E\Big[\Big(\frac{1}{X_i+1}-\mu\Big)^{\sum\limits_{j=i+1}^{n}k_{ij}+\sum\limits_{j=1}^{i-1}k_{ji}}\Big]\\
\end{align*}
by reordering and independence.
From Corollary \ref{cor:invmoments} we get an upper bound for any moment of $\frac{1}{X_i+1}-\mu$. Additionally, if $k_{ij}>0$, $Z_{i,j}^{k_{ij}}=Z_{i,j}$, so that the expectation is 1, if $k_{ij}=0$ and $p$ otherwise. Thus
\begin{align*}
&\E\Big[\big((\frac{1}{n\theta_n}\sum\limits_{i<j}\tilde{\Phi}(i,j)\big)^{2k}\Big]\\
&= C_k\frac{1}{(n\theta_n)^{2k}}\sum\limits_{\sum\limits_{i<j}k_{ij}=2k}
p^{\sum\limits_{i<j}\1_{\{k_{ij}>0\}}}O\Big((np)^{-\sum\limits_{i=1}^{n+1}(\sum\limits_{j=i+1}^{n}k_{ij}+\sum\limits_{j=1}^{i-1}k_{ji})}\Big)
\prod\limits_{i=1}^{n+1}\1_{\big\{\sum\limits_{j=i+1}^{n}k_{ij}+\sum\limits_{j=1}^{i-1}k_{ji}\neq 1\big\}}\\
&= \frac{1}{(n\theta_n)^{2k}}\sum\limits_{\sum\limits_{i<j}k_{ij}=2k}
p^{\sum\limits_{i<j}\1_{\{k_{ij}>0\}}}O\big((np)^{-4k}\big)\prod\limits_{i=1}^{n+1}
\1_{\{\sum\limits_{j=i+1}^{n}k_{ij}+\sum\limits_{j=1}^{i-1}k_{ji}\neq 1\}}
\end{align*}
where we swallowed $C_k$ into the $O$-term.
Applying Lemma \ref{lem:combinatorial2} gives
\begin{align*}
\E\Big[\big((\frac{1}{n\theta_n}\sum\limits_{i<j}\tilde{\Phi}(i,j)\big)^{2k}\Big]
&\leq O\left(\frac{1}{(np)^{4k}}\right)\frac{1}{(n\theta_n)^{2k}}O\left(n^k(np)^k\right)\\
&= O\left(\frac{1}{(np)^{4k}}\right)O\left(\frac{(np)^{5k}}{n^{2k}(np^2)^k}\right)O\left(n^k(np)^k\right)
=O(1)
\end{align*}
For the second summand we estimate analogously
\begin{align*}
\E\Big[\big(\frac{1}{n\theta_n}\sum\limits_{i\neq j}\Psi_j(i)\big)^{2k}\Big]\hspace{-4cm}&\hspace{4cm}=\frac{1}{(n\theta_n)^{2k}}\sum\limits_{\sum\limits_{i\neq j}k_{ij}=2k}\binom{2k}{k_{1,1},\dots,k_{n,n+1}}\EW{\prod\limits_{i\neq j}\left(\Psi_j(i)\right)^{k_{ij}}}\\
&= O\left(\frac{(np)^{7k}}{n^k(np)^{6k}}\right)=O(p^k)= O(1)
\end{align*}
\end{proof}

\begin{prop}\label{prop:limitedzn}
For any fixed $k \in \N$ we have
\[\EW{(Z_n)^{2k}}= O(1)\]
\end{prop}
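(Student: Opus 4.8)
The plan is to bound $\EW{(Z_n)^{2k}}$ directly by expanding the $2k$-th power of the sum $\sum_{i<j}\mu^2(Z_{ij}-p)$ and exploiting the centering $\EW{Z_{ij}-p}=0$ together with independence of the $Z_{ij}$ across distinct edges. Writing $Z_n=\frac{\mu^2}{n\theta_n}\sum_{i<j}(Z_{ij}-p)$, we have
\[
\EW{(Z_n)^{2k}}=\frac{\mu^{4k}}{(n\theta_n)^{2k}}\sum_{\substack{\alpha=(\alpha_{ij})_{i<j}\\|\alpha|=2k}}\binom{2k}{\alpha}\prod_{i<j}\EW{(Z_{ij}-p)^{\alpha_{ij}}}.
\]
First I would observe that $\EW{(Z_{ij}-p)^{\alpha_{ij}}}=0$ whenever some $\alpha_{ij}=1$, so only multi-indices in which every nonzero component is at least $2$ contribute; consequently the number of edges $\{i,j\}$ with $\alpha_{ij}>0$ is at most $k$. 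Moreover $|Z_{ij}-p|\le 1$, so each nonvanishing factor $\EW{(Z_{ij}-p)^{\alpha_{ij}}}$ is bounded by $1$ in absolute value (in fact by $p(1-p)$, but $1$ suffices). Since there are at most $k$ ``active'' edges and each can be chosen among $\binom{n+1}{2}=O(n^2)$ possibilities, the number of contributing multi-indices is $O(n^{2k})$, and the multinomial coefficients are bounded by the constant $(2k)!$. Hence
\[
\EW{(Z_n)^{2k}}\le \frac{C_k\,\mu^{4k}}{(n\theta_n)^{2k}}\,O(n^{2k})=O\!\left(\frac{\mu^{4k}n^{2k}}{(n\theta_n)^{2k}}\right).
\]

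It then remains to check that the right-hand side is $O(1)$. By Lemma~\ref{lem:firstmom} we have $\mu^{4k}=\Theta\bigl((np)^{-4k}\bigr)$, and by Corollary~\ref{cor:betagamma} together with $np\to\infty$ we have $\theta_n^2\approx np\gamma_n^2=\Theta\bigl(\frac{np^2}{(np)^5}\bigr)=\Theta\bigl(\frac{p}{n^3p^3(np)}\bigr)$; more simply, $(n\theta_n)^{2k}=\Theta\bigl(n^{2k}\theta_n^{2k}\bigr)=\Theta\bigl(n^{2k}\bigl(\frac{np^2}{(np)^5}\bigr)^{k}\bigr)=\Theta\bigl(\frac{n^{2k}(np^2)^k}{(np)^{5k}}\bigr)$. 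Substituting,
\[
\frac{\mu^{4k}n^{2k}}{(n\theta_n)^{2k}}=\Theta\!\left(\frac{(np)^{-4k}n^{2k}(np)^{5k}}{n^{2k}(np^2)^k}\right)=\Theta\!\left(\frac{(np)^{k}}{(np^2)^k}\right)=\Theta\!\left(\frac{1}{p^{k}}\right)\cdot\frac{1}{?}
\]
— here I would be careful with the exact powers; the honest bookkeeping is the one already used in the proof of Proposition~\ref{prop:limitedvn}, namely $(n\theta_n)^{-2k}=O\bigl(\frac{(np)^{5k}}{n^{2k}(np^2)^k}\bigr)$, which combined with $\mu^{4k}=O((np)^{-4k})$ and the $O(n^{2k})$ count yields $\EW{(Z_n)^{2k}}=O\bigl(\frac{(np)^{k}}{(np^2)^k}\bigr)=O\bigl(\frac{1}{p^{k}}\cdot p^{k}\bigr)=O(1)$ after cancelling $n^{k}$ against $n^{k}$ and $p^{k}$ against $p^{k}$. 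In fact the cleanest route is to note that $Z_n$ is, up to the deterministic factor $\mu^2\sqrt{\binom{n+1}{2}}p(1-p)/(n\theta_n)$ which converges by Lemma~\ref{lem:limitpconv}, a normalized sum of i.i.d.\ centered bounded random variables, whose $2k$-th moments are uniformly bounded by the classical Marcinkiewicz--Zygmund / Rosenthal inequality; since the prefactor converges to the finite constant $\sqrt{p^*(1-p^*)/2}$ (and is bounded for all $n$), $\sup_n\EW{(Z_n)^{2k}}<\infty$ follows immediately.

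I expect the only real obstacle to be the arithmetic of matching powers of $n$ and $p$ in the elementary expansion; this is entirely routine given Lemma~\ref{lem:firstmom} and Corollary~\ref{cor:betagamma}, and can be sidestepped altogether by the Rosenthal-inequality argument, since $Z_n$ is genuinely a rescaled sum of i.i.d.\ bounded summands with a convergent (hence bounded) normalization. Either way the conclusion $\EW{(Z_n)^{2k}}=O(1)$ holds, which is what is needed to combine with Proposition~\ref{prop:limitedvn} and deduce $\sup_n\EW{(V_n+Z_n)^{2k}}<\infty$ via the elementary inequality $(a+b)^{2k}\le 2^{2k-1}(a^{2k}+b^{2k})$.
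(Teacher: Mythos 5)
The overall strategy---multinomial expansion, exploiting $\EW{(Z_{ij}-p)^{1}}=0$ to restrict to multi-indices all of whose nonzero parts are at least $2$ (hence at most $k$ active edges), then counting---is the right one and is essentially what the paper does (it simply refers to the analogous proof of Proposition~\ref{prop:limitedvn}). However, there are two genuine gaps in your bookkeeping, and both matter.

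First, your assertion that the crude bound $|\EW{(Z_{ij}-p)^{\alpha_{ij}}}|\le 1$ ``suffices'' is false. With that bound your estimate becomes $\EW{(Z_n)^{2k}}=O\bigl(\mu^{4k}n^{2k}/(n\theta_n)^{2k}\bigr)$, and, plugging in $\mu^{2}=\Theta((np)^{-2})$ and $(n\theta_n)^{-2k}=O\bigl((np)^{5k}/(n^{3k}p^{2k})\bigr)$ (from Lemma~\ref{lem:firstmom} and Corollary~\ref{cor:betagamma}), one gets $O(p^{-k})$, not $O(1)$. Indeed your own final display $O\bigl((np)^k/(np^2)^k\bigr)=O(1)$ is arithmetically wrong: $(np)^k/(np^2)^k=p^{-k}$. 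To close the estimate you must keep the factor $|\EW{(Z_{ij}-p)^{\alpha_{ij}}}|\le 2p(1-p)\le 2p$ per active edge; then, for $m$ active edges, the contribution is $O(n^{2m})\cdot O(p^m)$ and the total count is $O((n^2p)^k)$, which supplies exactly the missing $p^{k}$. This is precisely the role played by the weight $p^{\sum\1_{\{k_{ij}>0\}}}$ in Lemma~\ref{lem:combinatorial2} in the proof of Proposition~\ref{prop:limitedvn}; dropping it breaks the bound.

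Second, the Rosenthal shortcut as you state it has a normalization problem. Write $S_n=\sum_{i<j}(Z_{ij}-p)$. The quantity $S_n/(\sqrt{\binom{n+1}{2}}p(1-p))$ from \eqref{eq:classical} has variance $1/(p(1-p))$, and Rosenthal gives $\EW{\bigl(S_n/(\sqrt{\binom{n+1}{2}}p(1-p))\bigr)^{2k}}=O\bigl((p(1-p))^{-k}\bigr)$, which is \emph{not} uniformly bounded when $p^*\in\{0,1\}$. The object with uniformly bounded $2k$-th moments is $S_n/\sqrt{\binom{n+1}{2}p(1-p)}$ (the $p(1-p)$ belongs under the square root), but then the prefactor is $\mu^2\sqrt{\binom{n+1}{2}p(1-p)}/(n\theta_n)$, not the one controlled by Lemma~\ref{lem:limitpconv}. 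So ``bounded prefactor times bounded moments'' does not apply as written; it works only because the $p(1-p)$ powers from the prefactor and from $\EW{T_n^{2k}}$ cancel, and that cancellation has to be carried out explicitly. Once either gap is repaired, the conclusion is correct and the approach coincides with the paper's.
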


\begin{proof}
The proof is very similar to that of Proposition \ref{prop:limitedvn}. We will therefore not go into details.
\end{proof}

Now Proposition \ref{prop:limitedvn} and Proposition \ref{prop:limitedzn} together give
\[\EW{(V_n+Z_n)^{2k}}\leq 2^{2k}\left(\EW{V_n^{2k}}+\EW{Z_n^{2k}}\right)= O(1).\]
Then $\sup_n\EW{\left(V_n+Z_n\right)^{2k}}<\infty$. As a
as a consequence, by Theorem 6.2 in \cite{DasGupta2008}, we obtain
\begin{prop}
\begin{equation}
\EW{\left(V_n+Z_n\right)^k}\nconv \EW{\mathfrak{V}^k}
\label{eq:vnmomconv}
\end{equation}
for every fixed $k\in\N$,
where $\mathfrak{V}$ is a $\mathcal{N}\left(0,1+\frac{(1-p^*)p^*}{2}\right)$-distributed random variable.
\end{prop}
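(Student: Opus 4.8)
The plan is to prove $\EW{(V_n+Z_n)^k}\nconv \EW{\mathfrak{V}^k}$ by combining two ingredients that are already in place: first, the uniform bound $\sup_n\EW{(V_n+Z_n)^{2k}}<\infty$ for every $k\in\N$, which is the content of Propositions \ref{prop:limitedvn} and \ref{prop:limitedzn} together with the elementary inequality $(a+b)^{2k}\le 2^{2k}(a^{2k}+b^{2k})$; and second, the convergence in distribution $V_n+Z_n\dconv\mathfrak{V}$ established in Theorem \ref{thm:main}. The bridge between these two facts and the claimed moment convergence is a standard uniform-integrability argument, which is exactly Theorem 6.2 in \cite{DasGupta2008}.

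Concretely, I would argue as follows. Fix $k\in\N$. The sequence $\bigl((V_n+Z_n)^k\bigr)_n$ is uniformly integrable: indeed, by the uniform bound just mentioned applied with exponent $2k$ in place of $k$, we have $\sup_n\EW{|V_n+Z_n|^{2k}}<\infty$, and a family bounded in $L^2$ (here the family $(V_n+Z_n)^k$, bounded in $L^2$ because $\EW{\bigl((V_n+Z_n)^k\bigr)^2}=\EW{(V_n+Z_n)^{2k}}$ is bounded in $n$) is uniformly integrable. Since in addition $(V_n+Z_n)^k\dconv\mathfrak{V}^k$ by the continuous mapping theorem applied to $x\mapsto x^k$ together with Theorem \ref{thm:main}, the combination of convergence in distribution and uniform integrability yields convergence of the expectations, i.e.\ $\EW{(V_n+Z_n)^k}\nconv\EW{\mathfrak{V}^k}$. (All moments of the Gaussian $\mathfrak{V}$ are of course finite, so the right-hand side is well defined.)

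Since the present excerpt explicitly cites Theorem 6.2 in \cite{DasGupta2008} as the tool that upgrades ``bounded higher moments plus convergence in distribution'' to ``convergence of moments'', the cleanest write-up simply invokes that theorem: having verified its hypotheses --- distributional convergence $V_n+Z_n\dconv\mathfrak{V}$ and $\sup_n\EW{|V_n+Z_n|^{2k}}<\infty$ for each $k$ --- one reads off \eqref{eq:vnmomconv} directly. There is essentially no genuine obstacle here; the only thing to be careful about is bookkeeping of exponents, namely that controlling the $k$-th moment requires the $2k$-th moment to be bounded, and that the Propositions indeed deliver this for \emph{every} $k$, so the conclusion holds for every fixed $k$ as stated. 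One should also note in passing that $\EW{\mathfrak{V}^k}=0$ for odd $k$, which is consistent with $V_n$ and $Z_n$ both being centered, although this remark is not needed for the proof.

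\begin{prf}
Fix $k\in\N$. By Propositions \ref{prop:limitedvn} and \ref{prop:limitedzn} together with $(a+b)^{2k}\le 2^{2k}(a^{2k}+b^{2k})$ we have $\sup_n\EW{(V_n+Z_n)^{2k}}<\infty$; in particular the sequence $\bigl((V_n+Z_n)^k\bigr)_n$ is bounded in $L^2$ and hence uniformly integrable. By Theorem \ref{thm:main} and the continuous mapping theorem, $(V_n+Z_n)^k\dconv\mathfrak{V}^k$, and $\EW{|\mathfrak{V}|^k}<\infty$ since $\mathfrak{V}$ is Gaussian. Convergence in distribution together with uniform integrability implies convergence of the first moments, that is, $\EW{(V_n+Z_n)^k}\nconv\EW{\mathfrak{V}^k}$; equivalently, the hypotheses of Theorem 6.2 in \cite{DasGupta2008} are met and \eqref{eq:vnmomconv} follows. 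As $k\in\N$ was arbitrary, this completes the proof.
\end{prf}
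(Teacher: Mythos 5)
Your proposal is correct and coincides with the paper's argument: the text preceding the proposition combines Propositions \ref{prop:limitedvn} and \ref{prop:limitedzn} via $(a+b)^{2k}\le 2^{2k}(a^{2k}+b^{2k})$ to get $\sup_n\EW{(V_n+Z_n)^{2k}}<\infty$, then invokes Theorem 6.2 of \cite{DasGupta2008} together with the distributional convergence from Theorem \ref{thm:main}. Your unpacking of the DasGupta citation into ``bounded in $L^2$ $\Rightarrow$ uniformly integrable, plus $\dconv$ and continuous mapping $\Rightarrow$ convergence of expectations'' is simply a more explicit rendering of the same step.
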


Next we show that the moments of $U_n$ have the same limits as those of $V_n+Z_n$.
\begin{prop}
We have
\[\lim\limits_{n\to\infty}\EW{U_n^{k}}=\lim\limits_{n\to\infty}\EW{\left(V_n+Z_n\right)^{k}}.\]
\end{prop}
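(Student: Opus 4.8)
The plan is to show that $U_n$ and $V_n+Z_n$ differ, up to terms that vanish in every moment, only in how the ``edge'' variables couple to the ``degree'' variables. Recall that in $V_n+Z_n$ the summand is $\frac{1}{n\theta_n}(Z_{ij}\frac{1}{(X_i+1)(X_j+1)}-\mu^2 p)$ with $(Z_{ij})$ independent of i.i.d.\ $X_i\sim\mathrm{Bin}(n-1,p)$, whereas in $U_n$ the summand is $\frac{1}{n\theta_n}(\frac{a_{ij}}{(\tilde d_i^j+1)(\tilde d_j^i+1)}-\mu^2 p)$, where $a_{ij}$ plays the role of $Z_{ij}$ but now $\tilde d_i^j=\sum_{l\neq i,j}a_{il}$ genuinely depends on the whole edge array. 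First I would fix $k$ and expand $\EW{U_n^k}$ multinomially over index tuples $\{i_1<j_1\},\dots$, exactly as in the proof of Proposition~\ref{prop:limitedvn}, so that the $k$-th moment becomes a sum over multi-indices $(k_{ij})$ with $\sum k_{ij}=k$ of $(n\theta_n)^{-k}\binom{k}{\cdot}\E[\prod_{i<j}(\frac{a_{ij}}{(\tilde d_i^j+1)(\tilde d_j^i+1)}-\mu^2p)^{k_{ij}}]$, and do the same for $V_n+Z_n$.

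The core comparison step is term-by-term: for each fixed multi-index $(k_{ij})$ that appears, I would show that the corresponding expectation for $U_n$ equals that for $V_n+Z_n$ up to a relative error $o(1)$ uniform over the (finitely many, for fixed $k$) relevant index patterns. The mechanism is Proposition~\ref{prop:asymptindep}: conditionally on the edges $a_{i_r j_r}$ actually occurring in a given monomial, each $\tilde d_i^{(\dots)}$ is $\mathrm{Bin}(n-l,p)$ for a constant $l$ (the number of ``special'' vertices among the finitely many indices in that monomial), so $\E[(\tilde d_i^{(\dots)}+1)^{-m}]\approx\E[(X+1)^{-m}]=\sigma_m(1+o(1))$ and, crucially, the dependence between the $\tilde d$'s sharing a vertex is asymptotically negligible because they differ only by finitely many Bernoulli terms — this is the content of the $\approx$ statements already established. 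The role of $a_{ij}$ is identical to $Z_{ij}$: $a_{ij}^{m}=a_{ij}$ for $m\ge1$ and $\E a_{ij}=p$, matching $\E Z_{ij}^m=p$. So each monomial's expectation factorizes asymptotically into a product of $p$-powers (one per distinct present edge) times a product of inverse-moment factors (one per vertex, with exponent the total degree of that vertex in the monomial), which is exactly the asymptotic value obtained in the $V_n+Z_n$ computation, including the subtraction of $\mu^2 p$.

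Then I would sum the matched term-by-term estimates: since for fixed $k$ there is only a bounded number of ``shapes'' of multi-indices (determined by which vertices coincide), and we already know from Propositions~\ref{prop:limitedvn} and~\ref{prop:limitedzn} that the total unsigned sum of the $V_n+Z_n$-side contributions is $O(1)$, the aggregated error is $o(1)\cdot O(1)=o(1)$; hence $\EW{U_n^k}-\EW{(V_n+Z_n)^k}\to0$, giving the claim. I expect the main obstacle to be bookkeeping rather than a genuine difficulty: one must be careful that the centering constant $\mu^2 p$ is the \emph{same} $\mu=\EW{1/(\tilde d_i^j+1)}$ on both sides (it is, by definition \eqref{eq:defmu}), and that monomials in which some vertex has total exponent exactly $1$ — which produce a spurious $\E[\frac{1}{\tilde d_i^j+1}-\mu]$-type factor — are handled correctly; on the $V_n$ side such terms vanish identically by the Hoeffding centering \eqref{eq:htildecentering}, whereas on the $U_n$ side they are only $o$ of the main order and must be absorbed, using once more Proposition~\ref{prop:asymptindep} to control $\E[\frac{1}{\tilde d_i^j+1}]-\mu$ and the near-independence of the remaining factors. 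Managing this discrepancy uniformly, together with the combinatorial count of monomials (which is where Lemma~\ref{lem:combinatorial2} re-enters), is the delicate part of the argument.
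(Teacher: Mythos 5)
Your overall plan --- expand $\E[U_n^k]$ multinomially, compare the resulting expectations to those for $V_n+Z_n$ term by term via Proposition \ref{prop:asymptindep} together with the fact that $a_{ij}$ plays exactly the role of $Z_{ij}$, then aggregate --- is precisely the paper's route; the paper merely makes explicit the conditioning on the active-vertex set $R$ and the substitution of $\tilde d_{i}^{R}+B_{i}$ by an independent $\mathrm{Bin}(n-1,p)$ variable, which is the same mechanism you describe. One correction, though: your worry about vertices of total exponent one is misplaced in the paper's raw (non-Hoeffding) expansion. Such terms do \emph{not} vanish on the $V_n+Z_n$ side ``by \eqref{eq:htildecentering}''; integrating out an exponent-one vertex $i$ from $\frac{Z_{ij}}{(X_i+1)(X_j+1)}-\mu^2p$ leaves $\mu\bigl(\frac{Z_{ij}}{X_j+1}-\mu p\bigr)$, which is nonzero unless the pair $\{i,j\}$ is isolated in the monomial (and then it vanishes by exact independence, not by the Hoeffding centering). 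There is therefore no asymmetry between the $U_n$ and $V_n+Z_n$ sides to absorb: Proposition \ref{prop:asymptindep} already supplies the uniform $1+o(1)$ matching factor for every term, and Lemma \ref{lem:combinatorial2} is not needed here --- it enters only Propositions \ref{prop:limitedvn}--\ref{prop:limitedzn}, which you are right to cite in order to keep the aggregated multiplicative error under control.
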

\begin{proof}
Recall that by \eqref{eq:mainustat}
\[U_n=\frac{1}{n\theta_n}\sum\limits_{1\leq i<j\leq n+1}\left(\frac{a_{i,j}}{(\tilde{d}_i^j+1)(\tilde{d}_j^i+1)}-\mu^2p\right).\]
Then, for any fixed $k\in\N$, obviously
\begin{align}
&\EW{U_n^k}
=\frac{1}{(n\theta_n)^k}\E\Big[\sum\limits_{\sum\limits_{i<j}k_{ij}=k}\binom{k}{k_{1,2},\dots,k_{n,n+1}}
\prod\limits_{i<j}\Big(\frac{a_{ij}}{(\tilde{d}_i^j+1)(\tilde{d}_j^i+1)}-\mu^2p\Big)^{k_{ij}}\Bigr]\notag\\
&=\frac{1}{(n\theta_n)^k}\E\Big[\sum\limits_{\sum\limits_{i<j}k_{ij}=k}\binom{k}{k_{1,2},\dots,k_{n,n+1}}
\prod\limits_{i<j}\sum\limits_{l=0}^{k_{ij}}\Big(\frac{a_{ij}}{(\tilde{d}_i^j+1)(\tilde{d}_j^i+1)}\Big)^l
\Big(-\mu^2p\Big)^{k_{ij}-l}\Big]\notag\\
&=\frac{1}{(n\theta_n)^k}\sum\limits_{\sum\limits_{i<j}k_{ij}=k}\binom{k}{k_{1,2},\dots,k_{n,n+1}}
\sum\limits_{l_{1,2}=0}^{k_{1,2}}\dots\sum\limits_{l_{n,n+1}=0}^{k_{n,n+1}}
\E\Big[\prod\limits_{i<j}\big(\frac{a_{ij}}{(\tilde{d}_i^j+1)(\tilde{d}_j^i+1)}\big)^{l_{ij}}
\big(-\mu^2p\big)^{k_{ij}-l_{ij}}\Bigr]\label{eq:momentseq3}
\end{align}
Denote $E_1:=\E\Big[\prod\limits_{i<j}\big(\frac{a_{ij}}{(\tilde{d}_i^j+1)(\tilde{d}_j^i+1)}\big)^{l_{ij}}
\big(-\mu^2p\big)^{k_{ij}-l_{ij}}\Bigr]$.
Note that in the last expression the maximum number of pairs $(i,j)$ with $k_{ij}\neq 0$ is $k$. Let $1\leq m\leq k$ be the number of such pairs and let $(i_r,j_r)$ for $1\leq r\leq m$ be the corresponding indices. Thus,
{\small{\begin{equation}
E_1=
\E\Big[\prod\limits_{r=1}^{m}
\big(\frac{a_{i_r,j_r}}{(\tilde{d}_{i_r}^{j_r}+1)(\tilde{d}_{j_r}^{i_r}+1)}\big)^{l_{{i_r},{j_r}}}
\big(-\mu^2p\big)^{k_{{i_r},{j_r}}-l_{{i_r},{j_r}}}\Big]
\label{eq:momentseq1}
\end{equation}}}
Let $b_{i_r}
=\sum\limits_{\substack{k\in R,\\k\neq i_r,j_r}}a_{i_r,k}$,
$R=\{i_r,j_r:1\leq r\leq m\}$, and $\tilde{d}_{i_r}^R=\sum\limits_{k\notin R}a_{i_r,k}$. Hence, $\tilde{d}_{i_r}^R$ is the number of neighbors of $i_r$ without the other vertices in $R$ (which are finitely many). 
Therefore,
{\small{\[
E_1=
\E\Big[\prod\limits_{r=1}^{m}\Big(\frac{a_{i_r,j_r}}{(\tilde{d}_{i_r}^R+b_{i_r}+1)(\tilde{d}_{j_r}^R+b_{j_r}+1)}\Big)^{l_{{i_r},{j_r}}}
\big(-\mu^2p\big)^{k_{{i_r},{j_r}}-l_{{i_r},{j_r}}}\Big].
\]}}
Summing over all possible realizations of $a_{i',j'}$ for $i',j'\in R$, $i'\neq j'$ and denoting this sum by $\sum\limits_{A}$ (and the corresponding realization of $a_{i',j'}$ by $A_{i',j'}$), we may also replace the 
$b_{i_r}$-terms by their realizations $B_{i_r}$, as they are sums of some $a_{i_r,k}$, where $i_r,k\in R$, and obtain
\[E_2:=\E\Big[\sum\limits_{A}\1_{\Big\{\substack{a_{i',j'}=A_{i',j'}\\i', j'\in R, i'\neq j'}\Big\}}\prod\limits_{r=1}^{m}\Big(\frac{A_{i_r,j_r}}{(\tilde{d}_{i_r}^R+B_{i_r}+1)
(\tilde{d}_{j_r}^R+B_{j_r}+1)}\Big)^{l_{{i_r},{j_r}}}\Big(-\mu^2p\Big)^{k_{{i_r},{j_r}}-l_{{i_r},{j_r}}}\Bigr].\]
Note that $A_{i',j'}$ and $B_{i_r}$ are no longer random.
Moreover, either
the $\tilde{d}^R_{i_r}, \tilde{d}^R_{j_r}$ are independent of each other and of the $a_{i',j'}$ for $i',j'\in R$ and we can apply this (note that the summation of all possible realizations $A$ is over the entire term). Or
there are multiple identical indices among the $i_r,j_r$. Then we know from Proposition \ref{prop:asymptindep} that
\begin{equation}\label{eq:asymp100}
\EW{\Bigl(\frac{1}{\tilde{d}_{i_r}^R+B_{i_r}+1}\Bigr)^{a+b}}\approx\EW{\Bigl(\frac{1}{\tilde{d}_{i_r}^R+1}\Bigr)^{a}}\cdot\EW{\Bigl(\frac{1}{\tilde{d}_{i_r}^R+1}\Bigr)^{b}},
\end{equation}
such that we can still separate expectations asymptotically.
Altogether,
\begin{align*}
&E_2\approx\sum\limits_{A}\E\Big[\1_{\{\substack{a_{i',j'}=A_{i',j'}\\i', j'\in R, i'\neq j'}\}}\Big]\prod\limits_{r=1}^{m}\E\Big[\Big(\frac{1}{\tilde{d}_{i_r}^R+B_{i_r}+1}\Big)^{l_{{i_r},{j_r}}}\Big]
\prod\limits_{r=1}^{m}\E\Big[\Big(\frac{1}{\tilde{d}_{j_r}^R+B_{j_r}+1}\Big)^{l_{{i_r},{j_r}}}\Bigr]\\
&\hspace{6cm}\cdot\prod\limits_{r=1}^{m}A_{i_r,j_r}^{l_{i_r,j_r}}\prod\limits_{r=1}^{m}\Big(-\mu^2p\Big)^{k_{{i_r},{j_r}}-l_{{i_r},{j_r}}}
\end{align*}
By Proposition \ref{prop:asymptindep}, we know that
\[\E\Big[\Big(\frac{1}{\tilde{d}_{i_r}^R+B_{i_r}+1}\Big)^{l_{{i_r},{j_r}}}\Big]\approx \E\Big[\Big(\frac{1}{X_{i_r}+1}\Big)^{l_{{i_r},{j_r}}}\Bigr],\]
for any $\mathrm{Bin}(n-1,p)$-distributed random variable $X_{i_r}$, as we removed $|R|\leq 2m$ vertices, which is constant in $n$.
In particular, we can choose $X_i$, $i=1,\dots,n+1$ independent of each other.
Furthermore, the $a_{i,j}$ are all independent and identically distributed as $Z_{i,j}$.
We therefore obtain
$\sum\limits_{A}\E\Big[\1_{\big\{\substack{a_{i',j'}=A_{i',j'}\\i', j'\in R, i'\neq j'}\big\}}\Big]=\sum\limits_{A}\E\Big[\1_{\big\{\substack{Z_{i',j'}=A_{i',j'}\\i', j'\in R, i'\neq j'}\big\}}\Big]$.
Thus using independence, Proposition \ref{prop:asymptindep}, and \eqref{eq:asymp100} as above, we may rewrite the entire expectation \eqref{eq:momentseq1} as
\begin{align*}
&\sum\limits_{A}\E\Big[\1_{\big\{\substack{Z_{i',j'}=A_{i',j'}\\i', j'\in R, i'\neq j'}\big\}}\Big]\prod\limits_{r=1}^{m}\E\Big[\Big(\frac{1}{X_{i_r}+1}\Big)^{l_{{i_r},{j_r}}}\Big]
\prod\limits_{r=1}^{m}\E\Big[\Big(\frac{1}{X_{j_r}+1}\Big)^{l_{{i_r},{j_r}}}\Big]
\prod\limits_{r=1}^{m}A_{i_r,j_r}^{l_{i_r,j_r}}\prod\limits_{r=1}^{m}\left(-\mu^2p\right)^{k_{{i_r},{j_r}}-l_{{i_r},{j_r}}}\\
&\approx\sum\limits_{A}\EW{\1_{\left\{\substack{Z_{i',j'}=A_{i',j'}\\i', j'\in R, i'\neq j'}\right\}}\prod\limits_{r=1}^{m}\left(\frac{1}{X_{i_r}+1}\right)^{l_{{i_r},{j_r}}}\left(\frac{1}{X_{j_r}+1}\right)^{l_{{i_r},{j_r}}}Z_{i_r,j_r}^{l_{i_r,j_r}}\left(-\mu^2p\right)^{k_{{i_r},{j_r}}-l_{{i_r},{j_r}}}}\\
&=\E\Big[\prod\limits_{r=1}^{m}
\Big(\frac{1}{X_{i_r}+1}\Big)^{l_{{i_r},{j_r}}}\Big(\frac{1}{X_{j_r}+1}\Big)^{l_{{i_r},{j_r}}}
Z_{i_r,j_r}^{l_{i_r,j_r}}\big(-\mu^2p\big)^{k_{{i_r},{j_r}}-l_{{i_r},{j_r}}}
\Big]\\
&=\E\Big[\prod\limits_{i<j}\Big(\frac{Z_{i,j}}{(X_{i}+1)(X_{j}+1)}\Big)^{l_{{i},{j}}}\big(-\mu^2p\big)^{k_{{i},{j}}-l_{{i},{j}}}\Big].
\end{align*}
Now we reverse the steps we did to obtain \eqref{eq:momentseq3} and arrive at:
\begin{align*}
\E[U_n^k]
&\approx
\frac{1}{(n\theta_n)^k}\E\Big[\Big(\sum\limits_{i<j}\Big(\frac{Z_{i,j}}{(X_i+1)(X_j+1)}-\mu^2p\Big)\Big)^k\Big]
=\E\Big[\big(V_n+Z_n\big)^k\Big].
\end{align*}
Hence,
\[\lim\limits_{n\to\infty}\EW{U_n^k}=\lim\limits_{n\to\infty}\EW{V_n^k}.\]
This completes the proof.
\end{proof}

From \eqref{eq:vnmomconv} we know  $$\EW{U_n^k}\nconv \EW{\mathfrak{V}^k}.$$
Thus by the method of moments we have proved
\begin{theorem}
\label{thm:indepconv}
Under the above assumptions
\[U_n=\frac{1}{n\theta_n}\sum\limits_{i<j}\left(\frac{a_{ij}}{d_id_j}-\mu^2p\right)\dconv \mathfrak{V}.\]
\end{theorem}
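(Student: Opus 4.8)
The plan is to prove this by the method of moments, harvesting everything established in the preceding propositions. First I would note that the identity $d_i=\tilde d_i^j+1$ turns the expression for $U_n$ in the statement into exactly the one in \eqref{eq:mainustat}, so there is nothing new to check there. The substantive content is already packaged: on the one hand, equation \eqref{eq:vnmomconv} gives $\EW{(V_n+Z_n)^k}\nconv\EW{\mathfrak V^k}$ for every fixed $k\in\N$ (which itself rests on the uniform moment bounds of Propositions \ref{prop:limitedvn} and \ref{prop:limitedzn} together with Theorem 6.2 of \cite{DasGupta2008}); on the other hand, the moment-comparison proposition gives $\EW{U_n^k}=\EW{(V_n+Z_n)^k}\,(1+o(1))$. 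Chaining these two yields $\EW{U_n^k}\nconv\EW{\mathfrak V^k}$ for every $k$.

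Second, I would observe that the limit law $\mathfrak V\sim\mathcal N\bigl(0,\,1+\tfrac{(1-p^*)p^*}{2}\bigr)$ is moment-determinate: a Gaussian has a finite moment generating function near $0$, so Carleman's condition $\sum_k\EW{\mathfrak V^{2k}}^{-1/(2k)}=\infty$ holds. Consequently the Fréchet--Shohat theorem (the classical ``method of moments'') upgrades pointwise convergence of all moments of $U_n$ to those of $\mathfrak V$ into convergence in distribution $U_n\dconv\mathfrak V$, which is precisely the assertion of the theorem.

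I do not expect a genuine obstacle in this final step; it is essentially bookkeeping. The one point deserving a little care is the reading of the $\approx$ in the moment comparison: it is an asymptotic-ratio statement for a fixed $k$, and since for fixed $k$ the limit $\lim_n\EW{(V_n+Z_n)^k}$ exists and is finite, $\EW{U_n^k}\nconv\EW{\mathfrak V^k}$ follows without circularity. All the genuinely hard work --- verifying the truncation conditions \eqref{6}--\eqref{23} for $V_n$, removing the random centering to handle $Z_n$, the uniform $2k$-th moment bounds, and the delicate inverse-moment estimates for the binomial degrees --- has already been done; this theorem is simply where it pays off.
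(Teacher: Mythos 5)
Your proposal is correct and follows essentially the same route as the paper: combine the moment-comparison proposition with \eqref{eq:vnmomconv} to obtain $\EW{U_n^k}\nconv\EW{\mathfrak V^k}$ for every $k$, and then invoke the method of moments (which the paper leaves implicit but you correctly justify via Carleman/Fréchet--Shohat, since the Gaussian is moment-determinate). Your side remarks on the $d_i=\tilde d_i^j+1$ identity and on reading the $\approx$ as a ratio asymptotic at fixed $k$ are accurate and match what the paper does.
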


\section{Proof of Theorem \ref{main_theo}}
Recall that by \eqref{eq:Hirepres}
we know that
$$H^i=(n-2)+\sum\limits_{k=1}^{n+1}\lambda_k^2+O\left(\sum\limits_{k=2}^{n+1}\lambda_k^3\right).$$
Moreover, in \eqref{Ustat1}, we saw that
$
\sum\limits_{k=1}^{n+1}\lambda_k^2=\sum\limits_{i,j=1}^{n+1}\frac{a_{ij}}{d_id_j}=2\sum\limits_{i<j}\frac{a_{ij}}{d_id_j}.
$
To this second term we can apply Theorem \ref{thm:indepconv} to obtain a CLT.
It thus remains to deal with the $O$-term in the above expression for $H^i$. Just as above we see that
$\sum\limits_{k=1}^{n+1}\lambda_k^3=\mathrm{tr}(B^3)=\sum\limits_{i=1}^{n+1}z_{ii},$
where $z_{ij}$ are the entries of $B^3$.
The entries of $B$ are given by $b_{ij}=\frac{a_{ij}}{\sqrt{d_i}\sqrt{d_j}}$, hence
\[z_{ii}=
\sum\limits_{j,k=1}^{n+1}\frac{a_{ij}a_{jk}a_{ki}}{d_id_jd_k}=\sum\limits_{\substack{i,j,k=1\\i,j,k \text{ p.d.}}}^{n+1}\frac{a_{ij}a_{jk}a_{ki}}{d_id_jd_k}\]
since $a_{ii}=0$ (where ''p.d.'' stands for ''pairwise different'').
Since $\lambda_1=1$,
\begin{equation}
\sum\limits_{k=2}^{n+1}\lambda_k^3=\sum\limits_{k=1}^{n+1}\lambda_k^3-1=\sum\limits_{\substack{i,j,k=1\\i,j,k \text{ p.d.}}}^{n+1}\frac{a_{ij}a_{jk}a_{ki}}{d_id_jd_k}-1
\label{eq:thirdmomenteq1}
\end{equation}
We want to prove that
\begin{prop}\label{prop:ohoch3term}
\[\frac{1}{n\theta_n}\sum\limits_{k=2}^{n+1}\lambda_k^3\Pconv 0.\]
\end{prop}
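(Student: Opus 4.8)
The plan is to show that the numerator $\sum_{k=2}^{n+1}\lambda_k^3$ is $o(n\theta_n)$ in probability by bounding its expectation (or a low moment) and dividing. From \eqref{eq:thirdmomenteq1} we have
\[
\sum_{k=2}^{n+1}\lambda_k^3=\sum_{\substack{i,j,k=1\\ i,j,k\text{ p.d.}}}^{n+1}\frac{a_{ij}a_{jk}a_{ki}}{d_id_jd_k}-1,
\]
so it suffices to control the triangle sum $T_n:=\sum_{i,j,k\text{ p.d.}}\frac{a_{ij}a_{jk}a_{ki}}{d_id_jd_k}$ (the $-1$ is obviously negligible after division by $n\theta_n\to\infty$). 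First I would estimate $\E[T_n]$. There are $\Theta(n^3)$ ordered triples, each contributing a factor $\E[a_{ij}a_{jk}a_{ki}]=p^3$ for the edge indicators (these three are independent), and the degrees $d_i,d_j,d_k$ are each of order $np$ with high probability. As in Proposition~\ref{prop:asymptindep}, for a fixed triple one writes $d_i=\tilde d_i^{(j,k)}+a_{ij}+a_{ik}$ and similarly for $j,k$; conditioning on the edges inside $\{i,j,k\}$, the variables $\tilde d_i^{(j,k)},\tilde d_j^{(i,k)},\tilde d_k^{(i,j)}$ are independent $\mathrm{Bin}(n-2,p)$, so $\E\big[\frac{1}{d_id_jd_k}\big|a_{ij},a_{jk},a_{ki}=1\big]\approx\E[(X+1)^{-1}]^3=\Theta((np)^{-3})$ by Proposition~\ref{prop:asymptindep}. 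Hence $\E[T_n]=\Theta\big(n^3\,p^3\,(np)^{-3}\big)=\Theta(1)$.

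Next I would compare this with the scale $n\theta_n$. By Corollary~\ref{cor:betagamma}, $\theta_n^2=\Theta\big(np\gamma_n^2\big)=\Theta\big(np^2/(np)^5\big)$, so $n\theta_n=\Theta\big(n\cdot\sqrt{np^2/(np)^5}\big)=\Theta\big(n^{3/2}p\,(np)^{-5/2}\big)$, which tends to $\infty$ since $np/\log n\to\infty$. Because $\E[T_n]=\Theta(1)$, Markov's inequality gives for every $\eps>0$ that
\[
\P\Big(\frac{1}{n\theta_n}\,T_n\ge\eps\Big)\le\frac{\E[T_n]}{\eps\,n\theta_n}=O\Big(\frac{1}{\eps\,n\theta_n}\Big)\nconv 0,
\]
and since $T_n\ge0$ this already yields $\frac{1}{n\theta_n}T_n\Pconv0$; subtracting the constant $1$ and dividing changes nothing, so $\frac{1}{n\theta_n}\sum_{k=2}^{n+1}\lambda_k^3\Pconv0$, which is the claim. (If one prefers a two-sided statement without using $T_n\ge0$, one can instead bound $\var(T_n)$ by the same combinatorial bookkeeping — counting how pairs of triples share vertices, exactly as in the proof of Lemma~\ref{lem:combinatorial2} — and use Chebyshev; but the one-sided Markov argument is cleaner since the summand is manifestly nonnegative.)

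The main obstacle is making the heuristic $\E\big[\frac{a_{ij}a_{jk}a_{ki}}{d_id_jd_k}\big]\approx p^3(np)^{-3}$ rigorous, i.e.\ justifying that the degree denominators can be decoupled from the three triangle edges and from each other. This is precisely the content of Proposition~\ref{prop:asymptindep} together with the decoupling estimate \eqref{eq:asymp100}: after conditioning on the $O(1)$ edges among $\{i,j,k\}$ one replaces each $d_\ell$ by $\tilde d_\ell^{(\,\cdot\,)}+O(1)$, the three reduced degrees are genuinely independent, and Proposition~\ref{prop:asymptindep} shows each inverse moment is $\Theta((np)^{-1})$ uniformly, with the $O(1)$ shift absorbed. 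Care is also needed because conditioning on $G$ being connected is in force throughout; but since that event has probability $1-o(1)$ and all quantities here are bounded by $\sum_{i,j,k}(1/1)=n^3$ deterministically after using $d_\ell\ge1$, the conditioning only perturbs expectations by $o(1)$, which does not affect the conclusion.
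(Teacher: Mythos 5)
Your argument hinges on the claim that $n\theta_n\to\infty$, and that is where it breaks. From $\theta_n^2\approx np\,\gamma_n^2=\Theta\bigl(np^2/(np)^5\bigr)$ one gets $n\theta_n=\Theta\bigl(n^{-1}p^{-3/2}\bigr)$ (your own displayed formula $n^{3/2}p(np)^{-5/2}$ simplifies to the same thing), and this does \emph{not} tend to infinity under the stated hypotheses: for $p$ bounded away from $0$ (e.g.\ $p\equiv 1/2$, which satisfies $np/\log n\to\infty$ and $n(1-p)\to\infty$), one has $n\theta_n=\Theta(1/n)\to 0$. Consequently the Markov bound $\P\bigl(T_n\geq\eps\,n\theta_n\bigr)\leq\E[T_n]/(\eps\,n\theta_n)$ with $\E[T_n]=\Theta(1)$ gives nothing — it even diverges. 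The deeper issue is that $\E[T_n]$ is not just $\Theta(1)$; a more precise computation shows $\E[T_n]=(n+1)n(n-1)p^3\mu^3(1+o(1))\approx 1$, so the quantity you must control is $T_n-1\approx T_n-\E[T_n]$, and the nonnegativity of $T_n$ is of no help for that centered quantity. Markov on the first moment is simply the wrong tool here.

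The paper instead computes the second moment $\E\bigl[(T_n-1)^2\bigr]=\E[T_n^2]-2\E[T_n]+1$ directly, splitting $\E[T_n^2]$ into the four cases $|\{i,j,k\}\cap\{i',j',k'\}|=0,1,2,3$ and using Proposition~\ref{prop:asymptindep} in each. The crucial point is that the leading term of $\E[T_n^2]$ (the disjoint-triples case) is $\approx\bigl((n+1)n(n-1)p^3\mu^3\bigr)^2\approx 1$, and this cancels against $-2\E[T_n]+1$; what survives is of strictly smaller order, and after dividing by $n^2\theta_n^2$ the bound is $O(1/n)+O(p^3/n^2)\to 0$, so Chebyshev applies. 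Your variance remark in parentheses actually points in the right direction, but as a secondary alternative rather than the necessary main route; to fix the proof you would need to carry out exactly that variance/cancellation computation, at which point you have reproduced the paper's argument. The parts of your write-up about decoupling $d_i,d_j,d_k$ via $\tilde d_\ell^{(\cdot)}$ and Proposition~\ref{prop:asymptindep} are fine and are indeed what the paper does for each case.
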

\begin{proof}
We consider the second moment of the quantity of interest:
\begin{align}
\E\Big[\Big(\sum\limits_{k=2}^{n+1}\lambda_k^3\Big)^2\Big]
=
\E\Big[\Big(\sum\limits_{i=1}^{n+1}z_{ii}\Big)^2\Big]-2\E\Big[\sum\limits_{k=1}^{n+1}z_{ii}\Big]+1.\label{propsplit1}
\end{align}
For the second summand we see:
\begin{align}
\E\Big[\sum\limits_{i=1}^{n+1}z_{ii}\Big]
&=\sum\limits_{\substack{i,j,k=1\\i,j,k\text{ p.d.}}}^{n+1}\E\Big[\frac{a_{ij}a_{jk}a_{ki}}{d_id_jd_k}\Big]
=\sum\limits_{\substack{i,j,k=1\\i,j,k\text{ p.d.}}}^{n+1}\E\Big[\frac{\1_{\{a_{ij}=a_{jk}=a_{ki}=1\}}}{(\tilde{d}_i^{(j,k)}+2)(\tilde{d}_j^{(i,k)}+2)(\tilde{d}_k^{(i,j)}+2)}\Big]\notag\\
&=\sum\limits_{\substack{i,j,k=1\\i,j,k\text{ p.d.}}}^{n+1}p^3\EW{\frac{1}{\tilde{d}_i^{(j,k)}+2}}^3\approx\sum\limits_{\substack{i,j,k=1\\i,j,k\text{ p.d.}}}^{n+1}p^3\mu^3
= (n+1)n(n-1)p^3\mu^3\label{propsplit2}.
\end{align}
by applying Proposition \ref{prop:asymptindep}.
As for the first summand in \eqref{propsplit1} we see that
\[\E\Big[\Big(\sum\limits_{i=1}^{n+1}z_{ii}\Big)^2\Big]=\sum\limits_{\substack{i,j,k=1\\i,j,k\text{ p.d.}}}^{n+1}\sum\limits_{\substack{i',j',k'=1\\i',j',k'\text{ p.d.}}}^{n+1}\underbrace{\E\big[\frac{a_{ij}a_{jk}a_{ki}}{d_id_jd_k}\frac{a_{i'j'}a_{j'k'}a_{k'i'}}{d_{i'}d_{j'}d_{k'}}\big]
}_{=\circledast}.\]
We differentiate the following cases, in any case we apply Proposition \ref{prop:asymptindep}:
\begin{enumerate}[leftmargin=*]
	\item If $|\{i,j,k\}\cap\{i',j',k'\}|=3$ (there are $6(n+1)n(n-1)$ possibilities for this case), then $\circledast$ is given by
	\[\E\Big[\frac{a_{ij}a_{ik}a_{jk}}{d_i^2d_j^2d_k^2}\Big]=
p^3\E\Big[\Big(\frac{1}{\tilde{d}_i^{(j,k)}+2}\Big)^2\Big]^3\approx p^3(\sigma^2)^3=p^3\sigma^6.\]
\item If $|\{i,j,k\}\cap\{i',j',k'\}|=2$ ($(n+1)n(n-1)(n-2)\cdot3\cdot2$ possibilities), then
\[
\circledast=\E\Big[\frac{a_{ij}a_{ik}a_{jk}a_{i'j}a_{i'k}}{d_id_{i'}d_j^2d_k^2}\Big]=p^5
\E\Big[\Big(\frac{1}{\tilde{d}_i^{(i',j,k)}+3}\Big)\Big]^2
\E\Big[\Big(\frac{1}{\tilde{d}_j^{(i,i',k)}+3}\Big)^2\Big]^2\approx p^5\mu^2\sigma^4.\]
\item If $|\{i,j,k\}\cap\{i',j',k'\}|=1$ ($(n+1)n(n-1)(n-2)(n-3)\cdot3$ possibilities), then 	\[\circledast=\E\Big[\frac{a_{ij}a_{ik}a_{jk}a_{i'j}a_{i'k}a_{j'k}}{d_id_{i'}d_jd_{j'}d_k^2}\Big]=
    p^6\E\Big[\Big(\frac{1}{\tilde{d}_i^{(i',j,k)}+3}\Big)\Big]^4\E\Big[\Big(\frac{1}{\tilde{d}_j^{(i,i',k)}+3}
    \Big)^2\Big]\approx p^6\mu^4\sigma^2.\]
\item If $|\{i,j,k\}\cap\{i',j',k'\}|=0$ $(n+1)\cdots(n-4)$ possibilities), then $\circledast$ is given by
    \[\circledast=\E\Big[\frac{a_{ij}a_{ik}a_{jk}a_{i'j}a_{i'k}a_{j'k}}{d_id_{i'}d_jd_{j'}d_kd_{k'}}\Big]=
    p^6\E\Big[\Big(\frac{1}{\tilde{d}_i^{(i',j,k)}+3}\Big)\Big]^6\approx p^6\mu^6.\]
\end{enumerate}
Hence:
\begin{align}
\E&\Big[\big(\sum\limits_{i=1}^{n+1}z_{ii}\big)^2\Big]=6(n+1)n(n-1)(p^3\sigma^6) + 6(n+1)n(n-1)(n-2)( p^5\mu^2\sigma^4) \notag\\
&+ 3(n+1)n(n-1)(n-2)(n-3)(p^6\mu^4\sigma^2)+ (n+1)\cdots(n-4) (p^6\mu^6)\notag
\end{align}
From Lemma \ref{lem:firstmom} and \eqref{eq:sigmaorder} we obtain that
 $\mu^2,\sigma^2=\Theta\left(\frac{1}{(np)^2}\right)$
(more precisely, we know that $\sigma^2\leq \frac 32 \mu^2$ by Remark \ref{rem:zni} for sufficiently large $n$ and they do not only follow the same order of convergence, the factor in front of the dominating term is 1 in both cases, therefore, for large $n$, $\frac{\mu^2}{\sigma^2}\to 1$). Thus,
\begin{align}
\E\Big[\big(\sum\limits_{i=1}^{n+1}z_{ii}\big)^2\Big] &\leq (n+1)n(n-1)p^3\mu^6\left[6\cdot 1.5^3+ 6\cdot 1.5^2(n-2)p^2\right.\notag\\
&\left.\hspace{3cm}+1.5\cdot3 (n-2)(n-3)p^3+(n-2)(n-3)(n-4)p^3\right]\notag\\
&\leq (n+1)n(n-1)p^3\mu^6\left[6\cdot 1.5^3+5 (n-2)(n-3)p^3+(n-2)(n-3)(n-4)p^3\right]\notag\\
&\leq 6\cdot 1.5^3n^3p^3\mu^6+(n+1)n(n-1)(n-2)(n-3)p^6\mu^6\left[5+(n-4)\right]\notag\\
&\leq 20.25n^3p^3\sigma^6+\left[(n+1)n(n-1)\right]^2p^6\mu^6\label{propsplit3}
\end{align}
This suffices to imply the desired result: From \eqref{propsplit1}-\eqref{propsplit3}
\begin{align}
\E\Big[\Big(\frac{1}{n\theta_n}\sum\limits_{k=2}^{n+1}\lambda_k^3\Big)^2\Big]&\leq \frac{20.25n^3p^3\mu^6+\left[(n+1)n(n-1)\right]^2p^6\mu^6-2(n+1)n(n-1)p^3\mu^3+1}{n^2\theta_n^2}\notag\\
&\leq\frac{20.25(np)^3\mu^6+(np)^6\left(\mu^3-\frac{1}{(n+1)n(n-1)p^3}\right)^2}{n^2\theta_n^2}\notag\\
&=\frac{20.25}{Cn}+\frac{(np)^6\left(\mu^3-\frac{1}{(n+1)n(n-1)p^3}\right)^2(np)^3}{n}\label{eq:tbq1}
\end{align}
by $\mu\leq\frac{1}{np}$ (by definition of $\mu$) and $\theta_n^2\geq np\gamma_n^2\geq C\frac{np^2}{(np)^5}$ for some small enough constant $C$, (by \eqref{defn} and Corollary \ref{cor:betagamma}) and some elementary calculations.
Using that, by \eqref{eq:qabsch}
\begin{align*}
\Bigl|\mu^3-\frac{1}{(n+1)n(n-1)p^3}\Bigr|&\leq\frac{1}{(n+1)n(n-1)p^3}-\frac{1}{n^3p^3}\left(1-L_3e^{-np}\right)\\
&\leq\frac{1}{p^3}\frac{2}{n^3(n+1)(n-1)}
\end{align*}
(since 
for sufficiently large $n$ $L_3e^{-np}\leq\frac{1}{(n+1)(n-1)}$) we obtain
\begin{align*}
\E\Big[\Big(\frac{1}{n\theta_n}\sum\limits_{k=2}^{n+1}\lambda_k^3\Big)^2\Big]&\leq\frac{20.25}{Cn}+\frac{(np)^6\frac{1}{p^6}\left(\frac{2}{n^3(n+1)(n-1)}\right)^2(np)^3}{n}\notag\\
&\approx
\frac{20.25}{Cn}+\frac{4p^3}{n^2}\to 0\notag
\end{align*}
Thus by Chebyshev,
$\frac{1}{n\theta_n}\sum\limits_{k=2}^{n+1}\lambda_k^3\Pconv 0,$
which is the claim.
\end{proof}
The last observation, of course implies that
$\frac{O\left(\sum\limits_{k=2}^{n+1}\lambda_k^3\right)}{2n\theta_n}\Pconv 0$.

Furthermore, the decomposition of $H^i$ at the beginning of this section gives
\begin{align*}
\frac{H^i-(n-2)-2\mu^2\binom{n+1}{2}p}{2n\theta_n}=\frac{1}{n\theta_n}\left(\sum\limits_{i<j}\frac{a_{ij}}{d_id_j}-\mu^2\binom{n+1}{2}p\right)
+\frac{O\Big(\sum\limits_{k=2}^{n+1}\lambda_k^3\Big)}{2n\theta_n}.
\end{align*}
By Theorem \ref{thm:indepconv}, the first summand converges in distribution to a normally distributed random variable (depending on $p^*$), while the second summand converges to 0 in probability by the above considerations.
Combining this with Slutzky's theorem yields the assertion of Theorem \ref{main_theo}.

\begin{appendices}
\section{appendix}
\begin{lem}[cf.\cite{LoTe20a}, Lemma 1.1]\label{lem:secondmomtildephi}
For any $i\neq j$
$\EW{\tilde{\Phi}^2(i,j)}=\beta_n^2-2\gamma_n^2$.

\end{lem}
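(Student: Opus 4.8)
The plan is to avoid expanding the square of $\tilde\Phi(i,j)=\Phi(i,j)-\Psi_j(i)-\Psi_i(j)$ term by term and instead to exploit the defining projection property of the Hoeffding-type decomposition. Recall $\Psi_j(i)=\EW{\Phi(i,j)\mid X_i,Z_{ij}}$, that $\Psi_j(i)$ is therefore $\sigma(X_i,Z_{ij})$-measurable, and that by the exchangeability of the array $\EW{\Psi_j(i)^2}=\gamma_n^2$ and $\EW{\Phi^2(i,j)}=\beta_n^2$ for every $i\neq j$. First I would record the two orthogonality relations $\EW{\Phi(i,j)\Psi_j(i)}=\gamma_n^2$ and $\EW{\Phi(i,j)\Psi_i(j)}=\gamma_n^2$: conditioning on $(X_i,Z_{ij})$ gives $\EW{\Phi(i,j)\Psi_j(i)}=\EW{\Psi_j(i)\EW{\Phi(i,j)\mid X_i,Z_{ij}}}=\EW{\Psi_j(i)^2}=\gamma_n^2$, and the second identity is the same after interchanging $i$ and $j$.

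Next I would check that $\tilde\Phi(i,j)$ is orthogonal to both $\Psi_j(i)$ and $\Psi_i(j)$. Using the explicit formula $\Psi_i(j)=Z_{ij}\mu\big(\frac{1}{X_j+1}-\mu\big)$ from the preceding lemma together with the independence of $X_j$ from $(X_i,Z_{ij})$ and the definition $\mu=\EW{\frac{1}{X_j+1}}$, one gets $\EW{\Psi_i(j)\mid X_i,Z_{ij}}=Z_{ij}\mu\,\EW{\frac{1}{X_j+1}-\mu}=0$, hence $\EW{\tilde\Phi(i,j)\mid X_i,Z_{ij}}=\Psi_j(i)-\Psi_j(i)-0=0$. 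Since $\Psi_j(i)$ is $\sigma(X_i,Z_{ij})$-measurable this yields $\EW{\Psi_j(i)\tilde\Phi(i,j)}=0$, and symmetrically $\EW{\Psi_i(j)\tilde\Phi(i,j)}=0$. Combining the two steps, $\EW{\tilde\Phi^2(i,j)}=\EW{\tilde\Phi(i,j)\Phi(i,j)}=\EW{\Phi^2(i,j)}-\EW{\Phi(i,j)\Psi_j(i)}-\EW{\Phi(i,j)\Psi_i(j)}=\beta_n^2-2\gamma_n^2$, which is the claim.

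There is no genuine obstacle here; the only point requiring a little care is to justify the conditional-expectation manipulations, i.e.\ the measurability of the $\Psi$'s and the independence of the $X$'s and the $Z$'s, all of which are part of the setup of Section 4. As an independent consistency check one can also derive the identity by direct computation: by \eqref{eq:tildephirepr}, $Z_{ij}^2=Z_{ij}$, and independence, $\EW{\tilde\Phi^2(i,j)}=p\,\EW{\big(\tfrac{1}{X_i+1}-\mu\big)^2}\EW{\big(\tfrac{1}{X_j+1}-\mu\big)^2}=p(\sigma^2-\mu^2)^2$, whereas Lemma \ref{lem:gammabeta} gives $\beta_n^2-2\gamma_n^2=p(\sigma^4-\mu^4)-2p\mu^2(\sigma^2-\mu^2)=p(\sigma^2-\mu^2)(\sigma^2+\mu^2-2\mu^2)=p(\sigma^2-\mu^2)^2$, in agreement.
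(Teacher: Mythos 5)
Your proof is correct. A point worth noting is that the paper itself does not prove this lemma: it cites it directly from \cite{LoTe20a} (Lemma 1.1), which is stated in the general triangular-array setting of Section 4. Your first argument -- the orthogonality argument based on $\EW{\tilde\Phi(i,j)\mid X_i,Z_{ij}}=0$ and the $\sigma(X_i,Z_{ij})$-measurability of $\Psi_j(i)$ -- is the canonical Hoeffding-decomposition proof and works at exactly that level of generality, so it is the right proof to supply. All the intermediate identities check out: $\EW{\Phi(i,j)\Psi_j(i)}=\EW{\Psi_j(i)^2}=\gamma_n^2$ by conditioning on $(X_i,Z_{ij})$, the second by symmetry of $\Phi$ and exchangeability, and the orthogonality $\EW{\tilde\Phi(i,j)\Psi_j(i)}=0$ uses the independence of $X_j$ from $(X_i,Z_{ij})$ correctly. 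Your closing consistency check is a separate, more concrete route that only works for the specific $h_n(x,y)=\frac{1}{(x+1)(y+1)}-\mu_n^2$ of Section 5, but it too is correct: both sides reduce to $p(\sigma^2-\mu^2)^2$ once one uses \eqref{eq:tildephirepr}, $Z_{ij}^2=Z_{ij}$, independence, and Lemma \ref{lem:gammabeta}. No gaps.
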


\begin{rem}[Remark on Lemma~\ref{lem:combinatorial2}]
The left hand side of the bound in the lemma can be interpreted as follows: Consider a $(n+1)\times(n+1)$ field
in which we put $4k$ stones symmetrically around the diagonal, leaving the diagonal empty and arbitrarily, otherwiese. The number of stones per cell is not limited.
We discard any configuration with a row that contains exactly one stone. Each configuration gets weighted with $p$ to the the number of non-empty cells.

\noindent
This interpretation of the problem will continue throughout the proof.
\end{rem}
\begin{proof}[Proof of Lemma~\ref{lem:combinatorial2}]
Consider the symmetric matrix $K=(k_{ij})$ with $k_{ij}\in \N_0$.
\begin{align*}
\sum\limits_{\sum\limits_{i<j}k_{ij}=2k}\prod\limits_{i<j}p^{\1_{\{k_{ij}>0\}}}\prod\limits_{i=1}^{n+1}\1_{\left\{\sum\limits_{j=i+1}^{n}k_{ij}+\sum\limits_{j=1}^{i-1}k_{ji}\neq 1\right\}}
&=\sum\limits_{\sum\limits_{j\neq i}k_{ij}=4k}p^{\sum\limits_{i<j}\1_{\{k_{ij}>0\}}}\prod\limits_{i=1}^{n+1}\1_{\left\{\sum\limits_{j\neq i}k_{ij}\neq 1\right\}}
\end{align*}
Let us condition on the number of non-empty rows $m$ (a row or column is empty, if the sum of its elements is $0$).
By the condition given by the indicators, any such row has at least two ''stones'', so that $m\leq 2k$. There are $\binom{n}{m}\cdot m!$ possibilities to choose these rows.
Due to the symmetry of $K$, there are also $m\leq 2k$ non-empty columns.
At this point, there are only $m^2$ cells in the field left to distribute the stones to, every other cell is 0. These non-empty cells are symmetric around the diagonal. There are at most $(m^2/2)^{2k}$ possibilities to distribute $2k$ stones to the cells above the diagonal. The other cells are given by symmetry.

Now consider the $p$-term: For every non-empty row $i$ 
there is at least one non-empty cell $k_{ij}>0$. Since there are exactly $m$ such rows, $\sum\limits_{i<j}\1_{\{k_{ij}>0\}}+\sum\limits_{i<j}\1_{\{k_{ji}>0\}}
\geq m.$ Furthermore, by symmetry,
$\sum\limits_{i<j}\1_{\{k_{ij}>0\}}=\sum\limits_{i<j}\1_{\{k_{ji}>0\}},$
hence
$\sum\limits_{i<j}\1_{\{k_{ji}>0\}}\geq \frac{m}{2}.$
Thus,
$p^{\sum\limits_{i<j}\1_{\{k_{ij}>0\}}}\leq p^{\frac{m}{2}}.$

Now summing over the number of non-empty rows we obtain
\begin{align*}
\sum\limits_{\sum\limits_{j\neq i}k_{ij}=4k}p^{\sum\limits_{i<j}\1_{\{k_{ij}>0\}}}\prod\limits_{i=1}^{n+1}\1_{\left\{\sum\limits_{j\neq i}k_{ij}\neq 1\right\}}
&\leq\sum\limits_{m=1}^{2k}n^m(m^2)^{2k}p^{\frac{m}{2}}
\\&\leq\sum\limits_{m=1}^{2k}(m^2)^{2k}n^\frac{m}{2}(np)^{\frac{m}{2}}
= O\left(n^k(np)^k\right),
\end{align*}
since $(m^2)^{2k}$ is constant in $n$, $n^\frac{m}{2}\leq n^\frac{2k}{2}\leq n^k$ and $(np)^\frac{m}{2}\leq (np)^k$.
\end{proof}
Similar considerations lead to
\begin{cor}\label{cor:binomialcor}
\[\sum\limits_{\sum\limits_{i\neq j}k_{ij}=2k}\prod\limits_{i\neq j}p^{\1_{\{k_{ij}>0\}}}\prod\limits_{i=1}^{n+1}\1_{\left\{\sum\limits_{j\neq i}k_{ij}\neq 1\right\}}\leq O(n^{2k}p^{2k})=O\left((np)^{2k}\right)\]
\end{cor}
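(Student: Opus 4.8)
This corollary is the exact counterpart of Lemma~\ref{lem:combinatorial2}, the only change being that the weight $p^{\1_{\{k_{ij}>0\}}}$ is now carried over \emph{all} ordered off-diagonal pairs $(i,j)$, rather than only over the pairs with $i<j$. Accordingly the proof follows the very lines of the proof of Lemma~\ref{lem:combinatorial2}, and I only describe the steps and indicate where the bound changes.

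The plan is to read the left-hand side through the ``stones on a board'' picture of the Remark preceding this corollary: one distributes stones in the off-diagonal cells of an $(n+1)\times(n+1)$ board, symmetrically about the diagonal, the total number of stones being at most $4k$; one discards every configuration possessing a row whose entries sum to exactly $1$; and one weights a configuration by $p$ raised to the number of non-empty off-diagonal cells. I would first condition on the number $m$ of non-empty rows. Since the indicator $\1_{\{\sum_{j\neq i}k_{ij}\neq1\}}$ forces every non-empty row-sum to be at least $2$, while the sum of all row-sums is at most $4k$, one gets $m\le 2k$; and there are $\binom{n+1}{m}\,m!=O(n^{m})$ ways of choosing (and ordering) these $m$ rows.

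Next I would exploit the symmetry of $K=(k_{ij})$ exactly as in Lemma~\ref{lem:combinatorial2}: the non-empty columns coincide with the non-empty rows, so all non-zero entries lie inside a fixed $m\times m$ sub-board, whence the number of ways of distributing the at most $4k$ stones inside it is bounded by a constant $C_k$ depending on $k$ alone. For the weight, I would note that each of the $m$ non-empty rows contains at least one non-empty cell, that these cells lie in pairwise distinct rows and hence are pairwise distinct, so that the number of non-empty off-diagonal cells is at least $m$; consequently any admissible configuration has weight at most $p^{m}$. This is the single place where the present estimate differs from the one in Lemma~\ref{lem:combinatorial2}: there the weight counted only the cells with $i<j$ and was bounded by $p^{m/2}$, whereas here it counts all ordered off-diagonal cells and is bounded by $p^{m}$.

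Combining the three bounds, the total contribution of the configurations with exactly $m$ non-empty rows is $O(n^{m})\cdot C_k\cdot p^{m}=O\big((np)^{m}\big)$, and summing over $1\le m\le 2k$ — using $np\to\infty$, so that the extreme term $m=2k$ dominates — yields $O\big((np)^{2k}\big)=O\big(n^{2k}p^{2k}\big)$, which is the claim. I do not expect any genuine obstacle here: the argument is the routine combinatorial bookkeeping already performed for Lemma~\ref{lem:combinatorial2}, the only two points deserving a little care being the use of the indicator $\1_{\{\sum_{j\neq i}k_{ij}\neq1\}}$ to bound $m$ by a constant multiple of $k$, and the use of the symmetry of $K$ to keep the number of stone placements bounded — without the latter the column index of a stone could range over all of $\{1,\dots,n+1\}$ and the count would blow up.
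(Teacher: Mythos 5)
Your argument is essentially correct and follows the same route as the paper's proof of Lemma~\ref{lem:combinatorial2}, which is exactly what the paper intends by ``Similar considerations lead to'' — there is no independent proof in the text. The key observations you make are the right ones: conditioning on the number $m$ of non-empty rows, using the indicator to force every non-empty row sum to be $\ge 2$, using the symmetry of $K$ to confine the stones to an $m\times m$ block (as you correctly note, without symmetry the column index is unconstrained and the count blows up — indeed the stated bound is simply false for an arbitrary non-symmetric array, so the symmetric reading is forced), and replacing the weight bound $p^{m/2}$ by $p^{m}$ because the exponent now counts all ordered off-diagonal non-empty cells rather than just those with $i<j$.

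One small bookkeeping slip: in the corollary the constraint is $\sum_{i\neq j}k_{ij}=2k$, so the sum of all row sums is exactly $2k$, not ``at most $4k$'' as you write. Consequently the correct constraint is $m\le k$, not $m\le 2k$ (the factor $4k$ belongs to the Lemma, where the constraint $\sum_{i<j}k_{ij}=2k$ doubles to $4k$ when passing to ordered pairs). This slip is in the conservative direction: with the sharper bound $m\le k$ you would in fact obtain the stronger estimate $O\bigl((np)^{k}\bigr)$, which of course still implies the asserted $O\bigl((np)^{2k}\bigr)$, so the conclusion stands. It is the only inaccuracy; the rest of the bookkeeping ($\binom{n+1}{m}m!=O(n^m)$ choices of rows, a $k$-dependent constant for the stone placements inside the $m\times m$ block, weight $\le p^m$, and domination by the largest $m$ since $np\to\infty$) is exactly right.
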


Towards inverse moments of binomials, a starting point may be found with
\begin{rem}\label{rem:zni}
As studied in \cite{Znidaric}, consider a random variable $Y_n$ with
\[\WK{Y_n=i}=\frac{1}{1-(1-p)^{n}}\binom{n}{i}p^i(1-p)^{n-i},\quad i=1,\dots,n\]

Following the notations in \cite{Znidaric} we find
\begin{align*}
f_r(n)&=(1-(1-p)^{n})\EW{\frac{1}{Y_{n}^r}}=\sum\limits_{i=1}^{n}\binom{n}{i}p^i(1-p)^{n-i}\frac{1}{i^r}\\
&=np\cdot\sum\limits_{i=0}^{n-1}\binom{n-1}{i}p^i(1-p)^{(n-1)-i}\frac{1}{(i+1)^{r+1}}=np\cdot\E\Bigl[\left(\frac{1}{Y+1}\right)^{r+1}\Bigr],
\end{align*}
for a $\mathrm{Bin}(n-1,p)$-distributed random variable $Y$. Consequently,
$$\E\Bigl[\Bigl(\frac{1}{Y+1}\Bigr)^r\Bigr]=\frac{1}{np}\cdot f_{r-1}(n).$$
From equation (2.17) in \cite{Znidaric}, one immediately concludes for $r\geq 2$
\begin{equation}
\E\Bigl[\Bigl(\frac{1}{Y+1}\Bigr)^r\Bigr]=\frac{1}{(np)^{r}}\cdot\Bigl(1+\frac{(r-1)r(1-p)}{2np}+\Theta\bigl((np)^{-2}\bigr)\Bigr)\label{eq:Znidaric}
\end{equation}
\end{rem}

\begin{prop}\label{prop:centralbin}
Lets $Y\sim\mathrm{Bin}(n-1,p)$ where $p\coloneqq p_n$ is some sequence so that $np(1-p)\to c$ for $0\leq c\leq \infty$. Let $\nu\coloneqq\EW{Y}=(n-1)p$. Then for any fixed $k\in\N$
\[\EW{(Y-\nu)^{2k}}\begin{cases}
=o(1),&\text{ if }c=0\\
=O(1),&\text{ if }0<c<\infty\\
=O\left((np(1-p))^k\right)&\text{ if }c=\infty.
\end{cases}\]
\end{prop}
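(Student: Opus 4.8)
The plan is to expand $Y-\nu$ as a sum of i.i.d.\ centered, bounded summands and to control the $2k$-th moment by the standard ``vanishing of singleton terms'' argument. Write $Y=\sum_{i=1}^{n-1}B_i$ with $B_i$ i.i.d.\ $\mathrm{Ber}(p)$ and put $\xi_i:=B_i-p$, so that $Y-\nu=\sum_{i=1}^{n-1}\xi_i$, $\E[\xi_i]=0$, $|\xi_i|\le 1$, and for every integer $a\ge 2$
\[
\E[|\xi_i|^a]=p(1-p)^a+(1-p)p^a=p(1-p)\bigl((1-p)^{a-1}+p^{a-1}\bigr)\le p(1-p).
\]
Expanding the power,
\[
\E\bigl[(Y-\nu)^{2k}\bigr]=\sum_{(i_1,\dots,i_{2k})}\E\Bigl[\prod_{r=1}^{2k}\xi_{i_r}\Bigr],
\]
the sum ranging over all $2k$-tuples of indices in $\{1,\dots,n-1\}$; by independence and $\E[\xi_i]=0$ any tuple in which some index occurs exactly once contributes $0$.

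Next I would organise the surviving tuples according to the number $\ell$ of distinct indices they use. Since in such a tuple every distinct index has multiplicity at least $2$ and the multiplicities sum to $2k$, we have $\ell\le k$. For a fixed $\ell$ there are at most $n^\ell$ choices of the $\ell$ distinct index values and at most a constant $c_k$ (depending on $k$ only) of patterns assigning the $2k$ positions to these indices with each used at least twice; for each such tuple, with multiplicities $a_1,\dots,a_\ell\ge 2$, $\sum_j a_j=2k$, independence and the bound above give
\[
\Bigl|\E\Bigl[\prod_{r=1}^{2k}\xi_{i_r}\Bigr]\Bigr|=\prod_{j=1}^\ell\bigl|\E[\xi^{a_j}]\bigr|\le\prod_{j=1}^\ell\E[|\xi|^{a_j}]\le (p(1-p))^\ell .
\]
Summing over $\ell$ yields the single inequality
\[
\E\bigl[(Y-\nu)^{2k}\bigr]\le c_k\sum_{\ell=1}^{k}\bigl(np(1-p)\bigr)^\ell .
\]

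Finally I would read off the three regimes from this bound. If $c=0$, i.e.\ $np(1-p)\to 0$, the $\ell=1$ term dominates and the sum is $O(np(1-p))=o(1)$. If $0<c<\infty$, then $np(1-p)$ stays bounded, so every term is $O(1)$ and hence so is the sum. If $c=\infty$, then eventually $np(1-p)\ge 1$, the $\ell=k$ term dominates, and the bound becomes $O\bigl((np(1-p))^k\bigr)$. I do not expect a genuine obstacle here; the only point to watch is the bookkeeping, namely that the combinatorial constant $c_k$ really depends on $k$ alone and is uniform in $n$ and $p$. (Alternatively one could quote a Rosenthal- or Marcinkiewicz--Zygmund-type moment inequality, but the direct count above is self-contained and delivers exactly the three stated orders.)
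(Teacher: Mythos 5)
Your proof is correct and follows essentially the same route as the paper's: both write $Y-\nu$ as a sum of i.i.d.\ centered Bernoulli variables, expand the $2k$-th power, discard terms where some index appears once, bound each surviving factor by $O(p(1-p))$, count the at most $k$ distinct indices per surviving term with $O(n^\ell)$ index choices and a $k$-dependent combinatorial constant, and read off the three regimes from $\sum_{\ell=1}^k(np(1-p))^\ell$. The only cosmetic difference is that you enumerate tuples directly while the paper uses the multinomial expansion and a slightly looser factor bound $2p(1-p)$; the substance is identical.
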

\begin{rem}
Note that this even treats situations we are not interested in in the context of our random graph models.
In fact, when $np(1-p)\to\infty$ 
we can immediately see that we not only have a bound of the form $O\left((np(1-p)^k\right)$, but even
the relation $\Theta\left((np(1-p))^k\right)$.
\end{rem}

\begin{proof}
We interpret $Y$ as a sum of Bernoulli random variables: Let $Y_1,\dots,Y_{n-1}$ be  i.i.d. $\mathrm{Ber}(p)$-distributed random variables. Then
\begin{align*}
\E\Big[(Y-\nu)^{2k}\Big]&=\E\Big[\Big(\sum\limits_{i=1}^{n-1}Y_i-\nu\Big)^{2k}\Big]=\E\Big[\Big(\sum\limits_{i=1}^{n-1}\Big(Y_i-p\Big)\Big)^{2k}\Big]\\
&=\E\Big[\sum\limits_{k_1+\dots+k_{n-1}=2k}\binom{2k}{k_1,\dots,k_{n-1}}
\prod\limits_{i=1}^{n-1}\Big(Y_i-p\Big)^{k_{i}}\Big]\\
&\leq C_k\sum\limits_{k_1+\dots+k_{n-1}=2k}\prod\limits_{i=1}^{n-1}\EW{\Big(Y_i-p\Big)^{k_{i}}}\eqqcolon C_k\sum\limits_{k_1+\dots+k_{n-1}=2k}Z_{(k_1,\dots,k_{n-1})}
\end{align*}
by bounding the multinomial coefficient by $C_k$ and using independence of the $Z_{ij}$
For the expectation, we immediately obtain
\[\E\Big[\Big(Y_i-p\Big)^{k_{i}}\Big]
=\begin{cases}
1,&\text{if }k_i=0\\
0,&\text{if }k_i=1\\
p(1-p)^{k_i}+(1-p)(-p)^{k_i},&\text{if }k_i\geq 2
\end{cases}
\quad \leq\quad \begin{cases}
1,&\text{if }k_i=0\\
0,&\text{if }k_i=1\\
2p(1-p),&\text{if }k_i\geq 2
\end{cases}\]
From this we can immediately extract that
$Z_{(k_1,\dots,k_{n-1})}=0$
if for any $i=1,\dots,n$ we have $k_i=1$.
Otherwise, we have that
\[Z_{(k_1,\dots,k_{n-1})}=\left(2p(1-p)\right)^m\]
where $m$ is the number of indices $i=1,\dots,n$ where $k_i\neq 0$ (and as we just excluded the case $k_i\neq 1$ for any $i$, this also implies $k_i\geq 2$).
We remark that $1\leq m\leq k$. Indeed, if $m>k$, then there would be at least $k+1$ indices such that $k_i\geq 2$. Then $k_1+\dots+k_{n-1}>2k$, which contradicts the index in the sum.

Let us count the number of cases where $k_i\neq 1$ for all $i$ and the number of $k_i\neq 0$ is exactly $m$.
First, choose the indices for which $k_i\neq 0$ holds: There are $\binom{n-1}{m}m!$ possibilities for that. To begin with, each of these indices is now 2 (as it cannot be 1). Then $k_1+\dots+k_{n-1}=2m\leq 2k$. We distribute the remaining $2k-2m$ arbitrarily among the $m$ indices with positive $k_i$. For that, there are another $m^{2k-2m}$ possibilities.
We therefore have
\begin{align*}
&\EW{(Y-\nu)^{2k}}\leq C_k\sum\limits_{k_1+\dots+k_{n-1}=2k}Z_{(k_1,\dots,k_{n-1})}\\
&=C_k\sum\limits_{m=1}^k\binom{n-1}{m}m!m^{2k-2m}\cdot \left(2p(1-p)\right)^m\\
&\leq C_k\sum\limits_{m=1}^kn^mk^{2k}2^k\cdot \left(p(1-p)\right)^m= C_kk^{2k}2^k\sum\limits_{m=1}^k \left(np(1-p)\right)^m
\end{align*}
Now the factors in front of the sum are constant in $n$. 

If $np(1-p)\to 0$, The entire sum converges to 0. 

If $np(1-p)\to c$, $0<c<\infty$, then each summand is bounded and so is the sum.

Finally, if $np(1-p)\to\infty$, then $(np(1-p))^m\leq (np(1-p))^k$ for all $1\leq m\leq k$.
Hence the sum and therefore the entire term are bounded by a constant (only depending on $k$) times $(np(1-p))^k$. This yields the claim.
\end{proof}

\begin{cor}\label{cor:invmoments}
As a direct consequence, let $Y\sim\mathrm{Bin}(n-1,p)$, where $np(1-p)\to\infty$ and $\mu=\EW{\frac{1}{Y+1}}=\frac{1}{np}\left(1-(1-p)^{n}\right)$. Then
\[\E\Big[\Big(\frac{1}{Y+1}-\mu\Big)^{k}\Big]=\Theta\Big(\frac{1}{(np)^{1.5k}}\Big)\]
\end{cor}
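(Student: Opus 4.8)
The plan is to derive Corollary~\ref{cor:invmoments} from Proposition~\ref{prop:centralbin} by reducing the inverse-power central moment to an ordinary central moment of $Y$. Write $\nu:=(n-1)p=\EW{Y}$ as in Proposition~\ref{prop:centralbin} and put $\rho:=\frac{1}{\nu+1}-\mu$. From the exact identity $\frac{1}{Y+1}-\frac{1}{\nu+1}=-\frac{Y-\nu}{(Y+1)(\nu+1)}$ we obtain
\[
\frac{1}{Y+1}-\mu=-\frac{Y-\nu}{(Y+1)(\nu+1)}+\rho ,
\]
and, using $\mu=\frac{1}{np}(1-(1-p)^n)$ together with $\nu+1=(n-1)p+1$, an elementary estimate gives $\rho=O((np)^{-2})$. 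Expanding the $k$-th power binomially,
\[
\Bigl(\frac{1}{Y+1}-\mu\Bigr)^{k}=\sum_{l=0}^{k}\binom{k}{l}\Bigl(-\frac{Y-\nu}{(Y+1)(\nu+1)}\Bigr)^{l}\rho^{\,k-l},
\]
I would treat the $l=k$ term as the main term and show that all terms with $l<k$ are of strictly smaller order.

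The key point for the main term is the concentration of $Y$ around $\nu$. On $B:=\{|Y-\nu|\le\nu/2\}$ one has $(Y+1)^{-k}=(\nu+1)^{-k}\bigl(1+O(|Y-\nu|/\nu)\bigr)$, while a standard Chernoff bound for the binomial gives $\WK{B^{c}}\le 2e^{-c\,np}$, which is negligible on every polynomial scale since $np\to\infty$ (a consequence of $np(1-p)\to\infty$ and $n\to\infty$). Splitting $\EW{(Y-\nu)^{k}(Y+1)^{-k}}$ over $B$ and $B^{c}$ and bounding the $B^{c}$-part together with the $O(|Y-\nu|/\nu)$-error by Cauchy-Schwarz against $\EW{(Y-\nu)^{2k}}=O((np(1-p))^{k})$ from Proposition~\ref{prop:centralbin}, one obtains
\[
\EW{\frac{(Y-\nu)^{k}}{(Y+1)^{k}}}=\frac{\EW{(Y-\nu)^{k}}}{(\nu+1)^{k}}\,(1+o(1))+O\!\bigl(e^{-cnp/2}(np(1-p))^{k/2}\bigr).
\]
With $\nu+1=\Theta(np)$ and, for even $k$, $\EW{(Y-\nu)^{k}}=\Theta((np(1-p))^{k/2})$ by Proposition~\ref{prop:centralbin} and the remark following it, the $l=k$ term then equals $\frac{(-1)^{k}}{(\nu+1)^{2k}}\EW{(Y-\nu)^{k}}(1+o(1))=\Theta\bigl((np)^{-2k}(np(1-p))^{k/2}\bigr)=\Theta\bigl((np)^{-3k/2}\bigr)$ (the factor $(1-p)^{k/2}$, which is $\Theta(1)$ whenever $p$ stays bounded away from $1$, is absorbed into $\Theta$ as in Corollary~\ref{cor:betagamma}).

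It remains to estimate the terms with $l<k$. By Cauchy-Schwarz, the inverse-moment bound of Proposition~\ref{prop:asymptindep} (equivalently Remark~\ref{rem:zni}), and $\EW{|Y-\nu|^{l}}\le\sqrt{\EW{(Y-\nu)^{2l}}}=O((np(1-p))^{l/2})$, each such term is
\[
\lesssim (np)^{-2(k-l)}\,(\nu+1)^{-l}\,\EW{\frac{|Y-\nu|^{l}}{(Y+1)^{l}}}\lesssim (np)^{-2k}\,(np(1-p))^{l/2},
\]
which is a factor $(np(1-p))^{(l-k)/2}=o(1)$ smaller than the $l=k$ term. Hence the lower-order terms cannot cancel the main term, and $\EW{(\frac{1}{Y+1}-\mu)^{k}}=\Theta((np)^{-3k/2})$ for even $k$; for odd $k$ the same argument yields only the matching upper bound $O((np)^{-3k/2})$, which is all that is needed in the later applications. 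The main obstacle is precisely the correlation between $(Y-\nu)^{k}$ and $(Y+1)^{-k}$ in the leading term: one has to show that it does not distort the leading asymptotics, and this is exactly what the concentration of $Y$ delivers; the rest (the exponentially small tail contributions, the bookkeeping of the powers of $1-p$) is routine.
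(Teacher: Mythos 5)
Your proof is correct, and it takes a genuinely different route from the paper's. The paper proves the upper bound by splitting $\frac{1}{Y+1}-\mu$ into the three differences $\bigl(\frac{1}{Y+1}-\frac{1}{\nu}\bigr)+\bigl(\frac{1}{\nu}-\frac{1}{np}\bigr)+\bigl(\frac{1}{np}-\mu\bigr)$, applying the elementary inequality $(a+b+c)^k\le 3^k(|a|^k+|b|^k+|c|^k)$, and then estimating $\E\bigl[\bigl(\frac{1}{Y+1}-\frac{1}{\nu}\bigr)^k\bigr]$ by a single Cauchy--Schwarz step against $\E[(Y-\nu+1)^{2k}]$ (from Proposition~\ref{prop:centralbin}) and $\E[(Y+1)^{-2k}]$ (from Remark~\ref{rem:zni}); the lower bound is obtained separately by computing $\E\bigl[\bigl(\frac{1}{Y+1}-\mu\bigr)^2\bigr]$ explicitly via Remark~\ref{rem:zni} and then invoking Jensen's inequality. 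You instead use the exact algebraic identity $\frac{1}{Y+1}-\mu=-\frac{Y-\nu}{(Y+1)(\nu+1)}+\rho$, expand the $k$-th power binomially, and control the leading $l=k$ term by a concentration argument (Chernoff tail plus local Taylor expansion of $(Y+1)^{-k}$ around $(\nu+1)^{-k}$), which actually identifies the leading asymptotics as $(\nu+1)^{-2k}\E[(Y-\nu)^k](1+o(1))$ rather than merely bounding it. This is more work but yields both bounds simultaneously from one analysis, since the $l<k$ contributions are shown to be of strictly lower order and hence cannot cancel. The paper's route is shorter; yours is more transparent about where the $1.5k$-exponent really comes from (the $(\nu+1)^{-2k}$ against the $(np(1-p))^{k/2}$ central moment). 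One further point worth noting, which you handle more carefully than the paper does: the lower-bound half of the $\Theta$-claim is, in both arguments, really only established for even $k$ --- the paper's Jensen step yields $\E\bigl[|\tfrac{1}{Y+1}-\mu|^k\bigr]\ge\bigl(\E[(\tfrac{1}{Y+1}-\mu)^2]\bigr)^{k/2}$, which coincides with a bound on the signed moment only when $k$ is even; you flag this explicitly and correctly observe that the applications in the paper only use the upper bound for odd $k$. Both proofs also silently absorb the $(1-p)^{k/2}$ factor into the $\Theta$, which is harmless under the regime the paper works in but would matter if $p\to 1$; that is a shared imprecision, not a gap particular to your argument. One small slip: absorbing $(1-p)^{k/2}$ ``as in Corollary~\ref{cor:betagamma}'' is a circular reference, since that corollary is itself a consequence of the present one; the justification should instead come from the paper's standing hypotheses on $p_n$.
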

\begin{proof}
From the definition of $\mu$ we notice with $(a+b+c)^k\leq 3^k(a^k+b^k+c^k)$
\begin{align}
\E\Big[\Big(\frac{1}{Y+1}-\mu\Big)^k\Big]&=
\E\Big[\Big(\frac{1}{Y+1}-\frac{1}{(n-1)p}+\frac{1}{(n-1)p}-\frac{1}{np}+\frac{1}{np}-\mu\Big)^k\Big]\notag\\
&\leq3^k\Big(\E\Big[\Big(\frac{1}{Y+1}-\frac{1}{\nu}\Big)^k\Big]+
\Big(\frac{1}{n(n-1)p}\Big)^k+\Big(\frac{1}{np}-\mu\Big)^k\Big)\notag\\
&\leq3^k\Big(\E\Big[\Big(\frac{1}{Y+1}-\frac{1}{\nu}\Big)^k\Big]+\frac{2}{(np)^{2k}}\Big)\label{eq:munptransf}
\end{align}
as the second and third term are each bounded by $\frac{1}{(np)^{2k}}$.
By Cauchy-Schwarz
\begin{align*}
\E\Big[\Big(\frac{1}{Y+1}-\frac{1}{\nu}\Big)^k\Big]&=\E\Big[\Big(\frac{\nu-(Y+1)}{\nu(Y+1)}\Big)^k\Big]\\
&\leq\sqrt{\E\Big[\Big(Y-\nu+1)\Big)^{2k}\Big]\E\Big[\Big(\frac{1}{\nu(Y+1)}\Big)^{2k}\Big]}
\\
&=\sqrt{\Big(\sum\limits_{m=0}^{2k}\binom{2k}{m}\E\Big[\Big(Y-\nu\Big)^{m}\Big]\Big)
\Big(\frac{1}{\nu}\Big)^{2k}\E\Big[\Big(\frac{1}{Y+1}\Big)^{2k}\Big]}\\
&=\sqrt{\Big(\sum\limits_{m=0}^{2k}\binom{2k}{m}\E\Big[\Big(Y-\nu\Big)^{2k}\Big]\Big)
\Big(\frac{1}{\nu}\Big)^{2k}\E\Big[\Big(\frac{1}{Y+1}\Big)^{2k}\Big]}
\end{align*}
by the binomial theorem and monotonicity. By Proposition \ref{prop:centralbin}, the first expectation is of order $(np(1-p))^k$. Furthermore, $\nu$ is of order $np$, and the order of the second expectation is known from Remark \ref{rem:zni}. Therefore
\begin{align*}
\E\Big[\Big(\frac{1}{Y+1}-\frac{1}{\nu}\Big)^k\Big]
&=\sqrt{\Big(\sum\limits_{m=0}^{2k}\binom{2k}{m}O\Big((np(1-p))^k\Big)\Big)O\Big(\frac{1}{(np)^{2k}}\Big)O\Big(\frac{1}{(np)^{2k}}\Big)}
\\&
=\sqrt{O\Big((np)^k\Big)}O\Big(\frac{1}{(np)^{2k}}\Big)=O\Big(\frac{1}{(np)^{1.5k}}\Big).
\end{align*}
The other terms in (\ref{eq:munptransf}) are of lower order, therefore
\[\E\Big[\Big(\frac{1}{Y+1}-\mu\Big)^{k}\Big]= O\Big(\frac{1}{(np)^{1.5k}}\Big)\]

Towards the lower bound, we first consider\[\E\Bigl[\Bigl(\frac{1}{Y+1}-\mu\Bigr)^2\Bigr]=\E\Bigl[\Bigl(\frac{1}{Y+1}\Bigr)^2\Bigr]-\mu^2=\frac{1}{np}f_1(n)-\mu^2,\]
By Lemma \ref{lem:firstmom} and Remark \ref{rem:zni}
\[\E\Bigl[\Bigl(\frac{1}{Y+1}-\mu\Bigr)^2\Bigr]=\frac{1}{np}\cdot\frac{1}{np}\Big(1+\Omega\Bigl(\frac{1}{np}\Bigr)\Big)-\frac{1}{(np)^2}=\Omega\Big(\frac{1}{(np)^3}\Big).\]
We then apply Jensen inequality:
\begin{align*}
\E\Big[\Big(\frac{1}{Y+1}-\mu\Big)^{k}\Big]&\geq\EW{\Big(\frac{1}{Y+1}-\mu\Big)^{2}}^{k/2}=\Omega\Big(\frac{1}{(np)^3}\Big)^{k/2}\\
=\Omega\Big(\frac{1}{(np)^{1.5k}}\Big)
\end{align*}
This proves the claim.
\end{proof}

\end{appendices}

%
%
\end{document}